\documentclass[reqno,11pt]{amsart}

\textwidth16cm \textheight22.7cm \headheight12pt
\oddsidemargin.4cm \evensidemargin.4cm \topmargin0cm

\usepackage[colorlinks, linkcolor=blue, citecolor=red, urlcolor=cyan,pagebackref]{hyperref}

\allowdisplaybreaks

\usepackage{cleveref}

\usepackage[rgb]{xcolor} 
\definecolor{mygreen}{rgb}{0,0.7,0.3}
\definecolor{myblue}{rgb}{0,0.50,1.20}
\definecolor{myorange}{rgb}{1,0.5,0.1}

\definecolor{fillred}{rgb}{1,0.9,0.9}
\definecolor{fillgreen}{rgb}{0.9,1,0.9}

 \definecolor{refkey}{rgb}{0,0.7,0.3}
 \definecolor{labelkey}{rgb}{1,0,0}

\usepackage{url}

\usepackage{amsthm,amssymb,amscd}
\usepackage{mathrsfs}
\usepackage{bbm}
\usepackage{graphicx}
\usepackage{epic,eepic}
\usepackage{tikz-cd}
\usepackage{stmaryrd}

\usepackage{accents}

\usepackage{here}
\usepackage{bm}
\usepackage[all]{xy}

\usepackage{tikz}
\usetikzlibrary{calc,decorations.markings}
\usetikzlibrary{arrows.meta}
\usetikzlibrary{positioning}
\usetikzlibrary{cd, intersections, calc, decorations.pathmorphing, arrows, decorations.pathreplacing}

\usepackage{version}

\excludeversion{NB}
\excludeversion{NB2}
\numberwithin{equation}{section}

\newtheorem{thm}{Theorem}[section]
\newtheorem{cor}[thm]{Corollary}
\newtheorem{lem}[thm]{Lemma}
\newtheorem{prop}[thm]{Proposition}

\newtheorem{introthm}{Theorem}

\theoremstyle{definition}
\newtheorem{dfn}[thm]{Definition}
\newtheorem{ex}[thm]{Example}

\newtheorem{rem}[thm]{Remark}
\newtheorem{conv}[thm]{Notation}

\newtheorem{lemdef}[thm]{Lemma-Definition}

\crefname{thm}{Theorem}{Theorems}
\crefname{cor}{Corollary}{Corollaries}
\crefname{lem}{Lemma}{Lemmas}
\crefname{prop}{Proposition}{Propositions}
\crefname{dfn}{Definition}{Definitions}
\crefname{ex}{Example}{Examples}
\crefname{claim}{Claim}{Claims}
\crefname{conj}{Conjecture}{Conjectures}
\crefname{rem}{Remark}{Remarks}
\crefname{figure}{Figure}{Figures}
\crefname{section}{Section}{Sections}
\crefname{subsection}{Section}{Sections}
\crefname{appendix}{Appendix}{Appendices}
\crefname{assum}{Assumption}{Assumptions}
\crefname{conv}{Notation}{Notations}
\crefname{lemdef}{Lemma-Definition}{Lemma-Definitions}

\crefname{introthm}{Theorem}{Theorems}
\crefname{introcor}{Corollary}{Corollaries}
\crefname{introconj}{Conjecture}{Conjectures}


\def\C{{\mathbb C}}
\def\Z{{\mathbb Z}}
\def\ve{{\varepsilon}}
\def\P{{\mathcal{P} }}

\newcommand{\std}{\mathrm{std}}
\newcommand{\Hom}{\mathop{\mathrm{Hom}}\nolimits}
\newcommand{\Ker}{\mathop{\mathrm{Ker}}\nolimits}

\newcommand{\inn}{\mathrm{in}}
\newcommand{\out}{\mathrm{out}}

\newcommand\vw{{\overline{w_0}}}

\newcommand{\op}{\mathrm{op}}

\newcommand\A{{\mathcal{A} }}
\newcommand\B{{\mathcal{B} }}

\newcommand{\cO}{\mathcal{O}}
\newcommand{\cC}{\mathcal{C}}
\newcommand\bs{{\boldsymbol{s}}}

\newcommand{\bB}{\mathbb{B}}
\newcommand{\bJ}{\mathbf{J}}
\newcommand{\bM}{\mathbb{M}}
\newcommand{\bR}{\mathbb{R}}

\newcommand{\bG}{\mathbb{G}}

\newcommand{\sfT}{\mathsf{T}}

\newcommand{\clA}{A}
\newcommand{\flA}{\mathsf{A}}
\newcommand{\flB}{\mathsf{B}}
\newcommand{\bw}{{\boldsymbol{w}}}
\newcommand{\bv}{{\boldsymbol{v}}}

\def\L{{\mathcal{L}}}%
\newcommand\Conf{{\mathrm{Conf}}}%
\newcommand\Ad{{\mathrm{Ad}}}
\newcommand\fiber{{\boldsymbol{o}}}

\DeclareMathOperator{\rank}{\mathrm{rank}}

\newcommand\qarrow[2]{\draw[->,shorten >=2pt,shorten <=2pt,>=latex] (#1) -- (#2) [thick];} 
\newcommand\qsarrow[2]{\draw[->,shorten >=3pt,shorten <=3pt] (#1) -- (#2) [thick];} 
\newcommand\qsharrow[2]{\draw[->,shorten >=4pt,shorten <=2pt] (#1) -- (#2) [thick];} 
\newcommand\qstarrow[2]{\draw[->,shorten >=2pt,shorten <=4pt] (#1) -- (#2) [thick];} 

\newcommand\qshdarrow[2]{\draw[->,dashed,shorten >=4pt,shorten <=2pt] (#1) -- (#2) [thick];} 
\newcommand\qstdarrow[2]{\draw[->,dashed,shorten >=2pt,shorten <=4pt] (#1) -- (#2) [thick];} 

\newcommand\qdrarrow[2]{\draw[->,dashed,shorten >=2pt,shorten <=2pt,bend right=0.5cm] (#1) to (#2) [thick];} 
\newcommand\qdlarrow[2]{\draw[->,dashed,shorten >=2pt,shorten <=2pt,bend left=0.5cm] (#1) to (#2) [thick];} 

\tikzset{
  mid arrow/.style={postaction={decorate,decoration={
        markings,
        mark=at position .5 with {\arrow[#1]{stealth}}
      }}},
}

\tikzset{pics/.cd,
handle/.style={code={
\draw (-0.72,0) to[bend left] (0.72,0);
\draw (-0.9,0.1) to[bend right] (0.9,0.1);
}}}

\newcommand{\tangent}[3]{
				\draw[->] (#1) --++(#2:#3);
				}

\tikzset{->-/.style 2 args={
	postaction={decorate},
	decoration={markings, mark=at position #1 with {\arrow[thick, #2]{>}}} 
    },
    ->-/.default={0.5}{}
}
\tikzset{-<-/.style 2 args={
	postaction={decorate},
	decoration={markings, mark=at position #1 with {\arrow[thick, #2]{<}}} 
    },
    -<-/.default={0.5}{}
}

\newcommand{\quiverplus}[3]{
\begin{scope}[>=latex]
{\color{mygreen}
    \path(#1) coordinate(x1);
    \path(#2) coordinate(x2);
    \path(#3) coordinate(x3);
    \foreach \l in {1,2}
    {
        \draw($(x1)!0.333*\l!(x2)$) circle(2pt) coordinate(x12\l);
        \draw($(x2)!0.333*\l!(x3)$) circle(2pt) coordinate(x23\l);
        \draw($(x3)!0.333*\l!(x1)$) circle(2pt) coordinate(x31\l);
    }
    \path($(x1)!0.5!(x2)$) coordinate(H);
    \draw($(x3)!0.667!(H)$) circle(2pt) coordinate(G);
    \qarrow{x121}{G}
    \qarrow{x231}{G}
    \qarrow{x311}{G}
    \qarrow{G}{x122}
    \qarrow{G}{x232}
    \qarrow{G}{x312}
    \qarrow{x312}{x121}
    \qarrow{x122}{x231}
    \qarrow{x232}{x311}
}
\end{scope}
}

\newcommand{\quiversquareC}[4]{
\begin{scope}[>=latex]
    {\color{mygreen}
    \path(#1) coordinate(x1);
    \path(#2) coordinate(x2);
    \path(#3) coordinate(x3);
				\path(#4) coordinate(x4);
				\foreach \i in {1,2}
				{
    \path($(x1)!\i/3!(x4)$) coordinate(x14\i);
				\path($(x2)!\i/3!(x3)$) coordinate(x23\i);
				}
				\foreach \j in {0,1,2,3,4}
				{
				\draw($(x141)!\j/4!(x231)$) circle(2pt) coordinate(v1\j);
				\draw($(x142)!\j/4!(x232)$) coordinate(v2\j);
				\dnode{v2\j}{mygreen};
				}
				\draw[myblue]($(x1)!1/4!(x2)$) circle(2pt) coordinate(yl);
				\draw[myblue]($(x1)!3/4!(x2)$) circle(2pt) coordinate(yr);
				\draw($(x4)!1/4!(x3)$) coordinate(zl);
				\dnode{zl}{myblue};
				\draw($(x4)!3/4!(x3)$) coordinate(zr);
				\dnode{zr}{myblue};
				}
\end{scope}
}

\newcommand\dnode[2]{\draw[#2] (#1)circle(3pt) node[scale=0.6]{$2$}}

\newcommand{\smallsq}{\draw(0,0) -- (0,1) -- (1,1) -- (1,0) --cycle;\foreach \i in {0,1} \foreach \j in {0,1} \fill(\i,\j) circle(1.5pt);}

\newcommand{\CoG}[3]{
    \path(#1) coordinate(x1);
    \path(#2) coordinate(x2);
    \path(#3) coordinate(x3);
    \path($(x1)!0.5!(x2)$) coordinate(H);
    \path($(x3)!0.667!(H)$) circle(2pt) coordinate(G);}
\newcommand{\triv}[3]{
    \CoG{#1}{#2}{#3}
    \draw[red,thick,->-={0.7}{}] (#1) -- (G);
    \draw[red,thick,->-={0.7}{}] (#2) -- (G);
    \draw[red,thick,->-={0.7}{}] (#3) -- (G);
}   
\newcommand{\trivop}[3]{
    \CoG{#1}{#2}{#3}
    \draw[red,thick,-<-={0.7}{}] (#1) -- (G);
    \draw[red,thick,-<-={0.7}{}] (#2) -- (G);
    \draw[red,thick,-<-={0.7}{}] (#3) -- (G);
}   

\tikzset{
    wline/.style={
        shorten >=0.3pt,shorten <=0.3pt,line width=1.2pt, red!15, preaction={draw, shorten >=0.3pt,shorten <=0.3pt,line width=2.5pt, red}
    }
				}
				
\tikzset{
    wlinebdy/.style={
        shorten >=0.3pt,shorten <=0.3pt,line width=1.2pt, myblue!15, preaction={draw, shorten >=0.3pt,shorten <=0.3pt,line width=2.5pt, myblue}
    }
}

\tikzset{
	webline/.style={
		red, very thick
	}
}

\newcommand{\trivL}[3]{
    \CoG{#1}{#2}{#3}
    \draw[webline] (#1) -- (G);
    \draw[webline] (#2) -- (G);
    \draw[wline] (#3) -- (G);
}   
\newcommand{\trivR}[3]{
    \CoG{#1}{#2}{#3}
    \draw[webline] (#1) -- (G);
    \draw[wline] (#2) -- (G);
    \draw[webline] (#3) -- (G);
}   

\makeatletter
\newcommand{\oset}[3][0ex]{%
  \mathrel{\mathop{#3}\limits^{
    \vbox to#1{\kern-2\ex@
    \hbox{$\scriptstyle#2$}\vss}}}}
\makeatother

\pgfdeclarelayer{bg}    
\pgfsetlayers{bg,main}  



\begin{document}
\title[$\mathscr{A}=\mathscr{U}$ for cluster algebras from moduli spaces of $G$-local systems]
{$\mathscr{A}=\mathscr{U}$ for cluster algebras from moduli spaces of $G$-local systems}

\author[Tsukasa Ishibashi]{Tsukasa Ishibashi}
\address{Tsukasa Ishibashi, Mathematical Institute, Tohoku University, 6-3 Aoba, Aramaki, Aoba-ku, Sendai, Miyagi 980-8578, Japan.}
\email{tsukasa.ishibashi.a6@tohoku.ac.jp}

\author[Hironori Oya]{Hironori Oya}
\address{Hironori Oya, Department of Mathematics, Tokyo Institute of Technology, 2-12-1 Ookayama, Meguro-ku, Tokyo 152-8551, Japan.}
\email{hoya@math.titech.ac.jp}

\author[Linhui Shen]{Linhui Shen}
\address{Linhui Shen, Department of Mathematics, Michigan State University, 619 Red Cedar Road, 302 Wells Hall, East Lansing, Michigan 48824, United States}
\email{linhui@math.msu.edu}

\date{\today}

\begin{abstract}
For a finite-dimensional simple Lie algebra $\mathfrak{g}$ admitting a non-trivial minuscule representation and a connected marked surface $\Sigma$ with at least two marked points and no punctures, we prove that the cluster algebra $\mathscr{A}_{\mathfrak{g},\Sigma}$ associated with the pair $(\mathfrak{g},\Sigma)$ coincides with the upper cluster algebra $\mathscr{U}_{\mathfrak{g},\Sigma}$. The proof is based on the fact that the function ring $\cO(\A^\times_{G,\Sigma})$ of the moduli space of decorated twisted $G$-local systems on $\Sigma$ is generated by matrix coefficients of Wilson lines introduced in \cite{IO20}. As an application, we prove that the Muller-type skein algebras $\mathscr{S}_{\mathfrak{g}, \Sigma}[\partial^{-1}]$ \cite{Muller,IY21,IY_C2} for $\mathfrak{g}=\mathfrak{sl}_2, \mathfrak{sl}_3,$ or $\mathfrak{sp}_4$ are isomorphic to the cluster algebras $\mathscr{A}_{\mathfrak{g}, \Sigma}$.
\end{abstract}

\maketitle

\setcounter{tocdepth}{1}
\tableofcontents

\section{Introduction}\label{sec:introduction}
Cluster algebras $\mathscr{A}$, introduced by Fomin and Zelevinsky \cite{FZ02}, are a class of commutative algebras with distinguished generators called cluster variables.  The cluster variables are grouped into possibly infinitely many collections called \emph{clusters}, which are related by a particular type of transition maps called \emph{cluster transformations}. In \cite{BFZ}, Berenstein, Fomin, and Zelevinsky introduced the upper cluster algebras $\mathscr{U}$, defined as the intersections of Laurent polynomial rings associated with clusters. The upper cluster algebras $\mathscr{U}$ are more natural than $\mathscr{A}$ from the perspective of geometry. 
The Laurent phenomenon of cluster algebras implies that $\mathscr{A}\subset \mathscr{U}$, but in general $\mathscr{A}\neq \mathscr{U}$. 

The problem when $\mathscr{A}=\mathscr{U}$ is a mysterious but rather important question in cluster theory. For example, let $Q$ be a quiver with a non-degenerate potential $W$. Motivated by the representation theory of quivers with potential \cite{DWZ, CC}, the paper \cite{CLFS} introduces the Caldero-Chapoton algebra $\mathscr{CC}_{Q, W}$. When $Q$ has no loops or 2-cycles, we have 
\[\mathscr{A}_{Q}\subset \mathscr{CC}_{Q,W} \subset \mathscr{U}_Q,\] 
where $\mathscr{A}_Q$ and $\mathscr{U}_Q$ are the cluster algebra and the upper cluster algebra associated with $Q$, respectively. If $\mathscr{A}_Q=\mathscr{U}_Q$, then all the three aforementioned algebras are equal. As an application, after verifying a combinatorial condition on the existence of reddening mutation sequences of $Q$, the Caldero-Chapoton functions in $\mathscr{CC}_{Q,W}$ provide a natural linear basis on the cluster algebra $\mathscr{A}_Q$, called the generic basis \cite{Qin}. We refer to \cite{GLFS} for more details on the  significance of the $\mathscr{A}=\mathscr{U}$ problem in  the study of generic basis of cluster algebras.  We refer to \cite{GLS} for another application of $\mathscr{A}=\mathscr{U}$  on the quantization of cluster algebras.

\smallskip

In this paper, we investigate the  $\mathscr{A}=\mathscr{U}$ problem for cluster algebras from moduli spaces of $G$-local systems. Let $G$ be the simply-connected complex Lie group associated with a simple Lie algebra $\mathfrak{g}$. Let $\Sigma$ be a surface with punctures and marked points on its boundary. Fock and Goncharov \cite{FG03} introduced a moduli space
$\mathcal{A}_{G, \Sigma}$ of decorated $G$-local systems over $\Sigma$ as an algebro-geometric avatar of higher Teichm\"uller spaces. The moduli space $\mathcal{A}_{G, \Sigma}$ carries a natural cluster structure, constructed by Fock and Goncharov \cite{FG03} for $SL_n$, by Le \cite{Le} for other type of classical groups, and by Goncharov and the third named author \cite{GS19} for general groups. As a consequence, the cluster structure on $\mathcal{A}_{G, \Sigma}$ gives rise to a cluster algebra $\mathscr{A}_{\mathfrak{g}, \Sigma}$ and an upper cluster algebra $\mathscr{U}_{\mathfrak{g}, \Sigma}$ over the ground field $\mathbb{C}$.\footnote{In \cite{BMS}, Bucher, Machacek, and Shapiro show that the equality $\mathscr{A}=\mathscr{U}$ depends on the choice of ground ring. In this paper, we always choose the ground field $\mathbb{C}$. By an easy exercise of linear algebra, all the results of the paper can be generalized to $\mathbb{Q}$.}

Our first main result is as follows. 
\begin{introthm}\label{main theorem}    
For a finite-dimensional simple Lie algebra $\mathfrak{g}$ admitting a non-trivial minuscule representation 
(namely, not of type $E_8,F_4,G_2$)
and a connected marked surface $\Sigma$ with at least two marked points and no punctures, we have 
\begin{align*}
    \mathscr{A}_{\mathfrak{g},\Sigma}=\mathscr{U}_{\mathfrak{g},\Sigma}.
\end{align*}
\end{introthm}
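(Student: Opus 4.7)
The plan is to sandwich both $\mathscr{A}_{\mathfrak{g},\Sigma}$ and $\mathscr{U}_{\mathfrak{g},\Sigma}$ between each other through the function ring $\cO(\A^\times_{G,\Sigma})$. Since the Laurent phenomenon gives $\mathscr{A}_{\mathfrak{g},\Sigma}\subseteq \mathscr{U}_{\mathfrak{g},\Sigma}$, it suffices to prove the reverse inclusion. The strategy proceeds in three steps.

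First, I would identify $\mathscr{U}_{\mathfrak{g},\Sigma}$ with $\cO(\A^\times_{G,\Sigma})$. The Fock--Goncharov/Le/Goncharov--Shen cluster atlas embeds a cluster torus into $\A^\times_{G,\Sigma}$ for each seed, so the inclusion $\mathscr{U}_{\mathfrak{g},\Sigma}\hookrightarrow \cO(\A^\times_{G,\Sigma})$ is immediate from the definition of the upper cluster algebra as an intersection of Laurent polynomial rings. The opposite inclusion would follow from a codimension-two-complement argument in the spirit of Muller's starfish lemma, using smoothness/normality of the moduli space; the hypotheses (at least two marked points, no punctures) are exactly what is needed to get a cluster chart at each boundary segment and to avoid the pathologies of twisting at interior punctures.

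Next, I would invoke the generation theorem of \cite{IO20}, which identifies $\cO(\A^\times_{G,\Sigma})$ as the $\mathbb{C}$-algebra generated by matrix coefficients of Wilson lines $g_{[c]}$ attached to homotopy classes of paths $[c]$ between marked points, taken in finite-dimensional representations of $G$. Under the minuscule hypothesis it suffices to consider matrix coefficients in the fundamental minuscule representations of $G$. Combined with the previous step, this reduces the theorem to the claim that every such minuscule Wilson-line matrix coefficient lies in $\mathscr{A}_{\mathfrak{g},\Sigma}$.

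The third and decisive step is to carry out this verification. Fix an ideal triangulation $T$ of $\Sigma$; by multiplicativity of Wilson lines under path concatenation, one reduces to analyzing the transport across each elementary triangle, and then multiplying the per-triangle matrices along the path. In a minuscule representation the weight spaces are all one-dimensional, so the entries of the per-triangle transport factors can be written in terms of generalized minors, which in the Fock--Goncharov setup are honest cluster variables on that triangle. Multiplying across the full path then produces an expression in cluster variables drawn from seeds associated to $T$ and its mutations, which one needs to prove is \emph{polynomial} in these variables and hence lies in $\mathscr{A}_{\mathfrak{g},\Sigma}$. The main obstacle is precisely this polynomiality: Laurent polynomiality on each seed chart is automatic from regularity on $\A^\times_{G,\Sigma}$, but excluding genuine negative exponents on unfrozen cluster variables is delicate. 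The minuscule hypothesis is what makes it feasible — for minuscule weights there are no sums over distinct weight vectors of the same weight, and it is exactly such sums that would generate poles along mutation divisors and prevent the approach from working in the excluded types $E_8$, $F_4$, and $G_2$.
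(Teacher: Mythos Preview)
Your first two steps align with the paper: the identification $\mathscr{U}_{\mathfrak{g},\Sigma}=\cO(\A^\times_{G,\Sigma})$ is \cref{co=cl}, and the generation of $\cO(\A^\times_{G,\Sigma})$ by Wilson-line matrix coefficients is \cref{cor:generation_by_Wilson_lines}.

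The gap is in your third step. If you decompose an arc triangle-by-triangle, the entries of each per-triangle transport factor are generalized minors divided by the $h$-invariants on the \emph{edges} of that triangle (cf.\ \cref{ex:triangle_case}). But the interior edges of your triangulation carry \emph{unfrozen} cluster variables on $\Sigma$, so each per-triangle factor lies in $\mathscr{A}_{\mathfrak{g},\Sigma}$ only after inverting those unfrozen variables. You acknowledge that controlling the resulting denominators when you multiply along the path is ``the main obstacle'' and ``delicate,'' but you do not resolve it; regularity of the full product on $\A^\times_{G,\Sigma}$ only places it in $\mathscr{U}_{\mathfrak{g},\Sigma}$, which is precisely what you are trying to show equals $\mathscr{A}_{\mathfrak{g},\Sigma}$.

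The paper sidesteps this with a different decomposition. \cref{lem:groupoid_generator}---and this is where the hypothesis of at least two marked points is actually used, not for the existence of charts as you suggest---shows that the boundary-fundamental groupoid is generated by \emph{simple} framed arc classes (standard lifts of embedded arcs between \emph{distinct} boundary intervals) together with the half-fiber classes $\sqrt{\fiber_E}^{\inn},\sqrt{\fiber_E}^{\out}$. A simple arc has a band neighborhood $B_c$ which is a single quadrilateral whose two vertical sides are the boundary intervals $E_1,E_2\subset\partial\Sigma$. On $\A^\times_{G,B_c}\cong\Conf_4^\times\A_G$, \cref{prop:simple_minors} computes each generalized minor of $g_{[c]}$ as a \emph{single} Goncharov--Shen cluster variable divided only by the genuinely frozen $h$-invariants on $E_1$ and $E_2$. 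There is no multiplication along a path and hence no polynomiality obstacle to overcome. The minuscule hypothesis enters only through \cref{prop:minors}, to ensure that every matrix coefficient in a faithful representation is already a generalized minor and thus covered by \cref{prop:simple_minors}; it is not used to prevent poles along mutation divisors in the way you describe.
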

We prove Theorem \ref{main theorem} in Section \ref{section4}.
Our proof is based on the following geometric considerations on the moduli space $\A_{G,\Sigma}$:
\begin{enumerate}
    \item We first show that the function ring $\cO(\A_{G,\Sigma}^\times)$ is generated by matrix coefficients of \emph{Wilson lines}, which are originally introduced in \cite{IO20} on the closely related moduli space $\P_{G',\Sigma}$ \cite{GS19} for adjoint groups $G'$. In this paper, we introduce a ``lifted'' version of Wilson lines defined on $\A_{G,\Sigma}^\times$, whose values are in the simply-connected group $G$.
    \item Then we show that the upper cluster algebra $\mathscr{U}_{\mathfrak{g},\Sigma}$ coincides with the function ring $\cO(\A_{G,\Sigma}^\times)$ over $\mathbb{C}$ by a covering argument up to codimension 2, similarly to the proof of \cite[Theorem 1.1]{S21}. Therefore we obtain a geometric generating set of the upper cluster algebra provided by the Wilson lines.
    \item Finally, we show that the generalized minors of certain \emph{simple} Wilson lines are single cluster variables, multiplied by several frozen variables (\cref{prop:simple_minors}). Under the assumption of \cref{main theorem}, these are enough to generate $\cO(\A_{G,\Sigma}^\times)=\mathscr{U}_{\mathfrak{g},\Sigma}$ and thus we get the desired inclusion  $\mathscr{U}_{\mathfrak{g},\Sigma} \subset \mathscr{A}_{\mathfrak{g},\Sigma}$.
\end{enumerate}
We remark here that \cref{prop:simple_minors} implies that the generalized minors of simple Wilson lines are contained in the \emph{theta basis} \cite{GHKK}, and hence they are universally Laurent polynomials with positive integral coefficients. This strengthens the positivity result in \cite{IO20} for this particular class of Wilson lines and matrix coefficients.

We include a list of results preceding us:
\begin{itemize}
    \item 
Muller \cite{Mul13} proved $\mathscr{A}=\mathscr{U}$ for locally acyclic cluster algebras. When $\mathfrak{g}=\mathfrak{sl}_2$ and $\Sigma$ is unpunctured and contains at least two marked points, the cluster algebra $\mathscr{A}_{\mathfrak{sl}_2, \Sigma}$ is locally acyclic, and hence $\mathscr{A}_{\mathfrak{sl}_2, \Sigma}=\mathscr{U}_{\mathfrak{sl}_2, \Sigma}$.
\item
Canakci, Lee, and Schiffler \cite{CLR} prove $\mathscr{A}_{\mathfrak{sl}_2, \Sigma}=\mathscr{U}_{\mathfrak{sl}_2, \Sigma}$, where $\Sigma$ is an unpunctured surface with one marked point. We expect that the same result can be generalized to arbitrary $\mathfrak{g}$.

\item Goodearl and Yakimov \cite{GY} prove a quantum analog of $\mathscr{A}=\mathscr{U}$ for cluster algebras associated with double Bruhat cells.
We leave it for a future project to achieve a quantum analog of Theorem \ref{main theorem}.

\item Shen and Weng \cite{SW21} prove $\mathscr{A}=\mathscr{U}$ for cluster algebras associated with double Bott-Samelson cells. Examples of double Bott-Samelson cells include all the double Bruhat cells and the augmentation varieties associated with positive-braid Legendrian links. Our Theorem  \ref{main theorem} can be viewed as a generalization of the result of \cite{SW21} from disks to surfaces.

\item In the other direction, Berenstein, Fomin and Zelevinsky \cite{BFZ} prove $\mathscr{A}_{\mathfrak{sl}_2, \Sigma}\neq \mathscr{U}_{\mathfrak{sl}_2, \Sigma}$ when $\Sigma$ is a closed torus with exactly one puncture. Ladkani \cite{Lad13} extended this result for closed surfaces of genus $g \geq 1$ with exactly one puncture. 
A very recent work of Moon and Wong \cite{MW22} proves $\mathscr{A}_{\mathfrak{sl}_2, \Sigma}\neq \mathscr{U}_{\mathfrak{sl}_2, \Sigma}$ when $\Sigma$ is a closed torus with $n \geq 1$ many punctures. They also conjecture $\mathscr{A}_{\mathfrak{sl}_2, \Sigma}\neq \mathscr{U}_{\mathfrak{sl}_2, \Sigma}$ when $\Sigma$ is a closed surface of genus $g \geq 1$ with $n \geq 1$ many punctures. Therefore our assumption on the absence of punctures is crucial.
\end{itemize}

\smallskip

Closely related to the moduli space $\mathcal{A}_{G, \Sigma}$ are the \emph{(stated) skein algebras}, which are generated by combinatorial objects called $\mathfrak{g}$-webs modulo a collection of explicit graphical relations. For $\mathfrak{g}=\mathfrak{sl}_2$, the connections between such skein algebras and the cluster algebra $\mathscr{A}_{\mathfrak{sl}_2, \Sigma}$ have been broadly studied in the literature (cf. \cite{BW,Muller,CL19}). More recently, the study of the relations of skein and cluster algebras and their web bases has been extended to $\mathfrak{sl}_3$ (cf.~\cite{DS20a, DS20b, K20, IY21}). The $\mathfrak{sp}_4$-case is also studied in \cite{IY_C2}.

Following the notation of \cite{IY21,IY_C2}, we consider the boundary-localized skein algebra $\mathscr{S}^q_{\mathfrak{g}, \Sigma}[\partial^{-1}]$ for $\mathfrak{g}=\mathfrak{sl}_2,\mathfrak{sl}_3,\mathfrak{sp}_4$. 
Let $\mathscr{S}_{\mathfrak{g}, \Sigma}[\partial^{-1}]$ be its classical specialization $q=1 \in \C$, which is a $\C$-algebra. As an application of Theorem \ref{main theorem}, we prove

\begin{introthm}
\label{main thm 2}
If $\mathfrak{g}=\mathfrak{sl}_2$, $\mathfrak{sl}_3$ or $\mathfrak{sp}_4$ and $\Sigma$ is an unpunctured surface with at least two marked points, then the skein algebra $\mathscr{S}_{\mathfrak{g}, \Sigma}[\partial^{-1}] $ is isomorphic to the cluster algebra $\mathscr{A}_{\mathfrak{g}, \Sigma}$.
\end{introthm}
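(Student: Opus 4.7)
The plan is to deduce Theorem \ref{main thm 2} as a direct corollary of Theorem \ref{main theorem} by combining it with existing web/skein-theoretic results for the three low-rank types in question. The overall strategy is a sandwich argument
\[
\mathscr{A}_{\mathfrak{g},\Sigma}\ \subseteq\ \mathscr{S}_{\mathfrak{g},\Sigma}[\partial^{-1}]\ \subseteq\ \mathscr{U}_{\mathfrak{g},\Sigma},
\]
after which Theorem \ref{main theorem} collapses the two outer terms and forces the middle term to coincide with them.

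More concretely, I would first recall from \cite{Muller} (for $\mathfrak{sl}_2$), \cite{IY21} (for $\mathfrak{sl}_3$), and \cite{IY_C2} (for $\mathfrak{sp}_4$) the construction of an injective algebra homomorphism
\[
\Phi\colon \mathscr{S}_{\mathfrak{g},\Sigma}[\partial^{-1}]\ \hookrightarrow\ \mathscr{U}_{\mathfrak{g},\Sigma}
\]
obtained by evaluating a $\mathfrak{g}$-web as a regular function on $\mathcal{A}_{G,\Sigma}^\times$ (the classical limit of the skein is naturally a subring of $\cO(\mathcal{A}_{G,\Sigma}^\times)$, which by the covering argument recalled in step (2) of the introduction equals $\mathscr{U}_{\mathfrak{g},\Sigma}$). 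Next, I would cite from the same references the realization of each cluster variable of $\mathscr{A}_{\mathfrak{g},\Sigma}$ as (the image under $\Phi$ of) an explicit elementary web, possibly after inverting boundary loops/arcs; this yields the inclusion $\mathscr{A}_{\mathfrak{g},\Sigma}\subseteq \Phi(\mathscr{S}_{\mathfrak{g},\Sigma}[\partial^{-1}])$.

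Now I invoke Theorem \ref{main theorem}: since $\mathfrak{sl}_2,\mathfrak{sl}_3,\mathfrak{sp}_4$ all admit a non-trivial minuscule representation (the defining representation in each case) and $\Sigma$ is connected, unpunctured, and has at least two marked points, we get $\mathscr{A}_{\mathfrak{g},\Sigma}=\mathscr{U}_{\mathfrak{g},\Sigma}$. Substituting this into the sandwich forces
\[
\Phi(\mathscr{S}_{\mathfrak{g},\Sigma}[\partial^{-1}])=\mathscr{A}_{\mathfrak{g},\Sigma}=\mathscr{U}_{\mathfrak{g},\Sigma},
\]
and injectivity of $\Phi$ upgrades the first equality to an isomorphism $\mathscr{S}_{\mathfrak{g},\Sigma}[\partial^{-1}]\cong \mathscr{A}_{\mathfrak{g},\Sigma}$, as desired.

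The main obstacle is not computational but bibliographic and set-up-level: one has to verify that the maps constructed in \cite{Muller}, \cite{IY21}, \cite{IY_C2} genuinely furnish both the embedding $\Phi$ and the containment $\mathscr{A}\subseteq\Phi(\mathscr{S}[\partial^{-1}])$ in a uniform way, in particular that the boundary-localization and the normalization of frozen variables in those works match the cluster structure on $\mathcal{A}_{G,\Sigma}$ used in the present paper. For $\mathfrak{sl}_2$ this is Muller's theorem essentially verbatim; for $\mathfrak{sl}_3$ and $\mathfrak{sp}_4$ a brief dictionary between the elementary webs of \cite{IY21,IY_C2} and the cluster variables (and frozen variables) on $\mathcal{A}_{G,\Sigma}$ should suffice, after which Theorem \ref{main theorem} does all of the remaining work.
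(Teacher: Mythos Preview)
Your sandwich has the inclusions oriented the wrong way relative to what the cited references actually prove. According to the paper (see the discussion after Theorem~\ref{main thm 2} in the introduction and in Section~\ref{section 5}), what \cite{IY21,IY_C2} establish is
\[
\mathscr{S}_{\mathfrak{g},\Sigma}[\partial^{-1}] \subseteq \mathscr{A}_{\mathfrak{g},\Sigma},
\]
i.e.\ every (boundary-localized) web lies in the cluster algebra. They do \emph{not} show that every cluster variable is a web; indeed, if they did, the equality $\mathscr{S}[\partial^{-1}]=\mathscr{A}$ would already follow from those references alone without any need for Theorem~\ref{main theorem}. So your step ``cite from the same references the realization of each cluster variable \dots\ as an explicit elementary web, yielding $\mathscr{A}\subseteq\Phi(\mathscr{S}[\partial^{-1}])$'' is precisely the inclusion that is \emph{not} available a priori and that the present paper must supply.

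The paper closes this gap not by exhibiting all cluster variables as webs, but by showing that the \emph{generators} of $\mathscr{U}_{\mathfrak{g},\Sigma}=\cO(\A_{G,\Sigma}^\times)$ lie in the skein algebra: it uses \cref{cor:generation_UCA_Wilson_lines} (generation by matrix coefficients of simple and boundary Wilson lines) together with the explicit web pictures for the Wilson line matrices (the displayed $2\times2$, $3\times3$, $4\times4$ matrices in Section~\ref{section 5}) to obtain $\mathscr{U}_{\mathfrak{g},\Sigma}\subseteq \mathscr{S}_{\mathfrak{g},\Sigma}[\partial^{-1}]$. Combined with the known inclusion $\mathscr{S}[\partial^{-1}]\subseteq\mathscr{A}$ and the Laurent inclusion $\mathscr{A}\subseteq\mathscr{U}$, this forces all three to coincide (and in particular recovers Theorem~\ref{main theorem} for these types as a byproduct). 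Your proposal omits exactly this computation --- identifying the Wilson line matrix entries with explicit webs --- which is the substantive new content of Section~\ref{section 5}.
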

We prove Theorem \ref{main thm 2} in Section \ref{section 5}.
The $\mathfrak{sl}_2$-case is exactly the result obtained by \cite{Muller}. 
For $\mathfrak{g}=\mathfrak{sl}_3$ or $\mathfrak{sp}_4$, the inclusion $\mathscr{S}_{\mathfrak{g}, \Sigma}[\partial^{-1}] \subset \mathscr{A}_{\mathfrak{g}, \Sigma}$ is proved in \cite{IY21,IY_C2}. We further verify the inclusion 
$\cO(\A_{G,\Sigma}^\times) \subset \mathscr{S}_{\mathfrak{g}, \Sigma}[\partial^{-1}]$ 
by verifying that the matrix entries of simple Wilson lines can be written as explicit $\mathfrak{g}$-webs, similarly to the step (3) in the proof of \cref{main theorem}. Then the equality $\mathscr{A}_{\mathfrak{g}, \Sigma}=\cO(\A_{G,\Sigma}^\times)$ over $\mathbb{C}$ allows us to combine these inclusions to get \cref{main thm 2}. 
Theorem \ref{main thm 2} confirms a conjecture of the first named author and Yuasa in the classical setting (\cite[Conjecture 3]{IY21}). 

\subsection*{\textbf{Acknowledgments}}
T. I. is grateful to Wataru Yuasa for a valuable discussion on relations to skein algebras, and giving careful comments on a draft of this paper. The authors are grateful to the anonymous referees for the careful reading and the several comments that significantly improve our paper. 
T. I. is partially supported by JSPS KAKENHI Grant-in-Aid for Research Activity Start-up (No.~20K22304).
H. O. is supported by JSPS KAKENHI Grant-in-Aid for Early-Career Scientists (No.~19K14515).
L. S. is supported by the Collaboration Grants for Mathematicians from the Simons Foundation ($\#$~711926) and the NSF grant DMS-2200738.

\section{The function ring \texorpdfstring{$\cO(G)$}{O(G)}}\label{sec:O(G)}

\subsection{Notations from Lie theory}

In this section, we briefly recall basic terminologies in Lie theory. We refer the reader to, for example,  \cite{Jan} for missing definitions. 

Let $G$ be a simply-connected connected simple algebraic group over $\mathbb{C}$. Let $B^+$ and $H$ be a Borel subgroup and a maximal torus contained in $B^+$, respectively. Let $U^+$ be the unipotent radical of $B^+$. Let 
\begin{itemize}
    \item $X^*(H)=\Hom(H,\mathbb{G}_m)$ be the weight lattice, $X_*(H)=\Hom(\mathbb{G}_m, H)$ the coweight lattice, and $\langle -,-\rangle$ the natural pairing 
\[
\langle -,-\rangle\colon~ X_*(H)\times X^*(H)\longrightarrow \Hom(\mathbb{G}_m, \mathbb{G}_m)\simeq \Z; 
\]
    \item $\Phi\subset X^*(H)$ the root system of $(G, H)$; 
    \item $\Phi_+\subset \Phi$ the set of positive roots consisting of the $H$-weights of the Lie algebra of $U^+$;
    \item $\{\alpha_s\mid s\in S\}\subset \Phi_+$ the set of simple roots, where $S$ is the index set with $|S|=r$;
    \item $\{\alpha_s^\vee \mid s \in S\}$ the set of simple coroots.
\end{itemize}
For $s\in S$, let $\varpi_s\in X^*(H)$ be the $s$-th fundamental weight  such that $\langle \alpha_t^{\vee}, \varpi_s\rangle=\delta_{st}$. In other words, we have $\alpha_t=\sum_{u \in S} C_{ut} \varpi_u$ for $t \in S$, where $C_{st}:=\langle \alpha_s^\vee, \alpha_t\rangle\in \mathbb{Z}$. 
We have \[X^*(H)=\sum_{s\in S}\mathbb{Z}\varpi_s.\] 
The sub-lattice 
generated by $\alpha_s$ for $s\in S$ is  called the \emph{root lattice}.

For $h\in H$ and $\mu\in X^*(H)$, the evaluation of $\mu$ at $h$ is denoted by $h^{\mu}$. For $s\in S$, we have a pair of root homomorphisms $x_{s}, y_{s}\colon \mathbb{G}_a\to G$ such that 
\[
hx_{s}(t)h^{-1}=x_{s}(h^{\alpha_s}t), \qquad hy_s(t)h^{-1}=y_s(h^{-\alpha_s}t).
\]
After a suitable normalization, we obtain a homomorphism 
$\varphi_{s}\colon SL_2\to G$
such that 
\[
\varphi_{s}\left( \begin{pmatrix}
    1&a\\
    0&1
\end{pmatrix}\right)=x_{s}(a), \qquad
\varphi_{s} \left(\begin{pmatrix}
1&0\\
a&1
\end{pmatrix}\right)= y_s(a), \qquad
\varphi_{s} \left(\begin{pmatrix}
a&0\\
0&a^{-1}
\end{pmatrix}\right)= \alpha_s^\vee(a).
\]

\smallskip

\paragraph{\textbf{Weyl groups}}
Let $W(G):=N_{G}(H)/H$ denote the Weyl group of $G$, where $N_G(H)$ is the normalizer subgroup of $H$ in $G$.  
For $s \in S$, we set \[\overline{r}_{s}:=\varphi_s\left(
\begin{pmatrix}
    0&-1\\
    1&0
\end{pmatrix}
\right) \in N_{G}(H).\] 
The elements $r_s:=\overline{r}_s H \in W(G)$ have order $2$, and give rise to a Coxeter generating set for $W(G)$ with the following presentation:
\begin{align*}
    W(G)= \langle  r_s\ (s \in S) \mid r_s^2=1, ~~ (r_sr_t)^{m_{st}}=1\ (s,t \in S) \rangle,
\end{align*}
where $m_{st} \in \Z$ is given by the following table
\[
\begin{tabular}{rccccc}
$C_{st}C_{ts}:$ & $0$ & $1$ & $2$ & $3$  \\
$m_{st}:$         & $2$ & $3$ & $4$ & $6$ 
\end{tabular}.
\]
There is a left action of $W(G)$ on $X^*(H)$ induced from the (right) conjugation action of $N_{G}(H)$ on $H$. 
The action of each reflection $r_s$ is given by
\begin{align*}
    r_s.\mu=\mu - \langle \alpha_s^\vee, \mu \rangle \alpha_s, \qquad \forall s\in S, \quad \forall \mu \in X^*(H).
\end{align*}

For $w\in W(G)$, a sequence $\bs=(s_1,\dots,s_\ell)$ of elements of $S$ is called a reduced word of $w$ if $w=r_{s_1}\dots r_{s_\ell}$ and $l$ is the smallest among all the sequences with this property. For a reduced word $\bs=(s_1,\dots,s_\ell)$ of $w\in W(G)$, the number $l(w):=\ell$ is called the length of $w$, and set  $\overline{w}:=\overline{r}_{s_1}\dots\overline{r}_{s_\ell} \in N_G(H)$. Then it turns out that $\overline{w}$ does not depend on the choice of the reduced word. 

Let $w_0\in W(G)$ be the longest element of $W(G)$, and set $s_G:=\overline{w_0}^2 \in N_G(H)$. It turns out that $s_G\in Z(G)$, and $s_G^2=1$ (cf.~\cite[\S 2]{FG03}). We define an involution $S\to S, s\mapsto s^{\ast}$ by 
\begin{align*}
\alpha_{s^{\ast}}=-w_0.\alpha_s. 
\end{align*}

\smallskip

\paragraph{\textbf{The Dynkin involution.}} There exists an anti-involution $\sfT:G\to G, g\mapsto g^\mathsf{T}$ of the algebraic group $G$ given by $\mathsf{T}\circ x_s=y_s$ and $h^\mathsf{T}=h$ for $s\in S$, $h\in H$. This is called the \emph{transpose} in $G$. 
	Let $\ast: G\to G, g\mapsto g^{\ast}$ be a group automorphism defined by 
	\begin{align*}
	g\mapsto \overline{w}_0(g^{-1})^{\mathsf{T}}\overline{w}_0^{-1}.
	\end{align*}
Then $(g^{\ast})^{\ast}=g$ for all $g\in G$. This is called the \emph{Dynkin involution} on $G$ (cf. \cite[(2)]{GS18}).

\smallskip

\paragraph{\textbf{Irreducible modules and matrix coefficients}}
Let $V$ be a finite dimensional representation of $G$ (over $\mathbb{C}$).
For $f\in V^{\ast}$ and $v\in V$, we define the element $c_{f, v}^V\in \cO(G)$ by  
\begin{align}
g\mapsto \langle f, g.v\rangle, \qquad \forall g\in G\label{eq:mat_coeff}
\end{align}
An element of this form is called a \emph{matrix coefficient}. 

Set $X^*(H)_+:=\sum_{s\in S}\mathbb{Z}_{\geq 0}\varpi_s\subset X^*(H)$. 
For $\lambda\in X^*(H)_+$, let $V(\lambda)$ be the irreducible representation of $G$ of highest weight $\lambda$. A fixed highest weight vector of $V(\lambda)$ is denoted by $v_{\lambda}$. For $\lambda^* = -w_0.\lambda$, there is a natural non-degenerate pairing
\[
\langle-,-\rangle:~ V(\lambda^*)\times V(\lambda) \longrightarrow \mathbb{C},
\] which identify $V(\lambda^*)$  with the dual vector space of $V(\lambda)$. We further fix a lowest weight vector of $V(\lambda^*)$, denoted by $f_{\lambda^*}$, such that $\langle f_{\lambda^*}, v_\lambda\rangle =1$. 

For $\lambda\in X^*(H)_+$ 
and  $w,w'\in W(G)$, the matrix coefficient
\begin{align}
\Delta_{w\lambda,w'\lambda}(g):=\langle \overline{w}.f_{\lambda^*},g\overline{w}'.v_{\lambda}\rangle   \label{eq:minor}
\end{align}
is called a \emph{generalized minor}.

\subsection{The generators of $\cO(G)$}
\begin{prop}\label{prop:generate}
Let $G$ be a semisimple algebraic group over $\C$, and $\rho\colon G\to GL(V)$ a faithful rational representation. Then the ring of regular functions $\cO(G)$ is generated by the matrix coefficients of $V$. 
\end{prop}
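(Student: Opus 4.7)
The plan is to reduce the problem to the standard presentation of $\cO(GL(V))$ by using $\rho$ to realize $G$ as a closed subgroup of $GL(V)$, and then observe that the only generator of $\cO(GL(V))$ other than matrix entries, namely $1/\det$, becomes trivial upon restriction because $G$ is semisimple.

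First I would verify that $\rho\colon G\to GL(V)$ is a closed immersion of algebraic groups. Since $G$ is reductive (being semisimple over $\C$) and $\rho$ is faithful, the image $\rho(G)$ is a closed subgroup of $GL(V)$, and the bijective homomorphism $G\to \rho(G)$ of smooth connected algebraic groups in characteristic zero is an isomorphism. This is a standard result in the theory of linear algebraic groups, and in particular gives a surjection $\cO(GL(V))\twoheadrightarrow \cO(G)$.

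Next, choose a basis $\{v_1,\dots,v_n\}$ of $V$ with dual basis $\{f_1,\dots,f_n\}\subset V^{\ast}$. Then $\cO(GL(V))=\C[x_{ij}\mid 1\le i,j\le n][1/\det]$, where $x_{ij}(g):=f_j(g.v_i)=c^V_{f_j,v_i}(g)$; in other words, the polynomial generators $x_{ij}$ of $\cO(GL(V))$ are literally matrix coefficients of $V$ in the sense of \eqref{eq:mat_coeff}.

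The main (and only nontrivial) point is to handle the generator $1/\det$. The restriction $\det|_G$ is a character $G\to \mathbb{G}_m$, but since $G$ is semisimple we have $G=[G,G]$, so every character of $G$ is trivial. Hence $\det|_G\equiv 1$, and therefore $(1/\det)|_G\equiv 1$ as well. Combining this with the surjectivity of the restriction map from Step~1 and the explicit description of the $x_{ij}$, we conclude that $\cO(G)$ is generated by the restrictions $c^V_{f_j,v_i}|_G$, which are matrix coefficients of $V$. I do not foresee any serious obstacle: each step is standard, and the only care needed is in citing that a faithful rational representation of a semisimple group in characteristic zero is a closed immersion.
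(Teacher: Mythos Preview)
Your proof is correct and follows essentially the same approach as the paper's. The only cosmetic difference is the order of the two key observations: the paper first notes that $\det\circ\rho$ is trivial (so $\rho(G)\subset SL(V)$) and then embeds $SL(V)$, hence $\rho(G)$, as a closed subvariety of affine space $\C^{(\dim V)^2}$; you instead first embed $G$ as a closed subgroup of $GL(V)$, get a surjection from $\cO(GL(V))=\C[x_{ij}][1/\det]$, and then kill $1/\det$ using the same triviality of characters.
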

\begin{proof}
Consider the composition map $\det \circ \rho \colon G\to \C^{\ast}$. Then it gives a one-dimensional rational representation of $G$, and it must be trivial since $G$ is semisimple. Therefore, $\rho(G)\subset \Ker \det=SL(V)$. Hence, by \cite[Proposition 2.2.5]{Spr}, $\rho(G)$ is a Zariski closed subgroup of $SL(V)$. 
Here note that $G$ is isomorphic to $\rho(G)$ as an algebraic group since $\rho$ is faithful. 
 By fixing a basis of $V$, $SL(V)$ is regarded as a Zariski closed set of $\C^{(\dim V)^2}$. Hence, $\rho(G)$ is also considered as a Zariski closed set of $\C^{(\dim V)^2}$, and there exists a surjective algebra homomorphism $R\colon \cO(\C^{(\dim V)^2})=\C[X_{ij}| 1\leq i, j\leq \dim V]\twoheadrightarrow \cO(\rho(G))\simeq \cO(G)$ corresponding to the restriction of the domain. By construction, each $R(X_{ij})$ ($1\leq i, j\leq \dim V$) is actually a matrix coefficient of $V$, hence the claim follows. 
\end{proof}

\begin{prop}\label{prop:minors} 
Let $G$ be a simply-connected simple algebraic group $G$ over $\C$ of type $A_n$, $B_n$, $C_n$, $D_n$, $E_6$, or $E_7$. Then $\cO(G)$ is generated by generalized minors. 
\end{prop}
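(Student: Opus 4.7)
The plan is to exploit the classification fact that $A_n, B_n, C_n, D_n, E_6, E_7$ are precisely the simple types admitting a nontrivial minuscule fundamental representation, and to combine this with \cref{prop:generate}. The guiding principle is that for a minuscule dominant weight $\lambda$, every weight space of $V(\lambda)$ is one-dimensional and the weights form a single $W(G)$-orbit; consequently a weight basis of $V(\lambda)$ is given by $\{\overline{w_i}.v_\lambda\}$ for coset representatives $\{w_i\}$ of $W(G)/W_\lambda$, and matrix coefficients between such weight vectors are nonzero scalar multiples of generalized minors.

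First I would choose a faithful rational representation $V$ of $G$ that decomposes as a direct sum of minuscule representations. For types $A_n, B_n, C_n, E_6, E_7$, a single minuscule fundamental representation (the standard, spin, standard, $27$-dimensional, and $56$-dimensional representations, respectively) already acts faithfully on the center of $G$ and hence on $G$. For type $D_n$, the vector representation $V(\varpi_1)$ and the two half-spin representations $V(\varpi_{n-1}), V(\varpi_n)$ are each minuscule, and their central characters jointly separate the center of $\mathrm{Spin}(2n)$; their direct sum is therefore faithful and all irreducible constituents are minuscule.

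Next, by \cref{prop:generate}, $\cO(G)$ is generated by matrix coefficients $c_{f,v}^V$. Bilinearity in $(f,v)$ together with the decomposition $V=\bigoplus_i V(\lambda_i)$ reduces the task to showing that, for a single minuscule $V(\lambda)$, every matrix coefficient $c_{f,v}^{V(\lambda)}$ lies in the $\C$-span of generalized minors.

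For this last step, fix coset representatives $\{w_i\}$ of $W(G)/W_\lambda$, where $W_\lambda$ is the stabilizer of $\lambda$, and set $v_i:=\overline{w_i}.v_\lambda$ and $f_i:=\overline{w_i}.f_{\lambda^\ast}$. By the minuscule property, $\{v_i\}$ is a weight basis of $V(\lambda)$, while each $f_i$ is a nonzero element of the one-dimensional weight space of $V(\lambda)^{\ast}\simeq V(\lambda^\ast)$ of weight $-w_i.\lambda$. Hence $\langle f_i, v_j\rangle$ vanishes unless $w_i.\lambda = w_j.\lambda$, i.e.\ unless $i=j$, and on the diagonal yields a nonzero scalar $c_i\in \C^\times$, so $\{c_i^{-1}f_i\}$ is the dual basis of $\{v_j\}$. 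Expanding $v=\sum_j a_j v_j$ and $f=\sum_i b_i\, c_i^{-1} f_i$ then gives
\begin{equation*}
c_{f,v}^{V(\lambda)}(g)\;=\;\sum_{i,j} b_i\, a_j\, c_i^{-1}\,\langle \overline{w_i}.f_{\lambda^\ast},\, g\,\overline{w_j}.v_\lambda\rangle\;=\;\sum_{i,j} b_i\, a_j\, c_i^{-1}\,\Delta_{w_i\lambda,\, w_j\lambda}(g),
\end{equation*}
by the definition \eqref{eq:minor} of generalized minors, completing the proof. I do not anticipate a serious obstacle; the only nontrivial bookkeeping is the faithfulness check for the $D_n$ direct sum, which is a standard computation with central characters.
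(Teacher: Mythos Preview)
Your proof is correct and follows essentially the same approach as the paper: reduce to \cref{prop:generate} and exhibit a faithful direct sum of minuscule representations for each type. You are in fact more explicit than the paper, which simply asserts that it suffices to find such a faithful sum without spelling out (as you do) why matrix coefficients of a minuscule $V(\lambda)$ lie in the $\C$-span of generalized minors; your weight-basis argument fills this gap cleanly, and your slight over-specification in type $D_n$ (including $V(\varpi_1)$ alongside the two half-spins, whereas the paper uses only the latter pair) is harmless.
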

\begin{proof}
By \cref{prop:generate}, it suffices to show that there exist minuscule representations $V_1,\dots, V_k$ such that their direct sum $V_1\oplus\cdots\oplus V_k$ provides a faithful representation of $G$. Hence the result follows from the following known facts in representation theory (see, for example, \cite[Appendix]{Ste94}): 
\begin{itemize}
    \item When $G$ is of type $A_n$, the $(n+1)$-dimensional vector representation is a faithful minuscule representation. 
    \item When $G$ is of type $B_n$, the $2^n$-dimensional spin representation is a faithful minuscule representation. 
    \item When $G$ is of type $C_n$, the $2n$-dimensional vector representation is a faithful minuscule representation.     
    \item When $G$ is of type $D_n$, there exist two mutually non-isomorphic $2^{n-1}$-dimensional minuscule representations, called the half spin representations. Then the direct sum of these two representations is  faithful. 
    \item When $G$ is of type $E_6$, there are two faithful minuscule representations of dimension 27.
    \item When $G$ is of type $E_7$, there exists a faithful minuscule representation of dimension 56.
\end{itemize}
\end{proof}
\begin{rem}
The simple algebraic group of type $E_8$, $F_4$, $G_2$ does not admit non-trivial minuscule representations. This is why we exclude these types from the assumption. However, the statement makes sense also for these types and we do not know whether it holds in these cases. 
\end{rem}

\subsection{Decorated flags and pinnings}
The homogeneous spaces $\A_G:=G/U^+$ and $\B_G:=G/B^+$ are called the \emph{principal affine space} and the \emph{flag variety}, respectively. An element of $\A_G$ (resp. $\B_G$) is called a \emph{decorated flag} (resp. a \emph{flag}). There is a canonical $G$-equivariant projection $\pi: \A_G \to \B_G$. 
The basepoint of $\A_G$ is denoted by $[U^+]$. We also adopt the notation $[U^-]:=\vw.[U^+] \in \A_G$. 
The flag variety $\B_G$ is identified with the set of connected maximal solvable subgroups of $G$ via $g.B^+\mapsto gB^+g^{-1}$. 
The Cartan subgroup $H$ acts on $\A_G$ from the right by $g.[U^+].h:=gh.[U^+]$ for $g \in G$ and $h \in H$, making the projection $\pi: \A_G \to \B_G$ a principal $H$-bundle. 

For $k \in \Z_{\geq 2}$, the configuration space of decorated flags is defined to be the stack
\[
\Conf_k \A_G := [\overbrace{\A_G \times \dots \times \A_G}^{k\text{ times}}/G],
\]
where we consider the diagonal left action of $G$. 

By the Bruhat decomposition $G = \bigcup_{w \in W(G)} U^+H \overline{w} U^+$, any $G$-orbit in the space $\Conf_2 \A_G$ has a unique representative of the form $(\flA_1,\flA_2)=\left(h.[U^+], \overline{w}.[U^+]\right)$
for some $h \in H$ and $w \in W(G)$. The parameters $h(\flA_1,\flA_2):=h$ and $w(\flA_1,\flA_2):=w$ are called the \emph{$h$-invariant} and the \emph{$w$-distance} of $(\flA_1,\flA_2)$, respectively. They only depend on the $G$-orbit $[\flA_1,\flA_2]$. See \cite[Section 2.2]{GS19} for details on the properties of these parameters. 
A pair $(\flA_1,\flA_2)$ of decorated flags (or its $G$-orbit) is said to be \emph{generic} 
if $w(\flA_1,\flA_2) = w_0$. Genericity only depends on the underlying flags. 

\smallskip 

\paragraph{\textbf{Angle invariant.}}
Consider the configuration space $\Conf (\A_G,\B_G,\B_G)$ parametrizing the triples $(\flA_1,\flB_2,\flB_3) \in \A_G\times \B_G \times \B_G$ such that the pairs $(\flA_1,\flB_2)$ and $(\flA_1,\flB_3)$ are generic. The $G$-orbit of such a triple has a unique representative of the form $(\flA_1,\flB_2,\flB_3)=([U^+],B^-,u_+B^-)$ for some $u \in U^+$. Then we define the \emph{angle invariant} to be
\begin{align}\label{eq:angle_inv}
     \mathrm{an}:\Conf(\A_G,\B_G,\B_G) \to U^+,\quad [[U^+],B^-,u_+B^-] \mapsto u_+.
\end{align}
Composing the canonical character $\chi_s:=\Delta_{\varpi_s,r_s\varpi_s}: U^+ \to \mathbb{C}$, we get the \emph{potential functions} $W_s:=\chi_s\circ \mathrm{an}: \Conf(\A_G,\B_G,\B_G) \to \mathbb{C}$ for $s \in S$. 

\smallskip 

\paragraph{\textbf{Pinnings.}}
Following \cite{GS19}, we define a \emph{pinning} to be a generic pair $(\flA_1,\flB_2) \in \A_G\times \B_G$. Then the space $\P_G$ of pinnings is naturally a principal $G$-space. We will use the \emph{standard pinning} $p_\std:=([U^+],B^-)$ as a basepoint of this space. 
We define the \emph{opposite pinning} of $p=g.p_\std$ as $p^\ast:=g\vw^{-1}.p_\std$. 
Alternatively, a pinning is defined to be a generic pair $(\flA_1,\flA_2) \in \A_G\times \A_G$ such that $h(\flA_1,\flA_2)$ is trivial. Then we have $p_\std=([U^+],[U^-])$ and $p_\std^\ast=(s_G.[U^-],[U^+])$. 

\medskip
\section{Wilson lines on \texorpdfstring{$\A_{G,\Sigma}^\times$}{A(G,S)*}}\label{sec:Wilson_line}

\subsection{Notations on marked surfaces}\label{subsec:marked_surface}
A \emph{marked surface} $(\Sigma,\bM)$ consists of a  compact oriented surface $\Sigma$ and a fixed non-empty finite set $\bM \subset \partial\Sigma$ of \emph{marked points}. In particular, we do not consider marked points in the interior (\lq\lq punctures'').  
In this paper, we always assume that 
\begin{align}
    \text{each boundary component has at least one marked point.}
\end{align}
A connected component of $\partial\Sigma\setminus \bM$ is called a \emph{boundary interval}. The set of boundary intervals is denoted by $\bB=\bB(\Sigma)$. 
When no confusion can occur, we will simply denote a marked surface by $\Sigma$. 
We will always assume that
\begin{align}
    n(\Sigma):=-2\chi(\Sigma)+|\bM|>0.
\end{align}
This condition ensures that the marked surface $\Sigma$ admits an ideal triangulation with $n(\Sigma)$ triangles. 

The fiber bundle $T'\Sigma:=T\Sigma \setminus (\mbox{$0$-section})$ with fiber $\mathbb{R}^2 \setminus \{0\}$ is called the \emph{punctured tangent bundle}. The bundle projection $\pi:T'\Sigma \to \Sigma$ induces the exact sequence
\begin{align}\label{eq:bundle_sequence}
    1 \to \pi_1(\mathbb{R}^2\setminus \{0\}) \to \pi_1(T'\Sigma) \xrightarrow{\pi_\ast} \pi_1(\Sigma) \to 1.
\end{align}
The generator of $\pi_1(\mathbb{R}^2\setminus \{0\})\cong\mathbb{Z}$ is denoted by $\fiber$. 

\subsection{The boundary-fundamental groupoid.}\label{subsec:groupoid}
Now we are going to clarify our topological set-up for the definition of Wilson lines on $\A_{G,\Sigma}^\times$, for which we need a little care on the fiber direction of $T'\Sigma$. 
We fix a nowhere-vanishing vector field $v_\mathrm{ori}$ on $\partial\Sigma$ which induces an orientation on $\partial\Sigma$ compatible with that induced from $\Sigma$. 
Then we get two embeddings $\iota_\pm: \partial \Sigma \to T'\Sigma$, $x \mapsto (x,\pm v_\mathrm{ori}(x))$. For each boundary interval $E \in \bB$, fix a reference point $x_E \in E$ and define $E^\pm:=\iota_\pm(x_E) \in T'\Sigma$. We regard the points $E^\pm$ as representatives of oriented boundary intervals. 

\begin{dfn}\label{def:fundamental_groupoid}
The \emph{boundary-fundamental groupoid} $\Pi_1(T'\Sigma,\bB^\pm)$ of $T'\Sigma$ is the groupoid  where the objects are the points $E^\epsilon$ for $E \in \bB$ and $\epsilon \in \{+,-\}$, and morphisms from $E_1^{\epsilon_1}$ to $E_2^{\epsilon_2}$ are based-homotopy classes of continuous curves in $T'\Sigma$ from $E_1^{\epsilon_1}$ to $E_2^{\epsilon_2}$.  
We call a morphism $[c]:E_1^{\epsilon_1} \to E_2^{\epsilon_2}$ in this groupoid a \emph{framed arc class} in $\Sigma$. The composition of framed arc classes $[c_1]: E_1^{\epsilon_1} \to E_2^{\epsilon_2}$ and $[c_2]: E_2^{\epsilon_2} \to E_3^{\epsilon_3}$ is given by the concatenation $[c_1]\ast[c_2]: E_1^{\epsilon_1} \to E_3^{\epsilon_3}$.
\end{dfn}
Recall from \cite{IO20} the groupoid $\Pi_1(\Sigma,\bB)$, whose objects are the points $x_E$ and the morphisms are based-homotopy classes of continuous curves between them. 
We have a natural projection $\pi_\ast:\Pi_1(T'\Sigma,\bB^\pm) \to \Pi_1(\Sigma,\bB)$.

\begin{dfn}[transverse immersions and their standard lifts]
A \emph{transverse immersion} is an immersed curve $\overline{c}:[0,1] \to \Sigma$ such that $\overline{c}(0)=x_{E_0}$ and $\overline{c}(1)=x_{E_1}$ for some boundary intervals $E_0,E_1 \in \bB$, and transverse to $\partial\Sigma$. Then its \emph{standard lift} is the continuous curve $c:[0,1] \to T'\Sigma$ from $E_0^-$ to $E_1^-$ obtained as the composite of the following two paths:
\begin{itemize}
    \item the path $t \mapsto (\overline{c}(t),v_{\overline{c}}^\bot(t))$ in $T'\Sigma$, where $v_{\overline{c}}^\bot(t)$ is a nowhere-vanishing normal vector field along $\overline{c}$ which points toward the left side of $\overline{c}$, and such that $v_{\overline{c}}^\bot(0)=E_0^-$, $v_{\overline{c}}^\bot(1)=E_1^+$. 
    \item a path from $E_1^+$ to $E_1^-$ in the tangent space at $x_{E_1}$, which rotates the tangent vector clockwisely.
\end{itemize}
See \cref{fig:curve_lift}. Note that the homotopy class $[c]$ does not depend on the choice of the vector field $v_{\overline{c}}^\bot$ which satisfies the condition. 
\end{dfn}

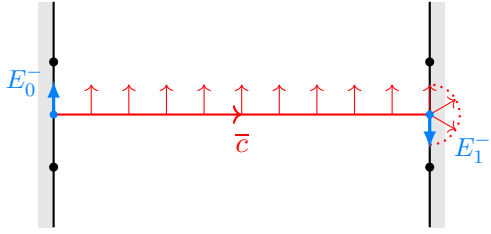
\begin{figure}
\begin{tikzpicture}
\fill[gray!20] (0,1.5) -- (-0.2,1.5) -- (-0.2,-1.5) -- (0,-1.5) --cycle;
\fill[gray!20] (5,1.5) -- (5+0.2,1.5) -- (5+0.2,-1.5) -- (5,-1.5) --cycle;
\draw[thick] (0,1.5) -- (0,-1.5);
\draw[thick] (5,-1.5) -- (5,1.5);
\filldraw(0,0.7) circle(1.5pt); 
\filldraw(0,-0.7) circle(1.5pt);
\filldraw(5,0.7) circle(1.5pt);
\filldraw(5,-0.7) circle(1.5pt);
\draw[red,thick,->-] (0,0) --node[midway,below=0.3em]{$\overline{c}$} (5,0);
{\color{red}
\foreach \i in {90,30,-30}
\tangent{5,0}{\i}{0.4};
\draw[thick,dotted] (5,-0.4) arc(-90:90:0.4);
\foreach \x in {0.5,1,1.5,2,2.5,3,3.5,4,4.5}
\tangent{\x,0}{90}{0.4};
\fill[myblue] (0,0) circle(1.5pt);
\draw[myblue,very thick,-latex] (0,0)--++(0,0.45) node[left,scale=0.9]{$E_0^-$};
\fill[myblue] (5,0) circle(1.5pt);
\draw[myblue,very thick,-latex] (5,0)--++(0,-0.45) node[right=0.5em,scale=0.9]{$E_1^-$};
}
\end{tikzpicture}
    \caption{The standard lift of a transverse immersion.}
    \label{fig:curve_lift}
\end{figure}

\begin{rem}
The fundamental groupoid $\Pi_1(T'\Sigma,\bB^\pm)$ has appeared in \cite[Section 8]{CL19} in their study on stated skein algebras. Our standard lifts are their ``good lifts'' with respect to the negative orientation, which are chosen so that certain matrix coefficients of the Wilson line along them give rise to cluster variables. See \cref{subsec:computation_minors}.
\end{rem}
For later use, we are going to give a good generating set of the fundamental groupoid $\Pi_1(T'\Sigma,\bB^\pm)$. For each boundary interval $E \in \bB$, let $\sqrt{\fiber_E}^{\inn},\sqrt{\fiber_E}^{\out}:E^- \to E^+$ denote the framed arc classes contained in the tangent space at $x_E$ which go from $E^-$ to $E^+$ clockwisely and counter-clockwisely, respectively. Then 
\begin{align*}
    \fiber_{E^-}:=\sqrt{\fiber_E}^{\inn}\ast (\sqrt{\fiber_E}^{\out})^{-1} \in \mathrm{Aut}(E^-)\quad \mbox{and} \quad \fiber_{E^+}:=(\sqrt{\fiber_E}^{\out})^{-1}\ast \sqrt{\fiber_E}^{\inn} \in \mathrm{Aut}(E^+)
\end{align*}
represent the clockwise fiber loop. We say that a framed arc class $[c]:E_1^- \to E_2^-$ is \emph{simple} if $E_1 \neq E_2$ and it is represented by the standard lift $c$ of a transverse immersion $\overline{c}$ without self-intersection. Notice that it implies that there exists a band neighborhood $B_c$ of $\overline{c}$ which gives an immersed quadrilateral which is embedded except for its vertices\footnotemark, as shown in \cref{fig:band_nbd}.

\begin{figure}[ht]
\begin{tikzpicture}
\draw(0,-0.2) ellipse(4.5cm and 2.5cm);
\draw[shorten >=-15pt,shorten <=-15pt] (-1,1) to[bend right=20] 
node[inner sep=0,pos=-0.15](A){} node[inner sep=0,pos=1.15](B){} 
(1,1);
\draw(A) to[bend left=20] node[inner sep=0,pos=0.7](C){} (B);
\filldraw[thick,fill=gray!20] (-3,-0.5) circle(.7cm);
\filldraw[thick,fill=gray!20] (3,-0.5) circle(.7cm);
\foreach \i in {-30,90,210}
\fill(-3,-0.5)++(\i:0.7) circle(2pt);
\draw(-3,-0.5)++(90:0.7) node[above=0.2em,scale=0.9]{$m_L$};
\draw(-3,-0.5)++(-30:0.7) node[below right,scale=0.9]{$m_R$};
\foreach \i in {120,240}
\fill(3,-0.5)++(\i:0.7) circle(2pt);
\draw(3,-0.5)++(120:0.7) node[above,scale=0.9]{$m^L$};
\draw(3,-0.5)++(240:0.7) node[below,scale=0.9]{$m^R$};
\draw[red,thick,->-={0.6}{}] (-3,-0.5)++(30:0.7) to[out=30,in=180] node[midway,above]{$c$} ($(3,-0.5)+(180:0.7)$);
\begin{pgfonlayer}{bg}  
\filldraw[fill=pink!30,draw=red,dashed] (-3,-0.5)++(90:0.7) to[out=30,in=180] ($(3,-0.5)+(120:0.7)$) arc[start angle=120, end angle=240, radius=0.7] to[out=180,in=30] node[red,midway,below]{$B_c$} ($(-3,-0.5)+(-30:0.7)$) arc[start angle=-30, end angle=60, radius=0.7];
\end{pgfonlayer}
\end{tikzpicture}
    \caption{A simple framed arc class $[c]:E_1^- \to E_2^-$ and a band neighborhood $B_c$. Here it is allowed that one of $m_L=m^L$ or $m_R=m^R$ holds.}
    \label{fig:band_nbd}
\end{figure}
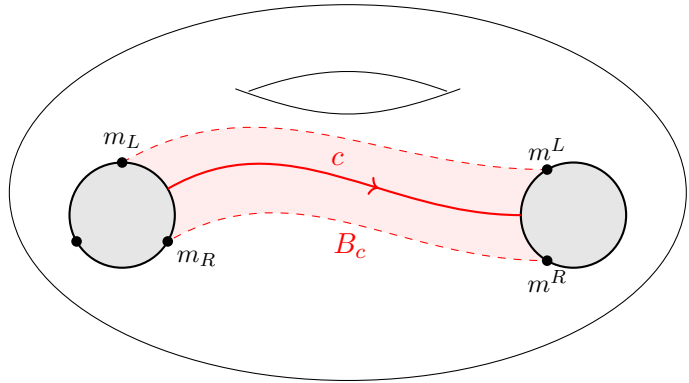

\footnotetext{This is seen as follows. Since the transverse immersion $\overline{c}:[0,1] \to \Sigma$ has no self-intersections,  we can surely take an embedding $\iota:[0,1] \times [-\epsilon,\epsilon] \to \Sigma$ so that $\iota(t,0)=\overline{c}(t)$, $I_1:=\iota(\{0\} \times [-\epsilon,\epsilon]) \subset E_1$ and $I_2:=\iota(\{1\} \times [-\epsilon,\epsilon]) \subset E_2$. Since $E_1 \neq E_2$ by assumption, we can modify ('spread out') this embedding in small neighborhoods of $E_1$ and $E_2$ so that $I_1=E_1$ and $I_2=E_2$. The image of this modified map is $B_c$.}

\begin{lem}\label{lem:groupoid_generator}
The fundamental groupoid $\Pi_1(T'\Sigma,\bB^\pm)$ is finitely generated. Namely, there exists a finite set $\mathsf{S}$ of framed arc classes such that any framed arc class can be written as a finite concatenation of those in $\mathsf{S}$ and their inverses. Moreover if $\Sigma$ has at least two marked points, then the generating set $\mathsf{S}$ can be chosen so that it consists of simple framed arc classes and $\sqrt{\fiber_E}^{\inn},\sqrt{\fiber_E}^{\out}$ for $E \in \mathbb{B}$.
\end{lem}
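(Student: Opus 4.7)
The plan splits into two parts.

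For the finite generation of $\Pi_1(T'\Sigma,\bB^\pm)$, this is a standard fact: $T'\Sigma$ has the homotopy type of a finite CW complex (being a rank-two vector bundle minus its zero section over the compact surface $\Sigma$) and $\bB^\pm$ is finite, so the fundamental groupoid based at $\bB^\pm$ is finitely generated. Concretely, one picks a spanning tree in the groupoid and combines it with any finite generating set of the fundamental group at a chosen base object.

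For the moreover part, the argument would exploit two geometric facts. Since $\Sigma$ has non-empty boundary, the tangent bundle $T\Sigma$ is trivial, and consequently the exact sequence \eqref{eq:bundle_sequence} splits: any two framed arc classes with the same image under $\pi_\ast:\Pi_1(T'\Sigma,\bB^\pm)\to\Pi_1(\Sigma,\bB)$ differ by a suitable insertion of fiber loops $\fiber_{E^\pm}$. Moreover, by the very definition $\fiber_{E^-}=\sqrt{\fiber_E}^{\inn}\ast(\sqrt{\fiber_E}^{\out})^{-1}$ lies in the subgroupoid generated by the half-fibers. Combining these, it is enough to exhibit a finite generating set of the ordinary fundamental groupoid $\Pi_1(\Sigma,\bB)$ consisting of embedded arcs between distinct boundary intervals, since the standard lift of such an arc is, by construction, a simple framed arc class.

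To produce such arcs, fix an ideal triangulation $\Delta$ of $\Sigma$ (which exists by the assumption $n(\Sigma)>0$). The hypothesis $|\bM|\geq 2$ gives $|\bB|\geq 2$ and ensures that every marked point is adjacent to two well-defined boundary intervals (one on each side). For each edge $e$ of $\Delta$ with endpoints $m,m'$ (either an interior diagonal or a boundary interval), choose boundary intervals $E\neq E'$ adjacent to $m,m'$ respectively, and perturb $e$ into the interior to obtain an embedded transverse arc $a_e$ from $x_E$ to $x_{E'}$ lying in a small collar neighborhood of $e$. The resulting finite collection $\{a_e\}$ consists of simple arcs and generates $\Pi_1(\Sigma,\bB)$ by a standard deformation argument: any arc in $\Sigma$ can be homotoped into an arbitrarily small neighborhood of the $1$-skeleton of $\Delta$ and then cut into pieces, each homotopic to some $a_e^{\pm 1}$.

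Setting $\mathsf{S}:=\{[a_e^{\mathrm{std}}]\}_e\cup\{\sqrt{\fiber_E}^{\inn},\sqrt{\fiber_E}^{\out}\mid E\in\bB\}$, where $a_e^{\mathrm{std}}$ denotes the standard lift of $a_e$, the argument concludes as follows: given any $[c]:E_1^{\epsilon_1}\to E_2^{\epsilon_2}$, pre- and post-compose with appropriate half-fibers to reduce to $\epsilon_1=\epsilon_2=-$, decompose $\pi_\ast[c]$ as a word in the $a_e^{\pm 1}$ in $\Pi_1(\Sigma,\bB)$, and lift each piece by the corresponding element of $\mathsf{S}$. The resulting framed arc class projects to the same element as $[c]$ and hence differs from $[c]$ only by a product of fiber loops, which already lies in $\langle\mathsf{S}\rangle$ by the preceding remark. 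The main subtlety I anticipate is the careful construction of the $a_e$'s near the marked points---choosing the local sectors consistently so that the resulting arcs are simultaneously embedded and transverse to $\partial\Sigma$ at the endpoints in the sense required by the definition of simplicity---whereas the passage between $T'\Sigma$ and $\Sigma$ is essentially formal once the splitting of \eqref{eq:bundle_sequence} is in hand.
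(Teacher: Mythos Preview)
Your overall architecture is sound and matches the paper's: reduce via the half-fibers $\sqrt{\fiber_E}^{\inn},\sqrt{\fiber_E}^{\out}$ to framed arc classes between $-$-objects, note that any two lifts of the same class in $\Pi_1(\Sigma,\bB)$ differ by fiber loops (this only uses the exact sequence \eqref{eq:bundle_sequence}, not a splitting), and then produce a generating set of $\Pi_1(\Sigma,\bB)$ by arcs whose standard lifts are simple.

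Where you diverge is in how you generate $\Pi_1(\Sigma,\bB)$. The paper fixes one boundary interval $E_0$, takes embedded loops $\overline{\alpha}_i,\overline{\beta}_i,\overline{\gamma}_j$ based at $x_{E_0}$ generating $\pi_1(\Sigma,x_{E_0})$ together with embedded arcs $\overline{\epsilon}_E:x_{E_0}\to x_E$ for $E\neq E_0$, and then---using only the existence of a \emph{second} boundary interval $E_1$---splits each loop $\eta$ into two embedded arcs $\eta_1:x_{E_0}\to x_{E_1}$ and $\eta_2:x_{E_1}\to x_{E_0}$ (\cref{fig:decomposition_simple_class}). All the resulting arcs have distinct endpoints and no self-intersections, hence their standard lifts are simple by definition.

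Your triangulation approach, by contrast, runs into a genuine obstacle that is not the one you flagged. Suppose some boundary component of $\Sigma$ carries a single marked point $m$ (this is allowed under $|\bM|\geq 2$: take e.g.\ an annulus with one marked point on each component). Then $m$ is adjacent to exactly one boundary interval, so any edge of $\Delta$ that is a loop at $m$---in particular the boundary interval at $m$ itself, and typically several interior diagonals as well---admits \emph{no} choice of $E\neq E'$ both adjacent to its endpoints; your arc $a_e$ cannot be made simple. Even after discarding such edges, you would need to argue that the remaining $a_e$'s still generate, and the ``cut into pieces each homotopic to some $a_e^{\pm1}$'' step already requires care with endpoint matching at the vertices of $\Delta$. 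The paper's loop-splitting trick sidesteps all of this: the hypothesis $|\bM|\geq 2$ is used exactly once, to furnish a second basepoint through which every loop can be routed.
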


\begin{proof}
Fix a boundary interval $E_0 \in \bB$. We choose a collection of transverse immersions in $\Sigma$, as follows.
\begin{itemize}
    \item Let $S_0=\{(\overline{\alpha}_i)_{i=1}^g,(\overline{\beta}_i)_{i=1}^g,(\overline{\gamma}_j)_{j=1}^b\}$ be transversely-immersed loops based at $x_0:=x_{E_0}$ without self-intersections, whose homotopy classes generate the fundamental group $\pi_1(\Sigma,x_0)$. 
    \item For each boundary interval $E \neq E_0$, let $\overline{\epsilon}_E$ be a transverse immersion without self-intersection running from $x_0$ to $x_E$. 
\end{itemize}
We first claim that the standard lifts of these transverse immersions, together with the framed arc classes $\sqrt{\fiber_E}^{\inn},\sqrt{\fiber_E}^{\out}$ for $E \in \mathbb{B}$, generate the fundamental groupoid. Let $\mathcal{G} \subset \Pi_1(T'\Sigma,\bB^\pm)$ denote the sub-groupoid generated by these framed arc classes.

First note that the fiber loops $\fiber_{E^\pm}$ are contained in $\mathcal{G}$. 
Take an arbitary framed arc class $[c]:E_1^{\epsilon_1} \to E_2^{\epsilon_2}$. By concatenating the framed arc classes $\sqrt{\fiber_{E_i}}^{\out}$, $i=1,2$ or their inverses if necessary, we may assume $\epsilon_1=\epsilon_2=-$. 
Consider its projection $[\overline{c}]:=\pi_\ast([c])$. Then the concatenation $[\overline{\epsilon}_{E_1}]\ast [\overline{c}] \ast [\overline{\epsilon}_{E_2}]^{-1}$ is an element of $\pi_1(\Sigma,x_0)$, which can be written as a concatenation of the elements in $S_0$. It implies that there exists a framed arc class $[c']: E_1^- \to E_2^-$ that lies in $\mathcal{G}$ such that $\pi_\ast([c'])=\pi_\ast([c])$. Then by the exact sequence \eqref{eq:bundle_sequence}, the arc class $[c]$ can be written as a concatenation of $[c']$ and several powers of fiber loops. Hence $[c]$ belongs to $\mathcal{G}$. Thus the first assertion is proved.

Assume that $\Sigma$ has at least two marked points. It implies that $\Sigma$ has at least two boundary intervals, say $E_0 \neq E_1$. Then we can decompose each loop $\eta \in S_0$ into two simple arcs $\eta_1$ and $\eta_2$, as shown in \cref{fig:decomposition_simple_class}. Therefore we can replace the standard lift of $\eta$ with the standard lifts of $\eta_1$ and $\eta_2$ in the generating set, where the latter two are simple framed arc classes. Thus the second assertion is proved.
\end{proof}

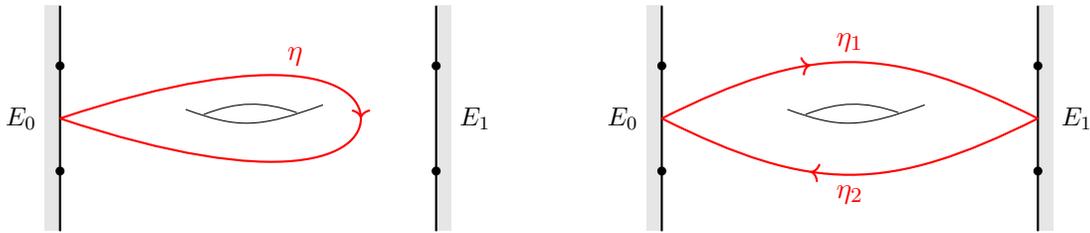
\begin{figure}[htbp]
\centering
\begin{tikzpicture}
\fill[gray!20] (0,1.5) -- (-0.2,1.5) -- (-0.2,-1.5) -- (0,-1.5) --cycle;
\fill[gray!20] (5,1.5) -- (5+0.2,1.5) -- (5+0.2,-1.5) -- (5,-1.5) --cycle;
\draw[thick] (0,1.5) -- (0,-1.5);
\draw[thick] (5,-1.5) -- (5,1.5);
\filldraw(0,0.7) circle(1.5pt); 
\filldraw(0,-0.7) circle(1.5pt);
\filldraw(5,0.7) circle(1.5pt);
\filldraw(5,-0.7) circle(1.5pt);
\draw[red,thick,->-] (0,0) ..controls (3,1) and (4,0.5).. node[midway,above]{$\eta$} (4,0)
..controls (4,-0.5) and (3,-1).. (0,0);
\draw[shorten >=-15pt,shorten <=-10pt] (2,0) to[bend right=20] 
node[inner sep=0,pos=-0.10](A){} node[inner sep=0,pos=1.15](B){} 
(3,0);
\draw(A) to[bend left=20] node[inner sep=0,pos=0.7](C){} (B);
\draw (0,0) node[left=0.5em,scale=0.9]{$E_0$};
\draw(5,0) node[right=0.5em,scale=0.9]{$E_1$};

\begin{scope}[xshift=8cm]
\fill[gray!20] (0,1.5) -- (-0.2,1.5) -- (-0.2,-1.5) -- (0,-1.5) --cycle;
\fill[gray!20] (5,1.5) -- (5+0.2,1.5) -- (5+0.2,-1.5) -- (5,-1.5) --cycle;
\draw[thick] (0,1.5) -- (0,-1.5);
\draw[thick] (5,-1.5) -- (5,1.5);
\filldraw(0,0.7) circle(1.5pt); 
\filldraw(0,-0.7) circle(1.5pt);
\filldraw(5,0.7) circle(1.5pt);
\filldraw(5,-0.7) circle(1.5pt);
\draw[red,thick,->-={0.4}{}] (0,0) ..controls (2,1) and (3,1).. node[midway,above]{$\eta_1$} (5,0);
\draw[red,thick,->-={0.6}{}] (5,0) ..controls (3,-1) and (2,-1)..node[midway,below]{$\eta_2$} (0,0);
\draw[shorten >=-15pt,shorten <=-10pt] (2,0) to[bend right=20] 
node[inner sep=0,pos=-0.10](A){} node[inner sep=0,pos=1.15](B){} 
(3,0);
\draw(A) to[bend left=20] node[inner sep=0,pos=0.7](C){} (B);
\draw (0,0) node[left=0.5em,scale=0.9]{$E_0$};
\draw(5,0) node[right=0.5em,scale=0.9]{$E_1$};
\end{scope}
\end{tikzpicture}
    \caption{Decomposition of a loop into two arcs.}
    \label{fig:decomposition_simple_class}
\end{figure}

\begin{rem}\label{rem:opposite_arc}
For a transverse immersion $\overline{c}:[0,1] \to \Sigma$, let $\overline{c}^{\op}:[0,1] \to \Sigma$ be the transverse immersion in the opposite direction given by $\overline{c}^{\op}(t):=\overline{c}(1-t)$. Let $c: E_0^- \to E_1^-$, $c^{\op}:E_1^- \to E_0^-$ be the standard lifts of $\overline{c}$, $\overline{c}^{\op}$, respectively. Then they satisfy 
\begin{align*}
    [c] \ast [c^{\op}] = \fiber_{E_0^-}, \quad [c^{\op}]\ast [c] = \fiber_{E_1^-}.
\end{align*}
In particular, note that $[c]^{-1} \neq [c^{\op}]$ as framed arc classes. 
\end{rem}

\subsection{Wilson lines on $\A_{G,\Sigma}^\times$}

Let $G$ be a semisimple simply-connected algebraic group, $\Sigma$ a marked surface without punctures. 
Fix an outward tangent vector field $v_\out$ on $\partial\Sigma$, which gives rise to an embedding $\iota_\out:\partial\Sigma \to T'\Sigma$, $x \mapsto (x,v_\out(x))$. 
Recall from \cite[Definition 2.3]{FG03} that a \emph{twisted $G$-local system on $\Sigma$} is a $G$-local system on the punctured tangent bundle $T'\Sigma$ with the monodromy $s_G$ along the fiber loop $\fiber$. A \emph{decoration} of a twisted $G$-local system $\L$ is a flat section $\alpha$ of the associated bundle $\L_\A:=\L \times_G \A_G$ defined over (a small neighborhood of) $\iota_\out(\partial\Sigma \setminus \{x_E\}_{E \in \bB})$.
In particular, the flat section is defined on a vicinity of the outward lift of each marked point. A decoration is said to be \emph{generic} if for each $E \in \bB$, the pair of flat sections defined near the endpoints of $E$ gives rise to a generic pair of decorated flags when they are evaluated at a common point via a parallel-transport.

\begin{dfn}[{\cite[Definition 2.4]{FG03}}]
Let $\A_{G,\Sigma}$ denote the moduli space of decorated twisted $G$-local systems on $\Sigma$. The open substack consisting of the generically decorated twisted $G$-local systems is denoted by $\A_{G,\Sigma}^\times \subset \A_{G,\Sigma}$. 
\end{dfn}
Let us briefly mention the description of $\A_{G,\Sigma}$ as a quotient stack. Fix a basepoint $\xi=(x,v) \in T'\Sigma$. 
A \emph{rigidification} of a twisted $G$-local system $(\L,\alpha)$ at $\xi$ is a choice of a point $s \in \L_\xi$. Let $A_{G,\Sigma}$ denote the set of isomorphism classes of rigidified twisted $G$-local systems $(\L,\alpha;s)$. The group $G$ acts on $A_{G,\Sigma}$ by $g.[\L,\alpha;s] :=[\L,\alpha;s.g]$ for $g\in G$. 
The following can be verified similarly to \cite[Definition 2.2]{FG03}, with a little care on twistings:

\begin{lem}\label{lem:moduli_atlas}
The set $A_{G,\Sigma}$ has a natural structure of quasi-affine $G$-variety, isomorphic to
\begin{align*}
     \Hom^{\mathrm{tw}}(\pi_1(T'\Sigma,\xi),G) \times (\A_G)^{\bM}.
\end{align*}
Here $\Hom^{\mathrm{tw}}(\pi_1(T'\Sigma,\xi),G) \subset \Hom(\pi_1(T'\Sigma,\xi),G)$ denotes the subspace such that $\rho(\fiber_\xi)=s_G$; $\fiber_\xi \in \pi_1(T'\Sigma,\xi)$ being the fiber loop based at $\xi$. 
\end{lem}

Thus we get a stacky definition $\A_{G,\Sigma}:=[A_{G,\Sigma}/G]$. In particular, its function ring is $\cO(\A_{G,\Sigma})=\cO(A_{G,\Sigma})^G$. 

\begin{rem}\label{rem:integral}
Since $\pi_1(\Sigma)$ is a free group, the central extension \eqref{eq:bundle_sequence} splits. Therefore we have non-canonical isomorphisms $\pi_1(T'\Sigma) \cong \pi_1(\Sigma) \times \Z$ and $\Hom^{\mathrm{tw}}(\pi_1(T'\Sigma,\xi),G) \cong \Hom(\pi_1(\Sigma),G) \cong G^{2g+b-1}$, where $b$ denotes the number of boundary components of $\Sigma$. In particular, $\cO(A_{G,\Sigma})$ is an integral domain, so is its subalgebra $\cO(\A_{G,\Sigma})$. The field $\mathcal{K}(\A_{G,\Sigma})$ of rational functions is defined to be its field of fractions. 
\end{rem}

\begin{ex}\label{ex:polygon}
Let $\Sigma=\mathbb{D}_k$ be a $k$-gon, which is a disk with $k$ marked points on the boundary. 
Choose one distinguished marked point, and let $m_1,\dots,m_k$ denote the marked points in the counter-clockwise order, where $m_1$ is the distinguished one. 
Fix a trivialization $T'\mathbb{D}_k \cong \mathbb{D}_k \times (\bR^2 \setminus \{0\})$, and let $v_j \in T'_x \mathbb{D}_k$ denote a tangent vector points toward $m_j$ for $j=1,\dots,k$. 
We may assume that $\xi=(x,v_1)$. Then we choose the following paths in $T'\mathbb{D}_k$:
\begin{itemize}
    \item The straight line $\epsilon_1$ from $\xi$ to $\iota_\out(m_1)$;
    \item For $j=2,\dots,k$, the path $\epsilon_j$ which first rotates from $\xi=(x,v_1)$ to $(x,v_j)$ in the counter-clockwise direction inside the tangent space at $x$, and then goes straight from $(x,v_j)$ to $\iota_\out(m_j)$.
\end{itemize}
See the left picture in \cref{fig:polygon}. Let $(\L,\alpha;s)$ be a rigidified twisted $G$-local system. The rigidification $s$ determines a trivialization $(\L_\A)_\xi \cong \A_G$. 
We have a flat section $\alpha_j$ of $\L_\A$ defined near $\iota_\out(m_j)$ for $j=1,\dots,k$. Via the parallel-transport along the path $\epsilon_j$, we can evaluate it at the basepoint $\xi$, which gives rise to a decorated flag $\alpha_j(\xi) \in (\L_\A)_\xi \cong \A_G$. Thus we get a $G$-equivariant isomorphism
\begin{align*}
    A_{G,\mathbb{D}_k} \xrightarrow{\sim} \A_G^k, \quad (\L,\alpha;s) \mapsto (\alpha_1(\xi),\alpha_2(\xi),\dots,\alpha_k(\xi)),
\end{align*}
which descends to an isomorphism $f_{m_1}:\A_{G,\mathbb{D}_k} \xrightarrow{\sim} \Conf_k \A_G:=[\A_G^k/G]$ of stacks. Alternatively, one can think that we are choosing a branch cut $\ell$ in the fiber direction $\bR^2 \setminus \{0\}$ as shown in \cref{fig:polygon} by waved orange line, and trivializing the twisted local systems on the contractible region $\mathbb{D}_k \times (\bR^2 \setminus \ell)$. When we discuss the polygon case, we will only show this branch cut to indicate the isomorphism we use.

When we replace the distinguished marked point $m_1$ with $m_2$ and use a similar choice of paths as shown in the right picture in \cref{fig:polygon}, then we get another isomorphism $f_{m_2}:\A_{G,\mathbb{D}_k} \xrightarrow{\sim} \Conf_k \A_G$. Then the coordinate transformation $f_{m_2}\circ f_{m_1}^{-1}$ is given by the \emph{twisted cyclic shift}
\begin{align*}
    \mathcal{S}_k: \Conf_k \A_G \xrightarrow{\sim} \Conf_k \A_G, \quad [\flA_1,\dots,\flA_{k-1},\flA_k] \mapsto [\flA_2,\dots,\flA_k,s_G.\flA_1].
\end{align*}
The substack of $\Conf_k \A_G$ corresponding to $\A_{G,\mathbb{D}_k}^\times$ is denoted by $\Conf_k^\times \A_G$. 
\end{ex}

\begin{figure}[htbp]
\begin{tikzpicture}
\draw (0,0) circle(2cm);
\foreach \i in {1,2,3,4,5,6}
{
\fill (\i*60+30:2) circle(1.5pt);
\draw[thick,-latex] (\i*60+30:2) --++(\i*60+30:0.4);
} 
\draw [myorange,thick,decorate,decoration={snake,amplitude=2pt,pre length=2pt,post length=0pt}](0,0) -- (60:2.4);
\draw[red,very thick,-latex] (0,0)  --++(90:0.4);
\draw[red,thick,dashed,->-] (0,0) -- (0,2);
\draw[red,thick,dashed] (0,0.4) arc(90:390:0.4);
\foreach \j in {0,2,3,4,5}
\draw[red,thick,dashed,->-] (\j*60+30:0.4) --++(\j*60+30:1.6);
\foreach \i in {2,3,5,6}
\node[scale=0.9] at (\i*60+30:2.9) {$\iota_\mathrm{out}(m_\i)$};
\foreach \i in {1,4}
\node[scale=0.9] at (\i*60+30:2.7) {$\iota_\mathrm{out}(m_\i)$};
\foreach \i in {1,2,3,4,5,6}
\node[red] at (\i*60+20:1.4) {$\epsilon_\i$};

\begin{scope}[xshift=8cm]
\draw (0,0) circle(2cm);
\foreach \i in {1,2,3,4,5,6}
{
\fill (\i*60+30:2) circle(1.5pt);
\draw[thick,-latex] (\i*60+30:2) --++(\i*60+30:0.4);
} 
\draw[red,very thick,-latex] (0,0)  --++(150:0.4);
\draw [myorange,thick,decorate,decoration={snake,amplitude=2pt,pre length=2pt,post length=0pt}](0,0) -- (120:2.4);
\draw[red,thick,dashed,->-] (0,0) -- (150:2);
\draw[red,thick,dashed] (150:0.4) arc(150:450:0.4);
\foreach \j in {0,1,3,4,5}
\draw[red,thick,dashed,->-] (\j*60+30:0.4) --++(\j*60+30:1.6);
\foreach \i in {2,3,5,6}
\node[scale=0.9] at (\i*60+30:2.9) {$\iota_\mathrm{out}(m_\i)$};
\foreach \i in {1,4}
\node[scale=0.9] at (\i*60+30:2.7) {$\iota_\mathrm{out}(m_\i)$};
\foreach \i in {1,2,3,4,5,6}
\node[red] at (\i*60+20:1.4) {$\epsilon'_\i$};
\end{scope}
\end{tikzpicture}
    \caption{Construction of isomorphisms $\A_{G,\mathbb{D}_k} \cong \Conf_k \A_G$ with $k=6$. The flat sections defined near $\iota_\out(m_j)$ are parallel-transported along the dashed lines towards the basepoint $\xi$ (shown by the solid arrow in the center of the disk).}
    \label{fig:polygon}
\end{figure}
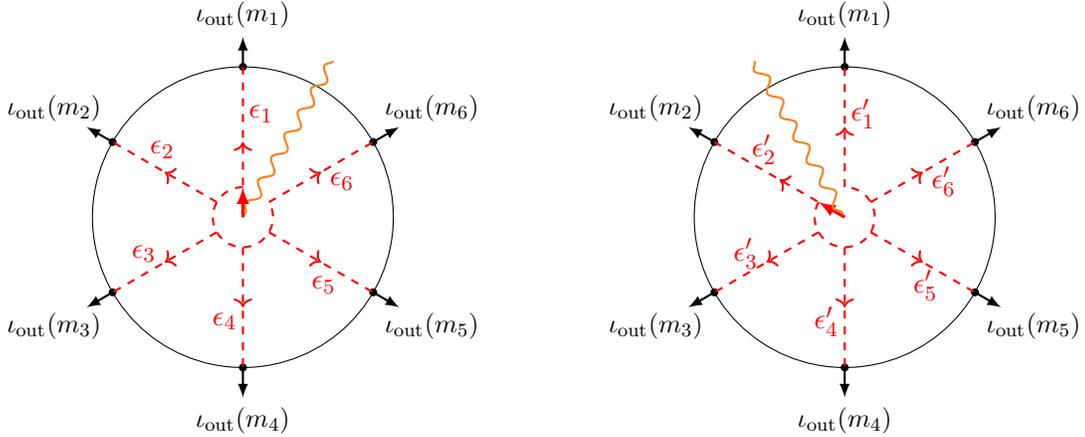

\paragraph{\textbf{Pinnings.}}

For each boundary interval $E \in \bB$, let $m^\pm_E \in \bM$ be its endpoint in the direction $E^\pm$. Given a generic decorated twisted $G$-local system $(\L,\alpha)$, let $\alpha^\pm_E$ be the flat section defined near $\iota_\out(m_E^\pm)$. To the pair $(\alpha_E^-,\alpha_E^+)$, we associate a pinning as follows. Parallel-transport the flat sections $\alpha_E^\pm$ along the line $\iota_\out(E)$ to the common point $\iota_\out(x_E)$, and continue to transport them along the path in the tangent space at $x_E$ to the point $E^-=\iota_-(x_E)$ in the clockwise direction. Then we get a generic pair $(\flA_E^-,\flA_E^+)$ in $(\L_\A)_{E^-}$, and the pinning $p_{E^-}:=(\flA_E^-,\pi(\flA_E^+)) \in (\L\times_G \P_G)_{E^-}$. We may associate another pinning $p_{E^+}:=(\hat{\flA}_E^+,\pi(\hat{\flA}_E^-)) \in (\L\times_G \P_G)_{E^+}$, where the pair $(\hat{\flA}_E^-,\hat{\flA}_E^+)$ in $(\L_\A)_{E^+}$ is obtained by the parallel-transport of the pair $(\flA_E^-,\flA_E^+)$ via the outward path $\sqrt{\fiber_{E}}^{\out}$. 

\begin{rem}
Thanks to the absence of punctures, the decorated twisted $G$-local system $(\L,\alpha)$ can be uniquely recovered from the data $(\L,(p_{E^-})_{E \in \bB})$ or $(\L,(p_{E^+})_{E \in \bB})$. Thus the subspace $\A_{G,\Sigma}^\times$ can be identified with a variant $\P_{G,\Sigma}$ of the $\P$-type moduli space in \cite{GS19} for simply-connected groups $G$. Then similarly to \cite[Lemma 3.8]{IO20}, one can verify that $\A_{G,\Sigma}^\times$ is an algebraic variety.
\end{rem}

Now we define the Wilson lines on $\A_{G,\Sigma}^\times$ in a similar manner as in \cite{IO20}. Let $[c]: E_1^{\epsilon_1} \to E_2^{\epsilon_2}$ be a framed arc class, and $(\L,\alpha)$ a generically decorated twisted $G$-local system. Choose a local trivialization $s_1$ of $\L$ on a vicinity of $E_1^{\epsilon_1}$ so that $p_{E_1^{\epsilon_1}}=p_\std$. Extend $s_1$ via the parallel-transport along the path $c$, until the terminal point $E_2^{\epsilon_2}$. Then 
\[p_{E_2^{\epsilon_2}}=g.p_\std^\ast=g\vw^{-1}.p_\std
\]
for a unique element $g \in G$ under this trivialization. Define $g_{[c]}([\L,\alpha]):=g$. The following can be verified in the same way as \cite[Proposition 3.25]{IO20}:

\begin{lemdef}[Wilson lines]
    For any framed arc class $[c]: E_1^{\epsilon_1} \to E_2^{\epsilon_2}$, the construction above produces a morphism
    \begin{align*}
        g_{[c]}: \A_{G,\Sigma}^\times \to G
    \end{align*}
    of stacks, which we call the \emph{Wilson line} along $[c]$. The morphism $g_{[c]}^\mathrm{tw}:=g_{[c]}\vw^{-1}$ is called the \emph{twisted Wilson line}.
\end{lemdef}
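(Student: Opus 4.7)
The plan is to adapt the strategy of \cite[Proposition 3.25]{IO20} from the adjoint-group moduli space $\P_{G',\Sigma}$ to the simply-connected variant $\A_{G,\Sigma}^\times$. The whole construction rests on $G$ acting simply transitively on the space $\P_G$ of pinnings, so that the condition $p_{E_1^{\epsilon_1}}=p_\std$ pins down the local trivialization $s_1$ uniquely.

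First I would verify that $g_{[c]}$ is a well-defined element of $G$ at each rigidified representative $(\L,\alpha;s) \in A_{G,\Sigma}^\times$. Existence and uniqueness of the normalizing trivialization $s_1$ follow from simple transitivity of the $G$-action on $\P_G$. Extending $s_1$ by parallel transport along $c$ yields a terminal pinning $p_{E_2^{\epsilon_2}}$ which, by genericity of the decoration, lies in the open $G$-orbit through $p_\std^\ast = \vw^{-1}.p_\std$; the equation $p_{E_2^{\epsilon_2}} = g\vw^{-1}.p_\std$ therefore has a unique solution $g=g_{[c]} \in G$. Dependence on the homotopy class $[c]$ (rather than its representative) is immediate from parallel transport being homotopy-invariant; compatibility with the fiber loops in \eqref{eq:bundle_sequence} is ensured because the prescribed monodromy $s_G$ is central and acts trivially on $\P_G$, so its insertion cancels after the normalization. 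To descend to the stack quotient $\A_{G,\Sigma}^\times = [A_{G,\Sigma}^\times/G]$, I would observe that changing the rigidification by $g_0 \in G$ left-multiplies both endpoint pinnings by $g_0$, and the normalization in Step 1 absorbs this factor, leaving $g$ unchanged.

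The main point remaining is the algebraicity of $g_{[c]}$. Since $g_{[c]}$ depends only on the decorated local system restricted to a regular neighborhood of $c$, I would reduce to the polygon case via \cref{lem:groupoid_generator}: it suffices to treat simple framed arc classes and the half-fiber paths $\sqrt{\fiber_E}^{\inn},\sqrt{\fiber_E}^{\out}$. For a simple $[c]$, the band neighborhood of \cref{fig:band_nbd} is a quadrilateral, so $g_{[c]}$ factors through the restriction morphism $\A_{G,\Sigma}^\times \to \A_{G,\mathbb{D}_4}^\times \cong \Conf_4^\times \A_G$, where its value can be read off from the four decorated flags using Bruhat normal forms exactly as in \cite[Proposition 3.25]{IO20}. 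The half-fiber contributions yield the constants $\vw^{\pm 1}$. Finally, the cocycle relation $g_{[c_1]\ast[c_2]} = g_{[c_1]}\vw^{-1}g_{[c_2]}$, obtained by concatenating trivializations, propagates algebraicity to arbitrary framed arc classes. The assertion for the twisted Wilson line $g_{[c]}^{\mathrm{tw}} = g_{[c]}\vw^{-1}$ is then automatic, and under this twist the cocycle becomes a strict groupoid homomorphism $g_{[c_1]\ast[c_2]}^{\mathrm{tw}} = g_{[c_1]}^{\mathrm{tw}}g_{[c_2]}^{\mathrm{tw}}$.
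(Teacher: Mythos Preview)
Your approach is exactly what the paper intends: it gives no argument beyond the sentence ``The following can be verified in the same way as \cite[Proposition 3.25]{IO20}'', and you are carrying out that adaptation. One correction: the boundary Wilson lines $g_{\sqrt{\fiber_E}^{\inn}}$ and $g_{\sqrt{\fiber_E}^{\out}}$ are not the constants $\vw^{\pm 1}$ but rather $H$-valued functions determined by the $h$-invariants of the pair $(\flA_E^-,\flA_E^+)$ (cf.\ \cref{prop:boundary condition} and \cref{ex:triangle_case}); this does not harm your algebraicity argument, since these $h$-invariants are manifestly regular on the relevant configuration space, but the sentence as written is false. A smaller point: the generating set consisting of simple classes plus half-fiber paths in \cref{lem:groupoid_generator} is only available when $\Sigma$ has at least two marked points, so for the general Lemma-Definition you should either invoke the first part of that lemma (finite generation without simplicity) and observe that each generator still factors through a polygon, or appeal directly to locality as in \cref{rem:locality}.
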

These Wilson lines are lifts of those introduced in \cite{IO20}.

\begin{rem}
At first, the twisted Wilson lines $g^\mathrm{tw}_{[c]}$ might look more natural than $g_{[c]}$. However, it turns out that the Wilson lines $g_{[c]}$ are compatible with the positivity structure, especially with the cluster structure. The twisted ones have negative coefficients due to $-1$ contained in $\vw$. The Wilson lines are also defined so as to fit in well with the amalgamation (cf.~\cite[Proposition 3.27]{IO20}).
\end{rem}
In order to state the relation to the Wilson lines in \cite{IO20}, let $G'=G/Z(G)$ be the adjoint group, and recall the moduli space $\P_{G',\Sigma}$ of framed $G'$-local systems with pinnings \cite{GS19}.

\begin{prop}
For any framed arc class $[c]: E_1^{\epsilon_1} \to E_2^{\epsilon_2}$, the following diagram commutes:
\begin{equation*}
    \begin{tikzcd}
    \A_{G,\Sigma}^\times \ar[d,"p_\Sigma"'] \ar[r,"g_{[c]}"] & G \ar[d] \\
    \P_{G',\Sigma} \ar[r,"g_{[\overline{c}]}"'] & G'.
    \end{tikzcd}
\end{equation*}
Here $p_\Sigma: \A_{G,\Sigma}^\times \to \P_{G',\Sigma}$ denotes the projection given in \cite[Section 9.2]{GS19}, $G \to G'$ is the canonical projection, and $g_{[\overline{c}]}$ is the Wilson line along the arc class $[\overline{c}]:=\pi_\ast([c])$ \cite{IO20}.
\end{prop}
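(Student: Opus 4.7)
The plan is to prove commutativity by unwinding both constructions and observing that every ingredient in the definition of $g_{[c]}$ projects, via the group homomorphism $G\to G'$, to the corresponding ingredient in the definition of $g_{[\overline{c}]}$ from \cite{IO20}. The whole argument is therefore a compatibility check rather than a new computation.

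First I would recall the description of $p_\Sigma$. Given $(\L,\alpha)\in\A_{G,\Sigma}^\times$, one forms the adjoint $G'$-bundle $\L\times_{G}G'$. Since the central element $s_G\in Z(G)$ maps to $1\in G'$, the fiber monodromy constraint $\rho(\fiber)=s_G$ becomes trivial, and the resulting $G'$-local system on $T'\Sigma$ descends canonically to a $G'$-local system on $\Sigma$. Moreover, the pinnings $p_{E^-}$ (equivalently $p_{E^+}$) constructed in the previous subsection map under the induced projection $\P_G\to \P_{G'}$ to the boundary-interval pinnings of $p_\Sigma(\L,\alpha)\in \P_{G',\Sigma}$ used in \cite{GS19,IO20}. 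Correspondingly, the framed arc class $[c]$ in $T'\Sigma$ projects to the ordinary arc class $[\overline{c}]=\pi_\ast([c])$ in $\Sigma$, between the same boundary intervals $E_1,E_2$.

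Next I would follow the recipe for $g_{[c]}$ termwise. Choose a trivialization $s_1$ of $\L$ near $E_1^{\epsilon_1}$ so that $p_{E_1^{\epsilon_1}}=p_\std$, extend $s_1$ by parallel transport along $c$, and read off $g\in G$ from the identity $p_{E_2^{\epsilon_2}}=g\vw^{-1}.p_\std$. Applying $G\to G'$: the trivialization $s_1$ induces a trivialization of the adjoint bundle near $E_1$ that takes the projected boundary pinning to the standard pinning of $\P_{G'}$ (since $p_\std$ does), and the parallel transport along $c\subset T'\Sigma$ becomes the parallel transport along $\overline{c}\subset \Sigma$ by the descent explained above. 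The terminal pinning on the $\P_{G',\Sigma}$-side therefore reads $\overline{g}\,\overline{\vw}^{-1}.p_\std^{G'}$, where $\overline{g}$ is the image of $g$. By the definition of $g_{[\overline{c}]}$ in \cite[Proposition 3.25]{IO20}, this element equals $g_{[\overline{c}]}(p_\Sigma(\L,\alpha))$, proving the required commutativity.

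The main bookkeeping obstacle is to reconcile the distinction between $p_{E^+}$ and $p_{E^-}$, which is specific to the twisted setting, with the single boundary-interval pinning used in the $\P_{G',\Sigma}$-setting. The passages between $E^+$ and $E^-$ are implemented by parallel transport along $\sqrt{\fiber_E}^{\inn}$ or $\sqrt{\fiber_E}^{\out}$, and the associated monodromies lie in the center $Z(G)$ (for instance, $s_G=\vw^2$ itself); they therefore vanish in the adjoint quotient. Similarly the twist by $\vw^{-1}$ appearing here is absorbed into the identical twist present in the definition of $g_{[\overline{c}]}$. Once these cancellations are verified, the commutativity of the square reduces to the term-by-term translation performed above.
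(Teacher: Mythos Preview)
The paper does not actually provide a proof of this proposition; it is stated and immediately followed by the next result. Your approach is the natural one and is essentially what the authors would have in mind: verify term by term that each ingredient in the construction of $g_{[c]}$ (local trivialization, parallel transport, reading off the pinning against $p_\std^\ast$) projects under $G\to G'$ to the corresponding ingredient of $g_{[\overline{c}]}$ from \cite{IO20}, using that $s_G\in Z(G)$ maps to $1$ so that the twisted $G'$-local system on $T'\Sigma$ descends to an honest $G'$-local system on $\Sigma$.

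One point to tighten. You write that the passages along $\sqrt{\fiber_E}^{\inn}$ or $\sqrt{\fiber_E}^{\out}$ have ``associated monodromies in the center $Z(G)$''. As half-loops these do not carry a well-defined holonomy element, and the \emph{Wilson lines} along them land in $H$, not in $Z(G)$ (see \cref{prop:boundary condition}). The correct statement is the one you had just before: since $s_G\mapsto 1$ in $G'$, the descended local system lives on $\Sigma$, and under this descent the half-fiber paths collapse to constant paths at $x_E$. What then remains is to check that the projected pinnings $p_{E^\epsilon}$ match the boundary-interval pinnings of \cite{IO20}; note that $p_{E^+}$ and $p_{E^-}$ project to \emph{opposite} pinnings over $x_E$ (one is obtained from the other by swapping the roles of the two decorated flags), so you must match the sign $\epsilon$ to the orientation convention used there. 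Once that bookkeeping is carried out, your argument is complete.
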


The twisted Wilson lines contain the Wilson loops in the following sense:

\begin{prop}\label{lem:Wilson_loop}
For a free loop $|\gamma|$ on $T'\Sigma$, choose a representative $c$ based at $E^\epsilon$ for some $E \in \bB$ and $\epsilon \in \{+,-\}$, whose based homotopy class defines a framed arc class $[c]:E^\epsilon \to E^\epsilon$. Then the twisted Wilson line $g_{[c]}^\mathrm{tw}$ represents the Wilson line $\rho_{|\gamma|}$, namely, the following diagram commutes:
\begin{equation*}
    \begin{tikzcd}
    \A_{G,\Sigma}^\times \ar[r,"g_{[c]}^\mathrm{tw}"] \ar[rd,"\rho_{|\gamma|}"'] & G \ar[d] \\
     & {[G/\Ad G]}.
     \end{tikzcd}
\end{equation*}
Here $G \to [G/\Ad G]$ denotes the canonical projection. 
\end{prop}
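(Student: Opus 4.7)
The plan is to unpack the definitions of the two maps in parallel and observe that, for a loop $c$, the twisted Wilson line $g_{[c]}^{\mathrm{tw}}$ recovers exactly the monodromy of the twisted $G$-local system around $c$, computed in the trivialization used in the construction of $g_{[c]}$. Since $\rho_{|\gamma|}$ is defined as the $\Ad G$-conjugacy class of that same monodromy, commutativity follows after projecting to $[G/\Ad G]$.

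In detail, fix $[\L,\alpha]\in \A_{G,\Sigma}^\times$ and a loop $c$ based at $E^{\epsilon}$. By construction of $g_{[c]}$, I take the unique local trivialization $s_1$ of $\L$ near $E^{\epsilon}$ for which $p_{E^\epsilon}=p_\std$; uniqueness is because the generic pinning $p_\std$ has trivial stabilizer in $G$. I then parallel-transport $s_1$ along $c$ to obtain a second trivialization $s_c$ in the same fibre, and the defining equation is
\begin{equation*}
p_{E^\epsilon}=g_{[c]}.p_\std^{\ast}=g_{[c]}\vw^{-1}.p_\std \quad\text{in the trivialization } s_c.
\end{equation*}
On the other hand, by definition of $\rho_{|\gamma|}$, the holonomy of $\L$ along $c$, measured in $s_1$, is the element $\mu\in G$ such that $s_c$ and $s_1$ agree after acting by $\mu$; hence the intrinsic pinning $p_{E^\epsilon}$ is read as $\mu.p_\std$ in $s_c$. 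Comparing the two expressions and using triviality of the stabilizer of $p_\std$ yields $g_{[c]}\vw^{-1}=\mu$, i.e.\ $g_{[c]}^{\mathrm{tw}}([\L,\alpha])=\mu$, whose image in $[G/\Ad G]$ is by definition $\rho_{|\gamma|}([\L,\alpha])$.

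The main thing to be careful about is fixing the left/right convention for parallel transport consistently with the definition of $g_{[c]}$, so that the holonomy appears with the correct sign rather than as $\mu^{-1}$; this is pure bookkeeping and is the same convention already used in \cite[Proposition 3.25]{IO20}. One should also verify that different choices of representative loop $c$ for the free homotopy class $|\gamma|$ (different basepoints $E^\epsilon$, different auxiliary initial trivializations) change $g_{[c]}^{\mathrm{tw}}$ only by $\Ad G$-conjugation, which is immediate from the fact that replacing $s_1$ by another trivialization $s_1.h$ ($h\in G$) simultaneously conjugates the monodromy element by $h$; this ambiguity is precisely what is killed upon passing to the adjoint quotient $[G/\Ad G]$, so the diagram commutes.
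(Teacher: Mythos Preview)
The paper does not supply a proof of this proposition; it is stated and then the text moves directly on to the next proposition (internal multiplicativity). Your argument is a correct and clean unpacking of the definitions: the key observation that in the trivialization $s_c$ the intrinsic pinning reads as $\mu.p_\std$ while the Wilson-line normalization reads it as $g_{[c]}\vw^{-1}.p_\std$, together with free transitivity of $G$ on pinnings, gives exactly $g_{[c]}^{\mathrm{tw}}=\mu$. This is presumably what the authors regard as immediate, so your approach is essentially the intended one made explicit.
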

 

\begin{prop}[Internal multiplicativity]\label{prop:multiplicativity}
For any framed arc classes $[c_1]:E_1^{\epsilon_1} \to E_2^{\epsilon_2}$ and $[c_2]:E_2^{\epsilon_2} \to E_3^{\epsilon_3}$, we have
\begin{align*}
    g_{[c_1]\ast[c_2]}^\mathrm{tw}=g_{[c_1]}^\mathrm{tw} g_{[c_2]}^\mathrm{tw} \quad \text{or equivalently,} \quad g_{[c_1]\ast[c_2]}=g_{[c_1]}\vw^{-1} g_{[c_2]}.
\end{align*}
In other words, for any point $[\L,\alpha] \in \A_{G,\Sigma}^\times$, the twisted Wilson lines $g^\mathrm{tw}_\bullet([\L,\alpha])$ defines a morphism $\Pi_1(T'\Sigma,\bB^\pm) \to G$ of groupoids.
\end{prop}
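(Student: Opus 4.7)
The plan is to unfold the definition of the Wilson line in terms of local trivializations and to track how these trivializations transform under the concatenation of paths. Fix a point $[\L,\alpha] \in \A_{G,\Sigma}^\times$ and abbreviate $g_i:=g_{[c_i]}([\L,\alpha])$ for $i=1,2$, as well as $g:=g_{[c_1]\ast[c_2]}([\L,\alpha])$. The goal is to verify the identity $g = g_1\vw^{-1}g_2$, from which the twisted version $g^{\mathrm{tw}}=g_1^{\mathrm{tw}}g_2^{\mathrm{tw}}$ follows by right-multiplication by $\vw^{-1}$.

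First I would choose a local trivialization $s_1$ of $\L$ near $E_1^{\epsilon_1}$ normalized so that the pinning $p_{E_1^{\epsilon_1}}$ is represented by $p_\std$. Parallel-transporting $s_1$ along $c_1$ yields a trivialization $s_1^{(2)}$ near $E_2^{\epsilon_2}$ in which, by the very definition of the Wilson line, $p_{E_2^{\epsilon_2}}$ is represented by $g_1\vw^{-1}.p_\std = g_1.p_\std^\ast$. Continuing the transport along $c_2$ produces a trivialization $s_1^{(3)}$ near $E_3^{\epsilon_3}$; since parallel transport along $c_1\ast c_2$ is by construction the composition of the transports along $c_1$ and $c_2$, the definition of $g_{[c_1]\ast[c_2]}$ tells us that $p_{E_3^{\epsilon_3}}$ is represented by $g\vw^{-1}.p_\std$ in $s_1^{(3)}$.

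The heart of the argument is a change of trivialization at $E_2^{\epsilon_2}$. I would introduce a second trivialization $s_2:=s_1^{(2)}\cdot (g_1\vw^{-1})$ there, chosen so that $p_{E_2^{\epsilon_2}}$ is represented by $p_\std$ in $s_2$, and then parallel-transport $s_2$ along $c_2$. Because parallel transport in a principal $G$-bundle commutes with the right $G$-action, one obtains $s_2^{(3)}=s_1^{(3)}\cdot (g_1\vw^{-1})$ at $E_3^{\epsilon_3}$. By the definition of $g_{[c_2]}$, the pinning $p_{E_3^{\epsilon_3}}$ is represented by $g_2\vw^{-1}.p_\std$ in $s_2^{(3)}$, which after converting back through the change of trivialization corresponds to the representation $g_1\vw^{-1}g_2\vw^{-1}.p_\std$ in $s_1^{(3)}$. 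Uniqueness of the $G$-element representing a pinning now forces $g\vw^{-1}=g_1\vw^{-1}g_2\vw^{-1}$, i.e.\ $g=g_1\vw^{-1}g_2$, which is the claimed identity.

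For the groupoid morphism statement, the only further check is that the constant path at $E^\epsilon$ goes to the identity: since parallel transport is trivial along it, $p_{E^\epsilon}$ is represented by $p_\std=\vw\vw^{-1}.p_\std$, so $g_{[\mathrm{id}_{E^\epsilon}]}=\vw$ and hence $g^{\mathrm{tw}}_{[\mathrm{id}_{E^\epsilon}]}=e$; combined with multiplicativity, this also yields $g^{\mathrm{tw}}_{[c]^{-1}}=(g^{\mathrm{tw}}_{[c]})^{-1}$. The main conceptual subtlety, and the only point that genuinely requires care, is to keep straight the left $G$-action on pinnings versus the right $G$-action on the trivializing sections, and to use that the latter is preserved under parallel transport; once this bookkeeping is set up correctly, the identity is essentially forced by the definitions, in the same spirit as \cite[Proposition 3.25]{IO20}.
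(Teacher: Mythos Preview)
Your proof is correct and follows essentially the same approach as the paper: both arguments normalize the initial pinning to $p_\std$, change trivialization at the intermediate point $E_2^{\epsilon_2}$ so that the second pinning becomes $p_\std$, read off $g_2$ from the terminal pinning, and convert back. The only cosmetic difference is that the paper works directly with the twisted Wilson lines $g_i^{\mathrm{tw}}$ (so the change of trivialization is by $g_1^{\mathrm{tw}}$ rather than $g_1\vw^{-1}$), and it omits the identity-morphism check you added at the end.
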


\begin{proof}
Given $[\L,\alpha] \in \A_{G,\Sigma}^\times$, let us prove
\begin{align*}
    g^\mathrm{tw}_{[c_1]\ast[c_2]}([\L,\alpha])=g^\mathrm{tw}_{[c_1]}([\L,\alpha])g^\mathrm{tw}_{[c_2]}([\L,\alpha]).
\end{align*}
Take a local trivialization $s_1$ of $\L$ so that $(p_{E_1},p_{E_2})=(p_\std,g_1.p_\std)$, where $g_1:=g^\mathrm{tw}_{[c_1]}([\L,\alpha])$. Under the local trivialization $s_2:=s_1.g_1$, the sections are given by $(p_{E_1},p_{E_2},p_{E_3})=(g_1^{-1}.p_\std,p_\std,g_2.p_\std)$, where $g_2=g^\mathrm{tw}_{[c_2]}([\L,\alpha])$ by definition. Going back to the first trivialization, we get $(p_{E_1},p_{E_3})=(p_\std,g_1g_2.p_\std)$, and hence $g_1g_2=g^\mathrm{tw}_{[c_1]\ast[c_2]}([\L,\alpha])$.
\end{proof}

\begin{rem}\label{rem:oppsite_Wilson_lines}
As a special case of \cref{prop:multiplicativity}, we have 
\begin{align*}
    g_{[c]^{-1}} = \vw g_{[c]}^{-1}\vw = (g_{[c]}^\mathsf{T})^\ast \cdot s_G.
\end{align*}
In view of \cref{rem:opposite_arc} and $g^\mathrm{tw}_{\fiber_{E}}=s_G$, we have $g_{[c^\mathrm{op}]} = (g_{[c]}^\mathsf{T})^\ast$, which is a relation preserving the positivity. 
\end{rem}

\begin{rem}\label{rem:locality}
\begin{enumerate}
    \item Let $(\Sigma_0,\bM_0) \subset (\Sigma,\bM)$ be a sub-marked surface, by which we mean an embedding $\Sigma_0 \subset \Sigma$ which restrict to $\bM_0 \subset \bM$. Then we have an obvious restriction morphism $\mathrm{res}:\A_{G,\Sigma} \to \A_{G,\Sigma_0}$. Suppose that a framed arc class $[c]:E_1^{\epsilon_1}\to E_2^{\epsilon_2}$ on $\Sigma$ admits a representative contained in $T'\Sigma_0$ and $E_1, E_2$ are also boundary intervals of $\Sigma_0$. Then $[c]$ can be regarded as a morphism in $\Pi_1(T'\Sigma_0;\bB_0^{\pm})$, and the Wilson line $g_{[c]}$ factors through $\A_{G,\Sigma_0}^\times$:
    \begin{equation*}
        \begin{tikzcd}
        \A_{G,\Sigma}^\times \ar[r,"\mathrm{res}"] \ar[d,"g_{[c]}"'] & \A_{G,\Sigma_0}^\times \ar[d,"g_{[c]}"] \\
        G \ar[r,equal] & G.
        \end{tikzcd}
    \end{equation*}
    \item For a subset $\Xi \subset \bB$, let $\A_{G,\Sigma;\Xi}^\times \subset \A_{G,\Sigma}$ denote the open substack consisting of decorated twisted $G$-local systems such that the pair of flat sections associated to each $E \in \Xi$ is generic. We have $\A_{G,\Sigma;\emptyset}^\times=\A_{G,\Sigma}$ and $\A_{G,\Sigma;\bB}^\times=\A_{G,\Sigma}^\times$. Then the Wilson line along a framed arc class $[c]:E_1^{\epsilon_1}\to E_2^{\epsilon_2}$ can be defined on $\A_{G,\Sigma;\Xi}^\times$ for any subset $\Xi$ containing $E_1$ and $E_2$, which makes the following diagram commute:
    \begin{equation*}
        \begin{tikzcd}
        \A_{G,\Sigma}^\times \ar[r,"\mathrm{incl}"] \ar[d,"g_{[c]}"'] & \A_{G,\Sigma;\Xi}^\times \ar[d,"g_{[c]}"] \\
        G \ar[r,equal] & G.
        \end{tikzcd}
    \end{equation*}
\end{enumerate}
\end{rem}

\begin{dfn}
We give some special names to certain Wilson lines, as follows.
\begin{enumerate}
    \item For $E \in \mathbb{B}$, we call the $g_{\sqrt{\fiber_E}^\inn}$ and $g_{\sqrt{\fiber_E}^\out}$ the \emph{boundary Wilson lines} along $E$. 
    \item For $m \in \bM$, let $[c_m]:E_1^-\to E_2^+$ denote the framed arc class as shown in \cref{fig:corner_Wilson_line} around $m$. Then we call $g_{[c_m]}$ the \emph{corner Wilson line} around $m$.
    \item We call the Wilson lines along simple framed arc classes the \emph{simple Wilson lines}. 
\end{enumerate}
\end{dfn}

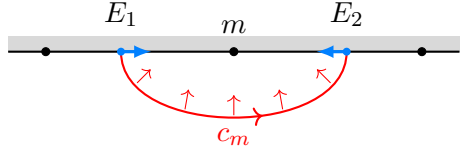
\begin{figure}[ht]
\begin{tikzpicture}
\fill[gray!30] (-3,0) -- (3,0) --(3,0.2) -- (-3,0.2) --cycle;
\draw[thick] (-3,0) -- (3,0);
\filldraw (0,0) circle(1.5pt) node[above=0.3em]{$m$};
\filldraw (-2.5,0) circle(1.5pt);
\filldraw (2.5,0) circle(1.5pt);
\draw[red,thick,->-={0.6}{}] (-1.5,0) to[out=-90,in=-90] node[midway,below]{$c_m$} 
node[pos=0.15,inner sep=0](A){}
node[pos=0.35,inner sep=0](B){} node[pos=0.5,inner sep=0](C){} node[pos=0.65,inner sep=0](D){} node[pos=0.85,inner sep=0](E){}
(1.5,0);
{\color{red}
\tangent{A}{45}{0.3};
\tangent{B}{80}{0.3};
\tangent{C}{90}{0.3};
\tangent{D}{100}{0.3};
\tangent{E}{135}{0.3};
}
\node at (-1.5,0.5) {$E_1$};
\node at (1.5,0.5) {$E_2$};
\fill[myblue] (-1.5,0) circle(1.5pt);
\draw[myblue,very thick,-latex] (-1.5,0) --++(0.4,0);
\fill[myblue] (1.5,0) circle(1.5pt);
\draw[myblue,very thick,-latex] (1.5,0) --++(-0.4,0);
\end{tikzpicture}
    \caption{The corner Wilson line around $m \in \bM$.}
    \label{fig:corner_Wilson_line}
\end{figure}

Recall the angle invariant \eqref{eq:angle_inv}. 
For a marked point $m \in \bM$, let $T_m$ be the unique triangle which contains $m$ as a vertex and the two boundary intervals $E_1,E_2$ incident to $m$ as its sides. Then by \cref{rem:locality} above, we have $\A_{G,\Sigma}^\times \xrightarrow{\mathrm{res}} \A^\times_{G,T_m;\{E_1,E_2\}} \cong \Conf(\A_G,\B_G,\B_G)$. Here the latter isomorphism is obtained by parallel-transport of flags through the interval $\iota_\out(\overline{E_1\cap E_2})$, or equivalently, with respect to the branch-cut on $T_m$ intersecting the opposite side of $m$. Then we get the angle invariant $\mathrm{an}_m:\A_{G,\Sigma}^\times \to U^+$ associated with $m$ as the pull-back of the angle invariant via this morphism. 

\begin{prop}\label{prop:boundary condition}
\begin{enumerate}
    \item For $E \in \bB$, the boundary Wilson line gives the edge invariants $g_{\sqrt{\fiber_E}^{\inn}}=h(\flA^-_E,\flA^+_E)^{-1}$ and $g_{\sqrt{\fiber_E}^{\out}}=(h(\flA^+_E,\flA^-_E)^\ast)^{-1}$, where $\flA^\pm_E$ is the decorated flag assigned to $m_E^\pm$. 
    \item For $m \in \bM$, the corner Wilson line gives the angle invariant $g_{[c_m]}=\mathrm{an}_m\cdot \vw$ twisted by $\vw$.
\end{enumerate}
\end{prop}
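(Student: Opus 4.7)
The plan is to compute each Wilson line in a carefully chosen trivialization of the twisted $G$-local system and match the result to the claimed invariant via a Bruhat-style computation.

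For part (1), by $G$-equivariance I may choose a trivialization of $\L$ near $E^-$ making $p_{E^-} = p_\std = ([U^+], B^-)$. Then $\flA_E^+$ lies over $B^-$, so $\flA_E^+ = \vw h_E.[U^+]$ for a unique $h_E \in H$, and a direct Bruhat calculation identifies $h(\flA_E^-, \flA_E^+) = \vw h_E^{-1} \vw^{-1}$ and $h(\flA_E^+, \flA_E^-) = s_G h_E$. Extending the trivialization via parallel-transport along $\sqrt{\fiber_E}^{\inn}$ to $E^+$, the $\sqrt{\fiber_E}^{\inn}$-transport of $(\flA_E^-, \flA_E^+)$ retains the coordinates $([U^+], \vw h_E.[U^+])$, while the defining pair $(\hat{\flA}_E^-, \hat{\flA}_E^+)$ for $p_{E^+}$ is the $\sqrt{\fiber_E}^{\out}$-transport and hence differs by the fiber monodromy $s_G$ along the loop $\fiber_{E^+} = (\sqrt{\fiber_E}^{\out})^{-1} \ast \sqrt{\fiber_E}^{\inn}$. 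Substituting into $p_{E^+} = g\vw^{-1}.p_\std$ and solving via Bruhat yields $g = \vw h_E \vw^{-1} = h(\flA_E^-, \flA_E^+)^{-1}$. The analogous computation for $\sqrt{\fiber_E}^{\out}$ has no $s_G$-twist in the extended frame but produces one in $g$; one then checks that $\vw h_E \vw^{-1}\cdot s_G = (h(\flA_E^+, \flA_E^-)^\ast)^{-1}$ using $h^\ast = \vw h^{-1}\vw^{-1}$ for $h \in H$.

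For part (2), I would use the restriction-and-isomorphism $\A_{G,\Sigma}^\times \xrightarrow{\mathrm{res}} \A_{G,T_m;\{E_1,E_2\}}^\times \cong \Conf(\A_G, \B_G, \B_G)$ to fix a canonical trivialization in which $(\flA_m, \pi(\flA_{m_1}), \pi(\flA_{m_2})) = ([U^+], B^-, u B^-)$ with $u = \mathrm{an}_m$. Lifting the flags at $m_1, m_2$ to decorations gives $\vw h_1.[U^+]$ and $u\vw h_2.[U^+]$ for some $h_1, h_2 \in H$. Since the canonical trivialization is constructed by parallel-transport through $\iota_\out(\partial T_m)$ in a neighborhood of $m$, and the corner arc $c_m$ is the standard lift of a transverse immersion from $E_1$ to $E_2$ through the interior of $T_m$, a homotopy argument in $T'T_m$ shows that the canonical trivialization at $E_2^+$ coincides with the trivialization obtained by extending from $E_1^-$ along $c_m$. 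In this common frame, $p_{E_1^-} = p_\std$ and $p_{E_2^+} = ([U^+], u B^-)$. Setting $p_{E_2^+} = g\vw^{-1}.p_\std = (gs_G.[U^-], g.[B^+])$ forces $g \in U^+\vw$ (from the decorated-flag condition $gs_G.[U^-] = [U^+]$) and simultaneously $g \in u\vw B^+$ (from the flag condition $g.[B^+] = u\vw.[B^+]$), whose unique common solution is $g = u\vw = \mathrm{an}_m\cdot\vw$.

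The main subtlety I anticipate is the homotopy verification in part (2): one must check that no hidden $s_G$ monodromy is introduced when identifying the two trivializations at $E_2^+$. This reduces to showing that the concatenation of the canonical path from the basepoint to $E_1^-$, the arc $c_m$, and the reverse of the canonical path from $E_2^+$ is null-homotopic in $T'T_m$ (equivalently, has winding number zero in the fiber direction). This coherence is precisely what the standard-lift convention for $c_m$, together with the construction of the canonical trivialization, is designed to guarantee.
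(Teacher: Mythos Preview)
Your proposal is correct and takes essentially the same approach as the paper: both reduce to an explicit Bruhat computation in a chosen trivialization, with the paper packaging parts (1) and (2) together into the single triangle computation of \cref{ex:triangle_case} via the branch-cut convention, while you treat the edge case directly and make the $s_G$-bookkeeping and the homotopy verification for the corner arc explicit. The homotopy coherence you flag as the main subtlety in part (2) is precisely what the branch-cut trivialization in \cref{ex:triangle_case} handles implicitly.
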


\begin{proof}
In view of \cref{rem:locality} (1)(2), the proof reduces to the local computations in \cref{ex:triangle_case} below.  
\end{proof}

\begin{ex}\label{ex:triangle_case}
Let us consider the case $\Sigma=\mathbb{D}_3$, and choose an isomorphism $\A_{G,\mathbb{D}_3} \cong \Conf_3 \A_G$ as described in \cref{ex:polygon}, corresponding to the branch cut shown in \cref{fig:triangle_case}. The boundary intervals are denoted by $E_{1}, E_{2},E_{3}$ as shown there.  
Notice that an arbitrary $G$-orbit in $\Conf_3^\times \A_G$ has a unique representative of the form
\begin{align}\label{eq:standard_config}
    (\flA_1,\flA_2,\flA_3) = ([U^+],\vw^{-1}h_1.[U^+],u_+h_2^{-1}\vw.[U^+])
\end{align}
for $h_1,h_2 \in H$ and $u_+ \in U^+_\ast$. In this parametrization, we have $h_1=h(\flA_2,\flA_1)$, $h_2=h(\flA_1,\flA_3)$ and $u_+=\mathrm{an}_m$.
This parametrization can be extended to a parametrization of triples $(\flA_1,\flA_2,\flA_3)$ such that the pairs $(\flA_1,\flA_2)$ and $(\flA_1,\flA_3)$ are generic by $(h_1,h_2,u_+) \in H \times H \times U^+$. 
Let $\flB_i:=\pi(\flA_i)$ denote the underlying flag for $i=1,2,3$. 
Then we can compute the associated pinnings as
\begin{align*}
    &p_{E_{1}}^-=(\flA_1,\flB_2) = p_\std, \\
    &p_{E_{1}}^+=(\flA_2,\flB_1) = \vw^{-1}h_1.p_\std, \\
    &p_{E_{2}}^+=(\flA_1,\flB_3) = u_+.p_\std, \\
    &p_{E_{2}}^-=(\flA_3,\flB_1) = u_+h_2^{-1}\vw.p_\std
\end{align*}
Using the relation $p_\std=\vw.p_\std^\ast$, we get:
\begin{itemize}
    \item $g_{\sqrt{\fiber_{E_1}}^\out}=w_0(h_1)=(h(\flA_2,\flA_1)^\ast)^{-1}$, which implies that $g_{\sqrt{\fiber_{E_1}}^\inn}=w_0(h_1)s_G=h(\flA_1,\flA_2)^{-1}$.
    \item $g_{[c_m]}=u_+\vw=\mathrm{an}_m\cdot \vw$,
\end{itemize}
from which one can confirm the statements in \cref{prop:boundary condition}. Also note that the framed arc class $[c]:=[c_m]\ast [\sqrt{\fiber_{E_2}}^\out]^{-1}:E_1^- \to E_2^-$ is simple, and that $g_{[c]}=u_+h_2^{-1}s_G$.
\end{ex}

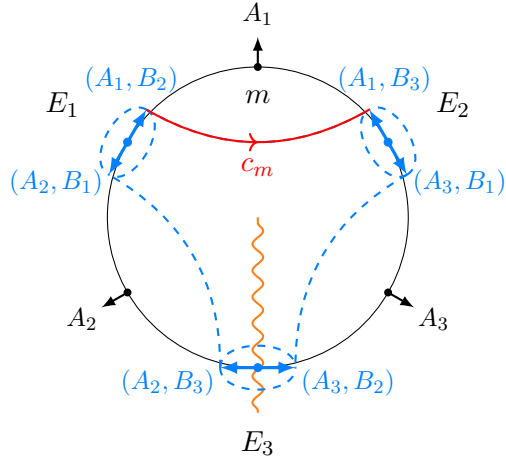
\begin{figure}[ht]
\begin{tikzpicture}
\draw (0,0) circle(2cm);
\node at (0,1.6) {$m$};
\draw [myorange,thick,decorate,decoration={snake,amplitude=2pt,pre length=2pt,post length=0pt}](0,0) -- (0,-2.6);
\foreach \i in {1,2,3}
{
\fill (\i*120-30:2) circle(1.5pt);
\draw[thick,-latex] (\i*120-30:2) --++(\i*120-30:0.4);
\draw[myblue,fill] (\i*120+30:2) circle(1.5pt);
\draw[myblue,very thick,-latex] (\i*120+30:2) --++(\i*120-60:0.5) coordinate(A\i);
\draw[myblue,very thick,-latex] (\i*120+30:2) --++(\i*120+120:0.5)coordinate(B\i); 
} 
\draw[myblue,thick,dashed] (A1) to[out=-30,in=210] (B3); 
\draw[myblue,thick,dashed] (A2) to[out=90,in=-30] (B1); 
\draw[myblue,thick,dashed] (A3) to[out=210,in=90] (B2); 
\draw[myblue,thick,dashed] (A1) to[out=-30,in=-30] (B1); 
\draw[myblue,thick,dashed] (A1) to[out=150,in=150] (B1); 
\draw[myblue,thick,dashed] (A2) to[out=90,in=90] (B2); 
\draw[myblue,thick,dashed] (A2) to[out=-90,in=-90] (B2); 
\draw[myblue,thick,dashed] (A3) to[out=210,in=210] (B3); 
\draw[myblue,thick,dashed] (A3) to[out=30,in=30] (B3); 
\foreach \i in {2,3}
\node[scale=0.9] at (\i*120-30:2.7) {$A_\i$};
\node[scale=0.9] at (90:2.7) {$A_1$};
\draw[myblue] (A1)++(-0.2,0.4) node[scale=0.9]{$(A_1,B_2)$};
\draw[myblue] (B1)++(-0.7,-0.1) node[scale=0.9]{$(A_2,B_1)$};
\draw[myblue] (B3)++(0.2,0.4) node[scale=0.9]{$(A_1,B_3)$};
\draw[myblue] (A3)++(0.7,-0.1) node[scale=0.9]{$(A_3,B_1)$};
\draw[myblue] (A2)++(-0.7,-0.2) node[scale=0.9]{$(A_2,B_3)$};
\draw[myblue] (B2)++(0.7,-0.2) node[scale=0.9]{$(A_3,B_2)$};
\draw[red,thick,->-] (A1) to[out=-30,in=210] node[midway,below=0.2em]{$c_m$} (B3);
\node at (150:3) {$E_{1}$};
\node at (270:3) {$E_{3}$};
\node at (30:3) {$E_{2}$};
\end{tikzpicture}
    \caption{Computation of Wilson lines on a triangle. Here the framed arc classes giving rise to boundary and corner Wilson lines are shown in dashed blue lines.}
    \label{fig:triangle_case}
\end{figure}
By \cref{lem:groupoid_generator}, $\Hom(\Pi_1(T'\Sigma,\bB^\pm),G)$ has a natural structure of affine variety. 

\begin{thm}
The morphism
\begin{align*}
    \A^\times_{G,\Sigma} \to \Hom\left(\Pi_1(T'\Sigma,\bB^\pm),G\right), \quad [\L,\alpha] \mapsto g^\mathrm{tw}_\bullet([\L,\alpha])
\end{align*}
is a closed embedding. The image is characterized by the conditions 
\begin{align*}
    g^\mathrm{tw}_{\fiber_{E^\pm}}=s_G,\quad g^\mathrm{tw}_{\sqrt{\fiber_E}^\inn} \in H\vw^{-1}, \quad g^\mathrm{tw}_{[c_m]} \in U^+
\end{align*}
for $E \in \mathbb{B}$ and $m \in \bM$.
\end{thm}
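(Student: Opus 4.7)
The plan is to prove three things: (i) the map lands in the affine variety $\Hom(\Pi_1(T'\Sigma,\bB^\pm),G)$ and the listed conditions are necessary, (ii) any homomorphism satisfying these conditions arises from an element of $\A^\times_{G,\Sigma}$, and (iii) the resulting bijection onto the cut-out locus is regular with regular inverse. Together these give a closed embedding.

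For (i), well-definedness as a morphism of varieties follows from internal multiplicativity \cref{prop:multiplicativity}, while \cref{lem:groupoid_generator} shows that $\Hom(\Pi_1(T'\Sigma,\bB^\pm),G)$ is cut out inside $G^{|\sfS|}$ by the finitely many composition relations among a finite generating set $\sfS$, and hence is an affine variety. The necessity of the three conditions is immediate: the fiber-loop equation is the defining twisting property of $\L$, whereas the boundary and corner conditions are translations of \cref{prop:boundary condition} via $g^\mathrm{tw}_{[c]}=g_{[c]}\vw^{-1}$, using that $h(\flA^-_E,\flA^+_E)\in H$ and $\mathrm{an}_m\in U^+$.

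To construct the inverse, choose $\sfS$ as in the second half of \cref{lem:groupoid_generator}: simple framed arcs $[c_E]:E_0^-\to E^-$ (one for each $E\ne E_0$) from a fixed reference interval $E_0$, together with the elementary fiber arcs at every $E\in\bB$. Given $\rho$ satisfying the three conditions, define pinnings by $p_{E_0^-}:=p_\std$ and $p_{E^-}:=\rho([c_E]).p_\std$ for $E\ne E_0$, and define a $G$-local system on $T'\Sigma$ using the monodromies $\rho$ along the non-fiber generators. The condition $\rho(\sqrt{\fiber_E}^\inn)\in H\vw^{-1}$ forces the two decorated flags at the endpoints of $E$ to form a pinning along $E$, and $\rho([c_m])\in U^+$ forces the flags at each marked point $m$ to be generically positioned, placing the reconstruction inside $\A^\times_{G,\Sigma}$ rather than the larger stack $\A_{G,\Sigma}$. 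The fiber-loop condition ensures the underlying local system is twisted. By the $\P$-type description $\A^\times_{G,\Sigma}\cong\P_{G,\Sigma}$ recalled in the remark following the definition of pinnings, in the absence of punctures these data uniquely assemble into a generically decorated twisted $G$-local system.

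The main technical obstacle will be to verify that this reconstruction yields a regular inverse on the cut-out subvariety, inverse to the Wilson-line map. For well-definedness, two different choices of simple representatives $[c_E]$ differ by a loop at $E_0^-$, and one uses the full list of groupoid relations among the generators in $\sfS$ together with $\rho(\fiber_{E_0^-})=s_G$ to show that any two such choices yield isomorphic points in $\A^\times_{G,\Sigma}$. Regularity of the inverse reduces to the observation that all the operations above—group multiplication in $G$, the action on $p_\std$, and clutching of local trivializations—are polynomial in the entries of $\rho([c])$ for $[c]\in\sfS$. Once these checks are in place, the Wilson-line map is a regular bijection of affine varieties with regular inverse between $\A^\times_{G,\Sigma}$ and the closed subvariety cut out by the three conditions, and is therefore a closed embedding.
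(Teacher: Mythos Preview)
Your approach is essentially the same as the paper's (much terser) argument: reconstruct the monodromy of $\L$ from the twisted Wilson lines via \cref{lem:Wilson_loop}, read off the decoration at each marked point from a Wilson line to an adjacent edge, and observe that the three listed conditions are closed. Your write-up is in fact more detailed than what the paper provides.

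One point needs correcting. You attribute to the corner condition $g^{\mathrm{tw}}_{[c_m]}\in U^+$ the role of ``forcing the flags at each marked point $m$ to be generically positioned''. This is not what that condition does: genericity along each boundary interval is already automatic once you define the pinnings as $p_{E^-}=\rho([c_E]).p_\std$, since $p_\std$ is a pinning and $G$ acts on $\P_G$. What the corner condition actually ensures is \emph{consistency of the decoration at $m$}: in the local picture of \cref{ex:triangle_case}, under the trivialization with $p_{E_1^-}=p_\std=([U^+],B^-)$, the condition $g^{\mathrm{tw}}_{[c_m]}=u_+\in U^+$ gives $p_{E_2^+}=u_+.p_\std=([U^+],u_+.B^-)$, so the decorated-flag components of $p_{E_1^-}$ and $p_{E_2^+}$ agree, and a single well-defined $\flA_m$ exists. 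Similarly, the boundary condition $g^{\mathrm{tw}}_{\sqrt{\fiber_E}^{\inn}}\in H\vw^{-1}$ ensures that $p_{E^-}$ and $p_{E^+}$ are the two pinnings coming from one generic pair $(\flA_E^-,\flA_E^+)$, so that the underlying flag in $p_{E^-}$ matches $\pi(\flA_E^+)$. With these roles corrected, your consistency check for the reconstruction goes through. Also note that your description of $\sfS$ as ``one simple arc $[c_E]$ per $E\neq E_0$ plus fiber arcs'' understates the content of \cref{lem:groupoid_generator}: you also need the simple arcs obtained by bisecting the generating loops of $\pi_1(\Sigma,x_{E_0})$, without which $\rho$ restricted to your $\sfS$ does not determine the monodromy.
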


\begin{proof}
Let us prove that a decorated twisted $G$-local system can be characterized by its (twisted) Wilson lines. Fix a basepoint $E_0^{\epsilon_0}$ for some $E_0 \in \bB$ and $\epsilon \in \{+,-\}$. Then by \cref{lem:Wilson_loop}, the monodromy homomorphism $\pi_1(T'\Sigma,E_0^{\epsilon_0}) \to G$ of the twisted $G$-local system can be reconstructed from the twisted Wilson lines. For a marked point $m \in \bM$, choose an object $E^{\epsilon}$, where $E \in \bB$ is a boundary interval incident to $m$ and $E^{\epsilon}$ is the one pointing to $m$ among $E^\pm$. Then the decoration assigned to $m$ can be read off from the Wilson line along a framed arc class of the form $[c]: E_0^{\epsilon_0} \to E^{\epsilon}$. Since the image is characterized by closed conditions, the assertion is proved. 
\end{proof}

Note that the (twisted) Wilson lines define ring homomorphisms
\begin{align*}
    g_{[c]}^*: \cO(G) \to \cO(\A_{G,\Sigma}^\times), \quad (g_{[c]}^\mathrm{tw})^*: \cO(G) \to \cO(\A_{G,\Sigma}^\times).
\end{align*}
For a matrix coefficient $c_{f,v}^V \in \cO(G)$, we write $c_{f,v}^V(g_{[c]}):=g_{[c]}^*(c_{f,v}^V)$ and $c_{f,v}^V(g_{[c]}^\mathrm{tw}):=(g_{[c]}^\mathrm{tw})^*(c_{f,v}^V)=c_{f,\vw.v}^V(g_{[c]})$. 

Combining the previous theorem with \cref{lem:groupoid_generator}, we get:

\begin{cor}\label{cor:generation_by_Wilson_lines}
The function ring $\cO(\A_{G,\Sigma}^\times)$ is generated by the matrix coefficients of Wilson lines. Moreover if $\Sigma$ has at least two marked points, then $\cO(\A_{G,\Sigma}^\times)$ is generated by matrix coefficients of simple Wilson lines and boundary Wilson lines.
\end{cor}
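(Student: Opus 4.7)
The plan is to combine the closed embedding theorem just proved with the finite-generation statement of \cref{lem:groupoid_generator} and the Peter--Weyl decomposition of $\cO(G)$. First, I would use the theorem to realise $\A_{G,\Sigma}^\times$ as a closed subvariety of $\Hom(\Pi_1(T'\Sigma,\bB^\pm),G)$ via the twisted Wilson line map. By \cref{lem:groupoid_generator} the groupoid admits a finite generating set $\mathsf{S}$ of framed arc classes, and evaluating a groupoid morphism on $\mathsf{S}$ realises $\Hom(\Pi_1(T'\Sigma,\bB^\pm),G)$ as a closed subvariety of $G^{|\mathsf{S}|}$, cut out by translating the defining relations of the groupoid into identities in $G$. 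Composing the two embeddings, $\A_{G,\Sigma}^\times$ becomes a closed subvariety of $G^{|\mathsf{S}|}$, so $\cO(\A_{G,\Sigma}^\times)$ is a quotient of $\cO(G)^{\otimes |\mathsf{S}|}$.

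Next, the Peter--Weyl type decomposition $\cO(G)=\bigoplus_{\lambda \in X^*(H)_+}V(\lambda)^* \otimes V(\lambda)$ shows that each tensor factor $\cO(G)$ is spanned as a $\C$-vector space by matrix coefficients of finite-dimensional rational representations. Consequently $\cO(G)^{\otimes |\mathsf{S}|}$ is generated as an algebra by the matrix coefficients of each factor, which pull back to matrix coefficients of the twisted Wilson lines $g^\mathrm{tw}_{[c]}$ for $[c] \in \mathsf{S}$. The identity
\begin{align*}
    c^V_{f,v}(g^\mathrm{tw}_{[c]}) = c^V_{f,\,\vw^{-1}.v}(g_{[c]})
\end{align*}
coming from $g^\mathrm{tw}_{[c]}=g_{[c]}\vw^{-1}$ rewrites each such matrix coefficient as a matrix coefficient of the untwisted Wilson line $g_{[c]}$, proving the first assertion.

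For the second assertion, I would invoke the stronger form of \cref{lem:groupoid_generator} valid when $\Sigma$ has at least two marked points, which lets us choose $\mathsf{S}$ to consist of simple framed arc classes together with $\sqrt{\fiber_E}^{\inn}$ and $\sqrt{\fiber_E}^{\out}$ for $E \in \mathbb{B}$. The Wilson lines along these generators are by definition simple Wilson lines and boundary Wilson lines, so the conclusion follows immediately. I do not foresee a substantive obstacle: the corollary is essentially an assembly of ingredients already in hand. The only point requiring a moment's thought is that matrix coefficients of \emph{inverses} of generators need not be adjoined separately, because the closed embedding $\A_{G,\Sigma}^\times \hookrightarrow G^{|\mathsf{S}|}$ records each generator in its own copy of $G$, so the full algebra $\cO(G)$ attached to that factor --- including the functions involved in inversion --- is already generated by matrix coefficients of the chosen Wilson line itself.
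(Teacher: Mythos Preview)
Your proposal is correct and follows the same approach as the paper, which simply states that the corollary follows by combining the closed embedding theorem with \cref{lem:groupoid_generator}. Your write-up is a faithful elaboration of that one-line argument, including the routine observations about Peter--Weyl and inversion that the paper leaves implicit.
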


\begin{rem}
When $\Sigma$ has punctures, the moduli space $\mathrm{Loc}_{G,\Sigma}^\mathrm{un}$ obtained from $\A_{G,\Sigma}^\times$ by forgetting the decorations on punctures is similarly embedded into $\Hom\left(\Pi_1(T'\Sigma,\bB^\pm),G\right)$.
\end{rem}

\medskip
\section{Equality  of cluster and upper cluster algebras}
\label{section4}
\subsection{Generalities on cluster algebras}
 We rapidly recall the necessary definitions of cluster algebras $\mathscr{A}$ and upper cluster algebras $\mathscr{U}$ following the notations of \cite{FG09}. 

Let $\mathcal{F}=\mathbb{C}(A_1, \ldots, A_n)$ be the field of rational functions in $n$ many independent variables $A_1, \ldots, A_n$ with coefficients in $\mathbb{C}$. Fix a positive integer $m\leq n$.
Let $\varepsilon=(\varepsilon_{ij})$ be an $m\times n$ integer matrix such that its $m\times m$ submatrix given by the first $m$ columns is skew-symmetrizable. The set ${\bf A}=\{A_1, \ldots, A_n\}$ is called a {\it cluster chart}. The pair $\mathbf{i}=({\bf A}, \varepsilon)$ is called a {\it seed} in $\mathcal{F}$. Let   $\mathbb{L}_\mathbf{i}=\mathbb{C}[A_1^{\pm 1}, \ldots, A_n^{\pm 1}]\subset \mathcal{F}$ be the ring of Laurent polynomials in $A_1,\ldots, A_n$.

For $1\leq k \leq m$, the {\em seed mutation} of $\mathbf{i}$ in the direction $k$ produces a new seed $\mathbf{i}_k := \left( \{A_1', \ldots, A_n'\}, \varepsilon'\right)$ as follows:
\begin{align*}
A_i'&= \begin{cases} A_i & \mbox{if $i\neq k$},\\
A_k^{-1}\left(\prod_{j | \varepsilon_{kj}>0} A_{j}^{\varepsilon_{kj}}+ \prod_{j | \varepsilon_{kj}<0} A_{j}^{-\varepsilon_{kj}}\right) & \mbox{otherwise},
\end{cases}\\
\varepsilon_{ij}'&= \begin{cases}
-\varepsilon_{ij}, &\mbox{if } k\in \{i,j\},\\
\varepsilon_{ij}, &\mbox{if } \varepsilon_{ik}\varepsilon_{kj}<0, ~~ k\notin\{i,j\},\\
\varepsilon_{ij}+|\varepsilon_{ik}|\varepsilon_{kj}, &\mbox{if } \varepsilon_{ik}\varepsilon_{kj}>0, ~~ k\notin\{i,j\}.
\end{cases}
\end{align*}
We say that a seed $\mathbf{i}'$ is mutation equivalent to $\mathbf{i}$, and denote by $\mathbf{i}'\sim \mathbf{i}$, if $\mathbf{i}'$ can be obtained from $\mathbf{i}$ by a sequence of seed mutations. The variables $A_i'$ in each $\mathbf{i}'$ are called cluster variables. Note that the cluster variables $A_{m+1}, \ldots, A_{n}$ are invariant under mutations and  are called frozen variables.  

Let us start with an initial seed $\mathbf{i}$. Following \cite{BFZ}, there are three versions of commutative algebras associated with $\mathbf{i}$. 
\begin{dfn} The {\it upper bound} associated with a seed $\mathbf{i}$ is a ring
\[\overline{\mathscr{U}}(\mathbf{i}) = \mathbb{L}_\mathbf{i}\cap \mathbb{L}_{\mathbf{i}_1}\cap \ldots \cap \mathbb{L}_{\mathbf{i}_m}.\]
The {\em upper cluster algebra} associated with $\mathbf{i}$ is the intersection of Laurent polynomials for all seeds $\mathbf{i}'$ that are mutation equivalent to $\mathbf{i}$:
\[
\mathscr{U}(\mathbf{i}) = \bigcap_{\mathbf{i}'\sim \mathbf{i}} \mathbb{L}_{\mathbf{i}'}.
\]
The {\em cluster algebra} $\mathscr{A}(\mathbf{i})$ is the unital $\mathbb{C}$-subalgebra of $\mathcal{F}$ generated by the cluster variables and the inverses $A_{m+1}^{-1}, \ldots, A_n^{-1}$ of the frozen variables. 
\end{dfn}
We frequently write $\mathscr{U}$ and $\mathscr{A}$ instead of $\mathscr{U}(\mathbf{i})$ and $\mathscr{A}(\mathbf{i})$ when there is no confusion.

We have the following inclusion relations
\[
\mathscr{A}(\mathbf{i}) \subseteq \mathscr{U}(\mathbf{i})\subseteq \overline{\mathscr{U}}(\mathbf{i}),
\]
where the first inclusion is a consequence of the Laurent phenomenon of the cluster variables \cite{FZ02}, and the second inclusion is by definition. 

The following result of \cite[Corollary 1.9]{BFZ} will be useful in this paper.

\begin{prop}
\label{upperbound=upper}
If the exchange matrix $\varepsilon$ in $\mathbf{i}$ has full rank $m$, then the upper cluster algebra $\mathscr{U}(\mathbf{i})$ coincides with the upper bound $\overline{\mathscr{U}}(\mathbf{i})$.
\end{prop}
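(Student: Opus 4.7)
The plan is to follow the strategy of Berenstein--Fomin--Zelevinsky \cite{BFZ}, whose central idea is that when the exchange matrix has full rank, the upper bound $\overline{\mathscr{U}}(\mathbf{i})$ is \emph{invariant under mutation}, and therefore automatically coincides with the intersection over all seeds.

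First I would record the routine fact that if $\varepsilon$ has full rank $m$, then for any $k \in \{1,\dots,m\}$ the mutated exchange matrix $\varepsilon'$ in $\mathbf{i}_k$ still has full rank $m$. This follows from the matrix-mutation formula, which can be written as $\varepsilon' = E_k\,\varepsilon\,F_k$ for invertible integer matrices $E_k, F_k$ (depending on $\varepsilon$). Consequently, the full-rank hypothesis propagates along the mutation class.

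The main step is to prove the mutation invariance
\begin{equation*}
\overline{\mathscr{U}}(\mathbf{i}) \;=\; \overline{\mathscr{U}}(\mathbf{i}_k) \qquad (1 \leq k \leq m).
\end{equation*}
For this, I would work with a pair of "adjacent" Laurent rings $\mathbb{L}_\mathbf{i}$ and $\mathbb{L}_{\mathbf{i}_k}$ and analyze their intersection inside $\mathcal{F}$. The key technical lemma is that, under the full-rank assumption, an element $x \in \mathbb{L}_\mathbf{i}\cap \mathbb{L}_{\mathbf{i}_k}$ admits an expression as a Laurent polynomial in $A_1,\dots,A_n$ whose dependence on $A_k$ factors through the exchange binomial
\begin{equation*}
P_k \;:=\; \prod_{j:\,\varepsilon_{kj}>0} A_j^{\varepsilon_{kj}} + \prod_{j:\,\varepsilon_{kj}<0} A_j^{-\varepsilon_{kj}},
\end{equation*}
in a way symmetric under the exchange $A_k \leftrightarrow A_k'$. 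The full rank of $\varepsilon$ is what guarantees that the monomials occurring in $P_k$ are multiplicatively independent of the remaining cluster variables, so that one can unambiguously ``divide out'' by powers of $P_k$. Running this argument simultaneously for all $k\neq \ell$ (with $\ell$ a fixed mutation direction) shows that the defining intersection
\begin{equation*}
\overline{\mathscr{U}}(\mathbf{i}) \;=\; \mathbb{L}_\mathbf{i}\cap \mathbb{L}_{\mathbf{i}_1}\cap \cdots \cap \mathbb{L}_{\mathbf{i}_m}
\end{equation*}
is unchanged if we replace any one factor $\mathbb{L}_{\mathbf{i}_k}$ by $\mathbb{L}_{(\mathbf{i}_\ell)_k}$, i.e.\ the upper bound computed at the seed $\mathbf{i}_\ell$ agrees with the one computed at $\mathbf{i}$.

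Once mutation invariance is in hand, the conclusion is immediate. By iteration, $\overline{\mathscr{U}}(\mathbf{i}') = \overline{\mathscr{U}}(\mathbf{i})$ for every seed $\mathbf{i}' \sim \mathbf{i}$, so
\begin{equation*}
\overline{\mathscr{U}}(\mathbf{i}) \;=\; \bigcap_{\mathbf{i}' \sim \mathbf{i}} \overline{\mathscr{U}}(\mathbf{i}') \;\subseteq\; \bigcap_{\mathbf{i}' \sim \mathbf{i}} \mathbb{L}_{\mathbf{i}'} \;=\; \mathscr{U}(\mathbf{i}).
\end{equation*}
Combined with the trivial inclusion $\mathscr{U}(\mathbf{i}) \subseteq \overline{\mathscr{U}}(\mathbf{i})$, this yields the desired equality. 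The genuine obstacle throughout is the mutation-invariance step: one must handle the delicate interplay between the exchange binomial $P_k$ and the remaining Laurent monomials, and it is precisely here that the full-rank hypothesis is essential---without it, the factorization used to pass between $\mathbb{L}_\mathbf{i}$ and $\mathbb{L}_{\mathbf{i}_k}$ can fail, which is why in general $\mathscr{U} \subsetneq \overline{\mathscr{U}}$.
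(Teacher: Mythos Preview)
The paper does not give its own proof of this proposition; it simply quotes it as \cite[Corollary 1.9]{BFZ}. Your sketch follows exactly the Berenstein--Fomin--Zelevinsky strategy behind that corollary (full rank $\Rightarrow$ coprimality of exchange binomials $\Rightarrow$ mutation invariance of the upper bound $\Rightarrow$ $\overline{\mathscr{U}}=\mathscr{U}$), so there is nothing to compare: you have outlined the argument the paper is citing.
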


\subsection{The upper cluster algebra coincides with the function ring $\cO(\A_{G,\Sigma}^\times)$}\label{subsec:U=O}

Let $\mathcal{T}$ be an ideal triangulation of the marked surface $\Sigma$. Let $e(\mathcal{T})$ and $t(\mathcal{T})$ denote the sets of edges and triangles of $\mathcal{T}$. Note that
\[
|t(\mathcal{T})|=-2\chi(\Sigma)
+|\bM|, \quad \quad |e(\mathcal{T})| = -3\chi(\Sigma)
+2|\bM|.
\] 
For each triangle $T\in t(\mathcal{T})$, we choose a vertex $v_T$ of $T$ together with a reduced decomposition $\mathbf{s}_T$ of $w_0$. The data
\[
\bm{\mathcal T}:=({\mathcal T}, \{v_T\}, \{\mathbf{s}_T\})
\]
is called a {\em decorated ideal triangulation}. Following the construction of \cite{GS19}, every $\bm{\mathcal{T}}$ gives rise to a seed $\mathbf{i}_{\bm{\mathcal{T}}}$ in the field $\mathcal{K}(\mathcal{A}_{G, \Sigma})$ of rational functions (recall \cref{rem:integral}). 
The seeds obtained from different $\bm{\mathcal{T}}$ are mutation equivalent, and thus give rise to the canonical cluster algebras 
\[
\mathscr{A}_{\mathfrak{g},\Sigma} \subseteq \mathscr{U}_{\mathfrak{g},\Sigma} \subset \mathcal{K}(\mathcal{A}_{G, \Sigma}).
\]
The exchange matrix $\varepsilon$ in each $\mathbf{i}_{\bm{\mathcal{T}}}$ is an $m \times n$ matrix, where
\[
n= |e(\mathcal{T})|r + |t(\mathcal{T})|(l(w_0)-r), \quad \quad n-m = |\bM|r
\]
with $r:=\mathop{\mathrm{rank}} G=|S|$. 
Briefly speaking, the construction goes as follows: 
\begin{itemize}
    \item the ideal triangulation $\mathcal{T}$ gives rise to a decomposition of the moduli space $\A_{G,\Sigma}$ into the pieces $\A_{G,T}$ for $T \in t(\mathcal{T})$; 
    \item the vertex $v_T$ determines an isomorphism $f_{v_T}:\A_{G,T} \xrightarrow{\sim} \Conf_3 \A_G$ as in \cref{ex:polygon}; 
    \item the pull-back of the coordinate system on $\Conf_3 \A_G$ associated with the reduced word $\bs_T$ gives a coordinate system on $\A_{G,T}$ for all $T \in t(\mathcal{T})$, which glue together to give a coordinate system on $\A_{G,\Sigma}$, \emph{i.e.}, 
    an open dense embedding 
    \[
    i_{\bm{\mathcal{T}}}: ~ (\mathbb{C}^\times)^n \longrightarrow \mathcal{A}^\times_{G, \Sigma}.
    \]
\end{itemize}
The transition maps $i_{\bm{\mathcal{T}}'}^{-1}\circ i_{\bm{\mathcal{T}}}$ for different ideal triangulations are given by sequences of seed mutations. For details, we refer the reader to \cite{GS19}. In \cref{subsec:cluster_quadrilateral}, we will review the cluster $K_2$-structure on $\A_{G,Q}^\times$ when $\Sigma=Q$ is a quadrilateral, from which the coordinates on triangles and edges can be also read. 

We are going to prove:
\begin{thm} \label{co=cl}
For any marked surface $\Sigma$ satisfying the assumptions in \cref{subsec:marked_surface}, the upper cluster algebra $\mathscr{U}_{\mathfrak{g},\Sigma}$ coincides with the function ring $\cO(\A_{G,\Sigma}^\times)$.
\end{thm}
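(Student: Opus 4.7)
The plan is to prove the theorem by a codimension-two covering argument, following the strategy of \cite[Theorem 1.1]{S21}. The key observations are that $\A_{G,\Sigma}^\times$ is a normal irreducible algebraic variety, that each decorated ideal triangulation $\bm{\mathcal{T}}$ furnishes an open embedding $i_{\bm{\mathcal{T}}}: (\mathbb{C}^\times)^n \hookrightarrow \A_{G,\Sigma}^\times$ onto a cluster torus, and that the frozen variables extend to units on $\A_{G,\Sigma}^\times$. The latter reflects the fact that the non-vanishing of the boundary frozens is precisely the genericity condition defining $\A_{G,\Sigma}^\times$, since they encode the $h$-invariants along the boundary intervals (cf. \cref{prop:boundary condition}).

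First, I would verify that the exchange matrix of $\mathbf{i}_{\bm{\mathcal{T}}}$ has full rank for some (equivalently, any) $\bm{\mathcal{T}}$, so that \cref{upperbound=upper} identifies $\mathscr{U}_{\mathfrak{g},\Sigma}$ with the upper bound $\overline{\mathscr{U}}(\mathbf{i}_{\bm{\mathcal{T}}}) = \mathbb{L}_{\mathbf{i}_{\bm{\mathcal{T}}}} \cap \bigcap_{k=1}^m \mathbb{L}_{\mathbf{i}_{\bm{\mathcal{T}},k}}$. This is a combinatorial statement about the quiver attached to $\bm{\mathcal{T}}$, and should follow by induction on flips from the configuration-space case on a single triangle.

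Next comes the geometric heart of the argument: showing that the union
\begin{align*}
U := i_{\bm{\mathcal{T}}}((\mathbb{C}^\times)^n) \,\cup\, \bigcup_{k=1}^m i_{\bm{\mathcal{T}},k}((\mathbb{C}^\times)^n) \subset \A_{G,\Sigma}^\times
\end{align*}
has complement of codimension at least two. The complement of $i_{\bm{\mathcal{T}}}((\mathbb{C}^\times)^n)$ in $\A_{G,\Sigma}^\times$ is a union of irreducible divisors, and since the frozens are units each such divisor is the vanishing locus of an unfrozen cluster variable $A_k$. By the exchange relation $A_k A_k' = \text{monomial}+\text{monomial}$, $A_k$ is a unit on $i_{\bm{\mathcal{T}},k}((\mathbb{C}^\times)^n)$, so the generic point of $\{A_k=0\}$ is contained in $U$; thus the complement of $U$ is supported on pairwise intersections of these divisors and has codimension at least two. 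Hartogs' theorem applied to the normal variety $\A_{G,\Sigma}^\times$ then gives $\cO(\A_{G,\Sigma}^\times) = \cO(U)$, while by construction restriction to each of the $m+1$ cluster tori turns an element of $\cO(U)$ into a Laurent polynomial, so $\cO(U) \subset \overline{\mathscr{U}}(\mathbf{i}_{\bm{\mathcal{T}}})$. The reverse inclusion $\mathscr{U}_{\mathfrak{g},\Sigma} \subset \cO(\A_{G,\Sigma}^\times)$ is obtained by the same argument: each Laurent polynomial in a cluster extends to a regular function on $U$, hence on $\A_{G,\Sigma}^\times$.

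The main obstacle is verifying the codimension-two covering property uniformly across all unfrozen mutation directions. For $k$ coming from a flip of $\mathcal{T}$, $\mathbf{i}_{\bm{\mathcal{T}},k}$ is the seed of another triangulation and $i_{\bm{\mathcal{T}},k}$ is geometrically transparent. For $k$ coming from an \emph{internal} mutation within a single triangle $T$ (parametrized by the reduced word $\bs_T$), the analysis is local: via the decomposition $\A_{G,\Sigma} \supset \A_{G,T} \cong \Conf_3 \A_G$, it reduces to the analogous statement on $\Conf_3 \A_G$, which follows from the cluster geometry of double Bott--Samelson cells \cite{SW21}. Assembling the local coverings into a global one, and confirming that $\A_{G,\Sigma}^\times$ is normal, is the technical crux.
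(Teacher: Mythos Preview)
Your strategy is in the right spirit, but the paper organizes the codimension-two covering differently, and the difference matters. Rather than covering $\A_{G,\Sigma}^\times$ by the initial cluster torus and its $m$ single-mutation neighbors, the paper covers it by geometrically defined open sets $\mathcal{A}_{G,\Sigma}^{\mathcal{T},E}$ indexed by the \emph{diagonals} $E$ of $\mathcal{T}$, where $\mathcal{A}_{G,\Sigma}^{\mathcal{T},E}$ is the locus on which every edge other than $E$ carries a generic pair of decorated flags. The codimension-two property is then immediate: the complement is where two independent edge-genericity conditions fail. The real work is computing $\cO(\mathcal{A}_{G,\Sigma}^{\mathcal{T},E})$, done by cutting along all diagonals $E'\neq E$ to embed $\mathcal{A}_{G,\Sigma}^{\mathcal{T},E}$ as a closed subvariety of $\prod_{T\not\supset E}\A_{G,T}^\times \times \A_{G,Q}^\times$ (triangles plus the quadrilateral $Q$ containing $E$), where the disk case via \cite{BFZ} and \cite{SW21} applies. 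This identifies $\cO(\mathcal{A}_{G,\Sigma}^{\mathcal{T},E})$ with the upper cluster algebra of the seed obtained from $\mathbf{i}_{\bm{\mathcal{T}}}$ by freezing every mutable vertex not on $E$; intersecting over $E$ and applying \cref{upperbound=upper} finishes.

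Your direct torus covering is not wrong in principle, but the crux you identify is a genuine obstacle that the paper's organization sidesteps. Your proposed reduction to $\Conf_3\A_G$ for a triangle-internal $k$ presupposes one can cut along the other diagonals, but that is only possible on the locus where those diagonals are generic, a \emph{proper} open subset of $\A_{G,\Sigma}^\times$; the paper's intermediaries $\mathcal{A}_{G,\Sigma}^{\mathcal{T},E}$ are exactly what make such cutting rigorous. Two smaller points: (i) $A_k$ is not a unit on the $k$-mutated torus (rather $A_k'$ is), and concluding that the generic point of $\{A_k=0\}$ lies there requires $\{A_k=0\}\not\subset\{A_k'=0\}$, which the exchange relation alone does not give; (ii) for higher rank a single mutation at an edge vertex is not a flip. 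Finally, the paper proves full rank not by induction on flips but via the surjectivity of $p:\A_{G,\Sigma}^\times\to\X_{G',\Sigma}^\times$, which holds precisely because $\Sigma$ has no punctures.
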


The proof goes through the same steps as the proof of \cite[Theorem 1.1]{S21}.

\begin{lem}
\label{full.rank.lem}
The exchange matrix $\varepsilon$ in each $\mathbf{i}_{\bm{\mathcal{T}}}$ is of full rank.  
\end{lem}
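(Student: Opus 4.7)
The plan is to exploit the amalgamation description of the Goncharov--Shen seed $\mathbf{i}_{\bm{\mathcal T}}$ and reduce to the known full-rank property of the exchange matrix for a single triangle. Recall from \cite{GS19} that $\mathbf{i}_{\bm{\mathcal{T}}}$ is obtained by amalgamating the individual triangle seeds $\mathbf{i}_{\bs_T}$ ($T\in t(\mathcal{T})$) along the edges of $\mathcal{T}$: the frozen vertices of adjacent triangles that lie on a shared edge $e$ are identified to a single vertex, which becomes mutable if $e$ is interior to $\Sigma$ and remains frozen if $e$ lies on $\partial\Sigma$. At the level of exchange matrices, this produces
\begin{align*}
    \varepsilon_{ij} = \sum_{T\in t(\mathcal{T})} \varepsilon^{(T)}_{\iota_T(i),\iota_T(j)},
\end{align*}
where $\iota_T$ denotes the inclusion of vertices of $T$ into the full index set, and $\varepsilon^{(T)}_{\iota_T(i),\iota_T(j)}=0$ whenever one of $i,j$ does not correspond to a vertex of $T$.

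Second, I would invoke the classical fact that for each single triangle $T$ with a chosen reduced word $\bs_T$, the exchange matrix $\varepsilon^{(T)}$ (which is the BFZ-type seed associated to $\Conf_3\A_G$) has full rank; this can be extracted from \cite[Proposition 2.6]{BFZ} applied to the reduced double Bruhat cell, or from the direct analysis in \cite{GS19}.

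Third, to pass from triangles to the whole triangulation, suppose $c\in\mathbb{Q}^m$ satisfies $c^\mathsf{T}\varepsilon=0$. For each triangle $T$, test this identity against the columns of $\varepsilon$ indexed by the vertices that are strictly interior to $T$ (i.e.\ not on any edge of $T$). Such a column is supported, inside the sum above, only on the contribution from $T$, so the identity restricts to $c|_T\cdot \varepsilon^{(T)}|_{\text{int}(T)}=0$, where $c|_T$ records the entries of $c$ at mutable vertices of $T$. By the full-rank property of $\varepsilon^{(T)}$, and by recording which of the columns of $\varepsilon^{(T)}$ correspond to interior-to-$T$ vertices versus edge vertices, one can then read off that the interior-to-$T$ components of $c|_T$ are determined by (indeed, must be compatible with) the components at edge vertices. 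Finally, one treats the edge vertices by testing $c^\mathsf{T}\varepsilon=0$ against the edge columns of $\varepsilon$, where contributions from the two triangles adjacent to an interior edge are added; a careful bookkeeping shows that the resulting constraint forces $c=0$. The main obstacle is precisely this last combinatorial step: ensuring that the kernels of the individual $\varepsilon^{(T)}$ restricted to their edge vertices glue consistently across interior edges so that only $c=0$ can satisfy every local constraint. A clean way to organize this is to choose the triangulation and decorations so that the matrix has a block-triangular form with respect to an ordering of the triangles (which is possible because mutations preserve the rank, so we are free to pick any convenient $\bm{\mathcal T}$), and then verify full rank block by block.
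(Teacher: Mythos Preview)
Your approach is genuinely different from the paper's, and as written it has a real gap.

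The paper's proof is short and geometric: it uses the natural map $p:\A^\times_{G,\Sigma}\to\X^\times_{G',\Sigma}$ to the Fock--Goncharov moduli space for the adjoint group $G'$, together with the monomial formula $p^\ast X_i=\prod_j A_j^{\varepsilon_{ij}}$. When $\Sigma$ has no punctures, $p$ is surjective, so $p^\ast$ is injective on function rings; this forces the monomials $\prod_j A_j^{\varepsilon_{ij}}$ to be algebraically independent, hence the rows of $\varepsilon$ are linearly independent. No combinatorics with amalgamation is needed.

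Your combinatorial route via amalgamation is plausible in spirit, but you explicitly stop at the crucial step (``The main obstacle is precisely this last combinatorial step'') and do not actually prove that the local kernels glue to force $c=0$. Two concrete problems:
\begin{itemize}
    \item Testing $c^{\mathsf T}\varepsilon=0$ against the interior-to-$T$ columns only tells you that $c|_T$ lies in the left kernel of the submatrix of $\varepsilon^{(T)}$ with those columns. The full-rank statement for the triangle seed is about the \emph{rows} indexed by the mutable (interior) vertices of $T$, not about this column submatrix, so you cannot directly conclude that the interior components of $c|_T$ are determined by the edge components without a further rank computation you have not done.
    \item Your fallback suggestion of choosing $\bm{\mathcal T}$ so that $\varepsilon$ becomes block-triangular does not obviously work once $\Sigma$ has positive genus or several boundary components: there is in general no linear ordering of the triangles so that each new triangle is attached along a single edge to the union of the previous ones, which is what a block-triangular structure would require.
\end{itemize}
So the proposal is not wrong in strategy, but it is not a proof: the amalgamation step is left as an unresolved combinatorial claim. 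The paper's geometric argument sidesteps all of this by using surjectivity of $p$, which is where the no-punctures hypothesis actually enters.
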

\begin{proof}
Let $G'$ be the associated adjoint group of $G$. Fock and Goncharov \cite{FG03} also considered a moduli space $\mathcal{X}_{G', \Sigma}$, called the moduli space of framed $G'$-local systems. 
Similarly to $\A_{G,\Sigma}^\times$, after imposing a generic condition on each boundary interval of $\Sigma$, we get an open subspace $\mathcal{X}_{G', \Sigma}^\times \subset \mathcal{X}_{G', \Sigma}$. Every $\bm{\mathcal{T}}$ equips $\mathcal{X}_{G', \Sigma}^\times$ with a collection of cluster Poisson coordinates $\{X_i~|~ 1\leq i\leq m\}$. Let $\{A_j~|~ 1\leq j \leq n\}$ be the cluster coordinates on $\mathcal{A}_{G, \Sigma}^\times$ associated with $\bm{\mathcal{T}}$. Following \cite{FG03} and \cite{GS19}, there is a natural map 
\[
p: \mathcal{A}^\times_{G, \Sigma} \rightarrow \mathcal{X}^\times_{G',\Sigma}
\]
such that 
\begin{equation}
\label{asn}
p^* X_i =\prod_{j=1}^n A_j^{\varepsilon_{ij}}.
\end{equation}
When $\Sigma$ has no punctures, the map $p$ is surjective. Therefore the ring homomorphism $p^*$ in \eqref{asn} is injective, and $\varepsilon$ is of full rank. 
\end{proof}

\begin{lem}
\label{disk.case}
Theorem \ref{co=cl} holds when $\Sigma=\mathbb{D}_k$ is a disk with $k$ marked points.
\end{lem}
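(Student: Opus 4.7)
The plan is to follow the codimension-two strategy used in the proof of \cite[Theorem 1.1]{S21}, which is the strategy the text indicates. The inclusion $\cO(\A_{G,\mathbb{D}_k}^\times) \subseteq \mathscr{U}_{\mathfrak{g},\mathbb{D}_k}$ is essentially formal: for every decorated triangulation $\bm{\mathcal{T}}$ of $\mathbb{D}_k$, the map $i_{\bm{\mathcal{T}}} \colon (\mathbb{C}^\times)^n \hookrightarrow \A_{G,\mathbb{D}_k}^\times$ is an open embedding into the irreducible variety $\A_{G,\mathbb{D}_k}^\times$, so pullback gives an injection $\cO(\A_{G,\mathbb{D}_k}^\times) \hookrightarrow \cO((\mathbb{C}^\times)^n) = \mathbb{L}_{\mathbf{i}_{\bm{\mathcal{T}}}}$. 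Intersecting over all mutation-equivalent seeds yields the desired inclusion into $\mathscr{U}_{\mathfrak{g},\mathbb{D}_k}$.

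For the reverse inclusion $\mathscr{U}_{\mathfrak{g},\mathbb{D}_k} \subseteq \cO(\A_{G,\mathbb{D}_k}^\times)$, I would first record that $\A_{G,\mathbb{D}_k}^\times \cong \Conf_k^\times \A_G$ is smooth and irreducible, which is standard from its description as a quotient of an open subset of $\A_G^k$ by a free diagonal $G$-action (cf.\ \cref{ex:polygon}). Writing $\A_{G,\mathbb{D}_k}^\circ := \bigcup_{\bm{\mathcal{T}}' \sim \bm{\mathcal{T}}} i_{\bm{\mathcal{T}}'}((\mathbb{C}^\times)^n)$ for the open subset covered by all cluster charts, any $f \in \mathscr{U}_{\mathfrak{g},\mathbb{D}_k}$ is by definition regular on $\A_{G,\mathbb{D}_k}^\circ$. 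By algebraic Hartogs' theorem, to extend $f$ to a regular function on all of $\A_{G,\mathbb{D}_k}^\times$, it suffices to show that the complement $\A_{G,\mathbb{D}_k}^\times \setminus \A_{G,\mathbb{D}_k}^\circ$ has codimension at least two.

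The heart of the argument is this codimension estimate. Each frozen cluster variable corresponds to a matrix coefficient of the $h$-invariant $h(\flA_1,\flA_2) \in H$ associated with a boundary interval, and by genericity this $h$-invariant takes values in the algebraic torus $H$ and is therefore nowhere vanishing on $\A_{G,\mathbb{D}_k}^\times$. Hence only the divisors coming from mutable variables contribute to the complement. For each mutable index $k$, mutation at $k$ produces a chart $i_{\bm{\mathcal{T}}'_k}((\mathbb{C}^\times)^n)$ that covers the whole stratum $\{A_k = 0\}$ except along the locus $\{A_k = A_k' = 0\}$ where the mutated variable $A_k'$ also vanishes. Using the full-rank condition from \cref{full.rank.lem} together with an explicit analysis of the two exchange monomials along boundary strata, one checks that this simultaneous vanishing locus has codimension two, so the union of all such loci (one for each pair $(\bm{\mathcal{T}}', k)$) still has codimension at least two.

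The main obstacle is the verification that $\{A_k = A_k' = 0\}$ truly has codimension two for every mutable direction in every seed obtainable from $\bm{\mathcal{T}}$ by mutation. This requires a careful description of the boundary divisors of $\A_{G,\mathbb{D}_k}^\times$ and of the exchange monomials on them; however, after identifying $\Conf_k^\times \A_G$ with a double Bott--Samelson cell as in \cite{SW21}, this codimension-two verification is precisely what is carried out in \cite[Theorem 1.1]{S21} and \cite{SW21}, and so can be directly imported here.
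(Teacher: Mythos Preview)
Your proposal and the paper's proof ultimately land on the same point: both identify $\A_{G,\mathbb{D}_k}^\times$ with a double Bott--Samelson cell and import the result from \cite{SW21}. The paper, however, does \emph{only} that. It records the isomorphisms
\[
\A_{G,\mathbb{D}_3}^\times \cong G^{e,w_0}\times H,\qquad
\A_{G,\mathbb{D}_4}^\times \cong G^{w_0,w_0}\times H^2,\qquad
\A_{G,\mathbb{D}_k}^\times \cong \mathrm{Conf}_{w_0^{k-2}}^e(\A_{\mathrm{sc}})\times H^{k-2},
\]
and then cites \cite[Theorem 2.10]{BFZ} for $k=3,4$ and \cite[Theorem 1.1]{SW21} in general. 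There is no intermediate codimension-two argument in the disk case.

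The codimension-two strategy you sketch in your second and third paragraphs is in fact the argument the paper runs for a \emph{general} surface $\Sigma$, and that argument \emph{uses} the disk lemma as its basic input (see \cref{local.iso.q}, where each triangle and quadrilateral piece is handled by invoking \cref{disk.case}). So reproducing this strategy for $\Sigma=\mathbb{D}_k$ is redundant and mildly circular: to make it go through you would need an independent codimension-two verification for the disk, and that is precisely what \cite{SW21} supplies and what you correctly defer to in your last paragraph. One small inaccuracy: the full-rank condition of \cref{full.rank.lem} does not by itself yield a codimension-two estimate; its role (via \cref{upperbound=upper}) is to collapse the upper bound $\overline{\mathscr{U}}$ to $\mathscr{U}$, which is a separate step in the chain of equalities.
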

\begin{proof} Let $H$ be the Cartan subgroup of $G$. If $k=3$ or $4$, then 
\[
\mathcal{A}^\times_{G, \mathbb{D}_3} \stackrel{\sim}{=} G^{e, w_0} \times H, \quad \quad \mathcal{A}^\times_{G, \mathbb{D}_4} \stackrel{\sim}{=} G^{w_0, w_0} \times H^2,
\]
where $G^{e, w_0}$ and $G^{w_0, w_0}$ are double Bruhat cells. After a careful comparison between the cluster structures on double Bruhat cells and on $\mathcal{A}^\times_{G, \Sigma}$, the lemma is a direct consequence of \cite[Theorem 2.10]{BFZ}. 
In general, we have
\[\mathcal{A}_{G,\mathbb{D}_k}^\times \stackrel{\sim}{=}{\rm Conf}_{w_0^{k-2}}^e(\mathcal{A}_{\rm sc})\times H^{k-2},
\]
where ${\rm Conf}_{w_0^{k-2}}^e(\mathcal{A}_{\rm sc})$ is a double Bott-Samelson variety in \cite{SW21}. The lemma follows from \cite[Theorem 1.1]{SW21}.
\end{proof}

Fix a decorated ideal triangulation $\bm{\mathcal{T}}$. Let $E \in e({\mathcal T})$ be a diagonal (i.e., an internal edge). Let $\mathbf{i}_{\bm{\mathcal{T}}, E}$ be the seed obtained from $\mathbf{i}_{\bm{\mathcal{T}}}$ by freezing all the mutable vertices that are placed on the diagonals different from $E$. Let $\mathcal{A}_{G, \Sigma}^{\mathcal{T}, E}\subset \mathcal{A}_{G, \Sigma}^\times$ be the open subspace such that for every $E'\neq E$ in $e(\mathcal{T})$, its associated pair of decorated flags is generic. 

\begin{lem}[Proved in \cref{app:amalgamation}] \label{local.iso.q}
The coordinate ring $\mathcal{O}(\mathcal{A}_{G, \Sigma}^{\mathcal{T}, E})$ coincides with the upper cluster algebra $\mathscr{U}(\mathbf{i}_{\bm{\mathcal{T}}, E})$.
\end{lem}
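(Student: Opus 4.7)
The plan is to reduce the claim to the disk case \cref{disk.case} by decomposing both sides along the edges of $\mathcal{T}$ other than $E$. Let $T_1, T_2 \in t(\mathcal{T})$ be the two triangles incident to $E$, and let $Q_E := T_1 \cup_E T_2$ denote the quadrilateral they form, with $E$ as its diagonal. I expect the resulting proof to have three ingredients: a geometric fiber-product decomposition of $\mathcal{A}_{G,\Sigma}^{\mathcal{T},E}$, a matching amalgamation decomposition of the seed $\mathbf{i}_{\bm{\mathcal{T}},E}$, and the disk case applied to each piece.

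\emph{Geometric decomposition.} By definition, any point of $\mathcal{A}_{G,\Sigma}^{\mathcal{T},E}$ has a generic pair of decorated flags across every edge $E' \in e(\mathcal{T}) \setminus \{E\}$. Cutting $\Sigma$ along all such $E'$ and using the gluing/amalgamation description of $\mathcal{A}_{G,\Sigma}^\times$ from \cite{GS19}, I expect to obtain an isomorphism of stacks
\[
\mathcal{A}_{G, \Sigma}^{\mathcal{T},E} \;\cong\; \mathcal{A}_{G,Q_E}^\times \;\times_{H^{d_Q}}\; \prod_{T \neq T_1, T_2} \mathcal{A}_{G,T}^\times,
\]
where the iterated fiber product identifies the $H$-valued edge invariants at each shared internal edge of $\mathcal{T}$ other than $E$. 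The crucial point is that only the four sides of $Q_E$ (which are edges of $\mathcal{T}$ different from $E$) are required to carry generic pairs; the diagonal $E$ itself is not.

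\emph{Seed decomposition and combination.} On the cluster side, freezing the mutable vertices on every diagonal different from $E$ makes the mutable sub-quiver of $\mathbf{i}_{\bm{\mathcal{T}},E}$ split into a disjoint union: one component consisting of the vertices on $E$ together with the interior vertices of $T_1$ and $T_2$, i.e.\ the mutable part of the quadrilateral seed $\mathbf{i}_{\bm{\mathcal{T}}_{Q_E}}$, and one component consisting of the interior vertices of each remaining triangle $T$, i.e.\ the mutable part of $\mathbf{i}_{\bm{\mathcal{T}}_T}$. Since full rank holds by \cref{full.rank.lem}, \cref{upperbound=upper} applies and the upper cluster algebra equals the upper bound, which is computed one direction at a time. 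Mutations in disjoint mutable components are mutually independent, which I expect to yield a factorization
\[
\mathscr{U}(\mathbf{i}_{\bm{\mathcal{T}},E}) \;\cong\; \mathscr{U}(\mathbf{i}_{\bm{\mathcal{T}}_{Q_E}}) \otimes_{R} \bigotimes_{T \neq T_1, T_2} \mathscr{U}(\mathbf{i}_{\bm{\mathcal{T}}_T}),
\]
where $R$ is the Laurent polynomial ring in the shared frozen $H$-parameters. Combining this with \cref{disk.case} applied to $Q_E$ and to each triangle identifies the right-hand side with the coordinate ring of the geometric fiber product, which is $\mathcal{O}(\mathcal{A}_{G,\Sigma}^{\mathcal{T},E})$.

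\emph{Main obstacle.} The delicate point is the algebraic factorization of the upper bound along amalgamation. Mutations in different pieces obviously commute, so one direction (regular functions on the fiber product lie in $\mathscr{U}(\mathbf{i}_{\bm{\mathcal{T}},E})$) is essentially automatic; the harder inclusion requires showing that a Laurent polynomial which becomes Laurent after a single mutation at \emph{any} mutable vertex of $\mathbf{i}_{\bm{\mathcal{T}},E}$ can be written as an $R$-linear combination of tensor products of such Laurent polynomials for each piece. This should follow from the disjointness of the mutable sub-quivers, but it is the step that requires the most care, and it is exactly the point where the finite-rank exchange matrix hypothesis (\cref{full.rank.lem}) is used.
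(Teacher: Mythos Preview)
Your approach is essentially the paper's: cut $\Sigma$ along all diagonals $E' \neq E$ into triangles and the quadrilateral $Q_E$, apply \cref{disk.case} to each piece, and glue back. The paper phrases the gluing as a closed embedding
\[
\gamma: \mathcal{A}^{\mathcal{T},E}_{G, \Sigma} \hookrightarrow \Bigl(\prod_{T \,\mid\, E\nsubseteq T} \mathcal{A}^\times_{G, T}\Bigr)\times \mathcal{A}^\times_{G,Q_E}
\]
cut out by the relations $A_{E'_1,i}=A_{E'_2,i}$ identifying the two copies of each frozen edge variable, and then observes that the quotient of $\bigl(\bigotimes_T \mathscr{U}_{\mathfrak{g},T}\bigr)\otimes \mathscr{U}_{\mathfrak{g},Q_E}$ by these relations is exactly $\mathscr{U}(\mathbf{i}_{\bm{\mathcal{T}},E})$. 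Your fiber-product/tensor-over-$R$ formulation is the same statement in different language.

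Two remarks on the execution. First, the paper does \emph{not} use full rank (\cref{full.rank.lem}) or the upper bound in this lemma; that ingredient enters only in the final chain of equalities proving \cref{co=cl}. Your detour through the upper bound is unnecessary here: since the mutable sub-quiver of $\mathbf{i}_{\bm{\mathcal{T}},E}$ decomposes into disjoint pieces and each mutation formula only sees variables within one piece (including its now-frozen edge variables), every cluster of $\mathbf{i}_{\bm{\mathcal{T}},E}$ is literally a product of clusters of the pieces, so the Laurent polynomial rings factor as tensor products over $R$ and the intersection defining the upper cluster algebra factors directly. Second, your ``main obstacle'' is precisely the step the paper dispatches with the phrase ``which is $\mathscr{U}(\mathbf{i}_{\bm{\mathcal{T}},E})$ by definition''; you are right that it deserves a sentence of justification, but it follows from the disconnectedness of the mutable quiver as above rather than from full rank.
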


Now let us go through all the diagonals $E$ of ${\mathcal T}$ and set 
\[
\widetilde{\mathcal{A}}_{G, \Sigma}^\times:= \bigcup_E \mathcal{A}_{G, \Sigma}^{\mathcal{T}, E} \subset \mathcal{A}_{G, \Sigma}^\times
\]
\begin{lem}
\label{codim.lem}
We have $\mathcal{O}(\widetilde{\mathcal{A}}_{G, \Sigma}^\times)=\mathcal{O}({\mathcal{A}}_{G, \Sigma}^\times)$.
\end{lem}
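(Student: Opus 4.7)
The plan is to prove that the complement $Z := \mathcal{A}_{G,\Sigma}^\times \setminus \widetilde{\mathcal{A}}_{G,\Sigma}^\times$ has codimension at least $2$ inside $\mathcal{A}_{G,\Sigma}^\times$, and then apply Hartogs' extension theorem on the smooth (hence normal) variety $\mathcal{A}_{G,\Sigma}^\times$ to extend every regular function from $\widetilde{\mathcal{A}}_{G,\Sigma}^\times$ to $\mathcal{A}_{G,\Sigma}^\times$. This is the codimension-two covering argument from \cite[Theorem 1.1]{S21}.

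First, I would unwind the definitions. A point $x \in \mathcal{A}_{G,\Sigma}^\times$ lies in $\mathcal{A}_{G,\Sigma}^{\mathcal{T},E}$ precisely when the flag pair associated with every diagonal $E' \neq E$ of $\mathcal{T}$ is generic at $x$. Hence $x \notin \widetilde{\mathcal{A}}_{G,\Sigma}^\times = \bigcup_{E} \mathcal{A}_{G,\Sigma}^{\mathcal{T},E}$ if and only if, for every diagonal $E$, some other diagonal $E' \neq E$ carries a non-generic flag pair at $x$. Equivalently, at least two distinct diagonals $E_1, E_2$ have non-generic associated pairs at $x$. Setting $D_{E'} \subset \mathcal{A}_{G,\Sigma}^\times$ to be the Zariski-closed subset where the flag pair on $E'$ fails to be generic, this gives the set-theoretic description
\[
Z = \bigcup_{E_1 \neq E_2 \in e(\mathcal{T})_{\mathrm{diag}}} \bigl(D_{E_1} \cap D_{E_2}\bigr).
\]

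Next I would show that each $D_{E'}$ is pure of codimension $1$. In the cluster chart $\mathbf{i}_{\bm{\mathcal{T}}}$, the genericity of the flag pair on $E'$ is equivalent to the non-vanishing of the $r$ frozen-type cluster variables $A_{E',1},\dots,A_{E',r}$ attached to $E'$, so $D_{E'}$ is the union of the divisors $\{A_{E',i}=0\}$. For two different diagonals $E_1 \neq E_2$, I would use the cluster chart associated with any decorated triangulation containing both $E_1$ and $E_2$ as diagonals: inside such a chart the cluster variables $\{A_{E_1,i}\}_i$ and $\{A_{E_2,j}\}_j$ are part of a single algebraically independent system of coordinates, so $D_{E_1}\cap D_{E_2}$ is contained in a finite union of codimension-$2$ subvarieties of the form $\{A_{E_1,i}=A_{E_2,j}=0\}$. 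Unioning over all pairs $(E_1,E_2)$ of diagonals still yields a Zariski-closed subset of codimension at least $2$ in $\mathcal{A}_{G,\Sigma}^\times$.

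Having established $\operatorname{codim}_{\mathcal{A}_{G,\Sigma}^\times} Z \geq 2$, I would invoke the normality of $\mathcal{A}_{G,\Sigma}^\times$ (which follows from its description as a quotient of a smooth variety, or directly from the cluster description) together with Hartogs' theorem to conclude that the restriction map $\mathcal{O}(\mathcal{A}_{G,\Sigma}^\times) \to \mathcal{O}(\widetilde{\mathcal{A}}_{G,\Sigma}^\times)$ is an isomorphism. The main technical point, and the step I expect to require the most care, is verifying that each $D_{E'}$ really has pure codimension $1$ (so that intersections of two distinct such divisors drop by exactly one more codimension rather than by zero) and that $\mathcal{A}_{G,\Sigma}^\times$ is normal; the former reduces to a local Bruhat-stratification computation, while the latter follows from the cluster $K_2$-structure since the transition maps between the charts $i_{\bm{\mathcal{T}}}$ are positive birational and $(\mathbb{C}^\times)^n$ is smooth.
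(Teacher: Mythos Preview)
Your approach is exactly the paper's: write the complement as $\bigcup_{E_1\neq E_2}(D_{E_1}\cap D_{E_2})$, assert codimension $\ge 2$, and apply Hartogs on the smooth variety $\mathcal{A}_{G,\Sigma}^\times$. The paper's own proof is three sentences and does not justify the codimension bound any further, so your added detail is welcome.

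One caution on that detail. You deduce that $\{A_{E_1,i}=A_{E_2,j}=0\}$ has codimension $2$ from the fact that the $A_{E_k,i}$ are ``part of a single algebraically independent system of coordinates'' in a cluster chart. But the cluster torus is precisely the locus where all these variables are nonzero, so the divisors $D_{E_k}$ miss that chart entirely; and in general, algebraic independence of two regular functions does not force their common zero locus to have codimension $\ge 2$ (take $xy$ and $xy^2$ on $\mathbb{A}^2$). What actually makes the step go through is that distinct cluster variables in a common seed share no irreducible factor---equivalently, their zero divisors share no component---which follows from the full-rank hypothesis on $\varepsilon$ established in \cref{full.rank.lem}. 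Alternatively, one can bypass cluster variables and argue geometrically: the evaluation map $\mathcal{A}_{G,\Sigma}^\times \to \Conf_2\A_G \times \Conf_2\A_G$ recording the flag pairs at $E_1$ and $E_2$ is smooth, so the preimage of the codimension-$2$ locus ``both pairs non-generic'' again has codimension $\ge 2$. (A terminological nit: for a diagonal $E'$ the edge coordinates $A_{E',i}$ are \emph{mutable}, not frozen, in the seed $\mathbf{i}_{\bm{\mathcal{T}}}$.)
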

\begin{proof}
For a pair $E_1,E_2$ of diagonals of ${\mathcal T}$, let $\mathcal{A}_{G, \Sigma}^{\mathcal{T},E_1,E_2}\subset \mathcal{A}_{G, \Sigma}^\times$ be the subspace such that the pairs of the decorated flags associated with $E_1$ and $E_2$ are not generic. 
The complement of $\widetilde{\mathcal{A}}_{G, \Sigma}^\times$ in ${\mathcal{A}}_{G, \Sigma}^\times$ is $\bigcup_{E_1,E_2}\mathcal{A}_{G, \Sigma}^{\mathcal{T},E_1,E_2}$, which is of codimension $\geq 2$. See \cref{app:codimension} for a detailed computation of the codimension. Therefore $\mathcal{O}(\widetilde{\mathcal{A}}_{G, \Sigma}^\times)=\mathcal{O}({\mathcal{A}}_{G, \Sigma}^\times)$.
\end{proof}

\begin{proof}[Proof of Theorem \ref{co=cl}]
Combining \cref{codim.lem,local.iso.q,full.rank.lem}, and \cref{upperbound=upper}, we have
\[
\mathcal{O}({\mathcal{A}}_{G, \Sigma}^\times)=\mathcal{O}(\widetilde{\mathcal A}_{G, \Sigma}^\times) = \bigcap_{E} \mathcal{O}(\mathcal{A}_{G,\Sigma}^{\mathcal{T},E})=\bigcap_{E} \mathscr{U}(\mathbf{i}_{\bm{\mathcal{T}},E})=\bigcap_{E} \overline{\mathscr{U}}(\mathbf{i}_{\bm{\mathcal{T}},E})= \overline{\mathscr{U}}(\mathbf{i}_{\bm{\mathcal{T}}})= \mathscr{U}_{\mathfrak{g}, \Sigma}.
\]
\end{proof}

Combining with \cref{cor:generation_by_Wilson_lines}, we get: 

\begin{cor}\label{cor:generation_UCA_Wilson_lines}
The upper cluster algebra $\mathscr{U}_{\mathfrak{g},\Sigma}$ is generated by the matrix coefficients of Wilson lines. Moreover if $\Sigma$ has at least two marked points, then $\mathscr{U}_{\mathfrak{g},\Sigma}$ is generated by matrix coefficients of simple Wilson lines and boundary Wilson lines.
\end{cor}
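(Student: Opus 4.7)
The proof is an immediate combination of the two main results that precede the statement, so the plan is very short. First I would invoke \cref{co=cl} to identify the upper cluster algebra with the coordinate ring:
\begin{equation*}
    \mathscr{U}_{\mathfrak{g},\Sigma} = \cO(\A_{G,\Sigma}^\times).
\end{equation*}
Once this identification is in hand, the first assertion follows directly from \cref{cor:generation_by_Wilson_lines}, which states that $\cO(\A_{G,\Sigma}^\times)$ is generated by matrix coefficients of Wilson lines $g_{[c]}$ along arbitrary framed arc classes $[c] \in \Pi_1(T'\Sigma,\bB^\pm)$.

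For the refined statement under the hypothesis $|\bM|\geq 2$, I would again simply appeal to the second half of \cref{cor:generation_by_Wilson_lines}, which guarantees that in this case the generating set can be taken to consist of matrix coefficients of simple Wilson lines (corresponding to simple framed arc classes $[c]:E_1^-\to E_2^-$ obtained as standard lifts of embedded transverse immersions with $E_1\neq E_2$) together with the boundary Wilson lines $g_{\sqrt{\fiber_E}^{\inn}}$ and $g_{\sqrt{\fiber_E}^{\out}}$ for $E\in\bB$. That refined generation is itself a consequence of \cref{lem:groupoid_generator}, which provides a finite generating set of $\Pi_1(T'\Sigma,\bB^\pm)$ consisting of simple framed arc classes and the half-fiber classes, combined with the internal multiplicativity \cref{prop:multiplicativity}.

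Since the argument is purely formal once \cref{co=cl} is available, there is no genuine obstacle; the only content has already been packaged into the preceding theorem (whose proof proceeds by the codimension-two covering argument) and into the generation statement \cref{cor:generation_by_Wilson_lines} (whose content lies in the closed embedding of $\A_{G,\Sigma}^\times$ into $\Hom(\Pi_1(T'\Sigma,\bB^\pm),G)$ and in \cref{prop:generate}). Thus the proof is a single-line concatenation of these inputs.
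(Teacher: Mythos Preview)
Your proposal is correct and matches the paper's approach exactly: the paper states the corollary immediately after \cref{co=cl} with the one-line justification ``Combining with \cref{cor:generation_by_Wilson_lines}, we get:'', which is precisely the concatenation you describe. The additional remarks you make about \cref{lem:groupoid_generator} and \cref{prop:multiplicativity} are accurate but already absorbed into \cref{cor:generation_by_Wilson_lines}, so they are not needed here.
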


\begin{rem}
As stated in \cite[Proposition 3.17 (i)]{GS18}, the upper cluster algebra no longer coincides with $\cO(\A_{G,\Sigma}^\times)$ when $\Sigma$ has punctures. 
\end{rem}

\subsection{Cluster $K_2$-coordinates on $\Conf_4^\times \A_G$}\label{subsec:cluster_quadrilateral}
For the computations in the next subsection, let us recall the cluster $K_2$-coordinates on the configuration space $\Conf_4^\times \A_G=\Conf_{w_0}^{w_0}\A_G$ from \cite[Section 7]{GS19}, which can be regarded as an open subspace of the moduli space $\A_{G,Q}$ for a quadrilateral $Q$. Here are notations from \cite[Section 5]{GS19}:
\begin{itemize}
    \item Given $w \in W$, we set
    \begin{align*}
        S(w)&:=\{ s \in S \mid w\alpha_s^\vee <0 \}, \\
        H(w)&:=\left\{\prod_{s \in S(w)}\alpha_s^\vee(b_s) \mid b_s \in \bG_m\right\}.
    \end{align*}
    Then $S(w)=S \ \Longleftrightarrow\  H(w) = H \ \Longleftrightarrow\  w=w_0$. 
    \item Given a reduced word $\bs=(s_1,\dots,s_l)$ of $w$, we get a sequence of distinct coroots 
    \begin{align*}
        \beta_k^\bs:=r_{s_l}\dots r_{s_{k+1}}\alpha_{s_k}^\vee, \quad k=1,\dots,l.
    \end{align*}
    They are precisely the positive coroots $\alpha^\vee$ such that $w\alpha^\vee$ are negative.
    \item Recall that we have 
    \begin{align*}
        \cO(\Conf_2 \A_G)\cong \bigoplus_{\lambda \in X^\ast(H)_+} \big(V(\lambda)\otimes V(\lambda^\ast)\big)^G,
    \end{align*}
    and consider the function $\Delta_s \in \cO(\Conf_2 \A_G)$ for $s \in S$ such that $\Delta_s \in \big(V(\varpi_s)\otimes V(\varpi_s^\ast)\big)^G$ and $\Delta_s([U^+],[U^-])=1$. We have the relation 
    \begin{align*}
        \Delta_{u\varpi_s,v\varpi_s}(g)=\Delta_s(g\overline{v}.[U^+],\overline{u}.[U^-])
    \end{align*}
    for $g \in G$ and $u,v \in W$.
\end{itemize}

The configuration space $\Conf_4^\times \A_G$ parametrizes the $G$-orbits of quadruples $(\flA^L,\flA_L,\flA_R,\flA^R)$ of decorated flags such that the cyclically consecutive pairs $(\flA^L,\flA_L)$, $(\flA_L,\flA_R)$, $(\flA_R,\flA^R)$, $(\flA^R,\flA^L)$ are generic. Such a quadruple $(\flA^L,\flA_L,\flA_R,\flA^R)$ is illustrated as 
\begin{align*}
    \begin{tikzpicture}[>=latex]
    \draw[thick] (-1,1) --node[midway,left]{$w_0$} (-1,-1); 
    \draw[thick] (1,1) --node[midway,right]{$w_0$} (1,-1);
    \draw[thick] (-1,1) --node[midway,above]{$w_0$} (1,1);
    \draw[thick] (-1,-1) --node[midway,below]{$w_0$} (1,-1);
    {\color{red}
    \filldraw(-1,1) circle(2pt);
    \filldraw(1,1) circle(2pt);
    \filldraw(-1,-1) circle(2pt);
    \filldraw(1,-1) circle(2pt);
    }
    \draw(-1,1) node[above left]{$\flA^L$};
    \draw(1,1) node[above right]{$\flA^R$};
    \draw(-1,-1) node[below left]{$\flA_L$};
    \draw(1,-1) node[below right]{$\flA_R$};
    \end{tikzpicture}
\end{align*}

\begin{lem}[cf.~{\cite[Lemma-Definition 5.3]{GS19}}]\label{lem:decorated_chain}
Let $(\flA_l,\flA_r)$ be a generic pair of decorated flags, and $\bs=(s_1,\dots,s_N)$ a reduced word of $w_0$. Then there exists a unique chain $\flA_l=\flA_0\xleftarrow{s_1} \flA_1\xleftarrow{s_2}\dots\xleftarrow{s_N} \flA_N=\flA_r$ of decorated flags such that
\begin{itemize}
    \item $w(\flA_k,\flA_{k-1})=r_{s_k}$ for $k=1,\dots,N$, and
    \item $h(\flA_k,\flA_{k-1})=\alpha_{s_k}^\vee(c_k)$. 
\end{itemize}
Here $c_k \in \bG_m$ is given by
\begin{align*}
    c_k:=\begin{cases}
    h(\flA_r,\flA_l)^{\varpi_t} & \mbox{if $\beta_k^\bs=\alpha_t^\vee$ is simple}, \\
    1 & \mbox{otherwise}.
    \end{cases}
\end{align*}
\end{lem}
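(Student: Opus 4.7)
The plan is to prove uniqueness and existence separately. For uniqueness I would proceed in two stages. First, the underlying flag chain $\flB_k := \pi(\flA_k)$ with $w(\flB_k,\flB_{k-1}) = r_{s_k}$ is forced by the generic pair $(\flB_l,\flB_r)$ and the reduced word $\bs$: any pair of flags at maximal Weyl distance $w_0$ admits a unique gallery of the prescribed type, which is a standard fact from Bruhat theory (extractable from Deodhar's cell decomposition, or by an induction based on the observation that $\flB_{k-1}$ is forced to be the unique flag at distance $r_{s_k}$ from $\flB_k$ that is also jointly compatible with $\flB_l$ along the remaining prefix of $\bs$). Second, once the underlying flags are fixed, the prescription $h(\flA_k,\flA_{k-1}) = \alpha_{s_k}^\vee(c_k)$ uniquely pins down $\flA_k$ inside the $H$-torsor $\pi^{-1}(\flB_k)$ given $\flA_{k-1}$; iterating from $\flA_0 = \flA_l$ then forces the whole chain.

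For existence, I would construct the chain iteratively with the prescribed $c_k$'s and verify that the terminal decoration agrees with $\flA_r$. By $G$-equivariance, place $(\flA_l,\flA_r)$ into a standard Bruhat form with $\flA_l = [U^+]$, and parameterize intermediate decorations as $\flA_k = \overline{r}_{s_1}\cdots\overline{r}_{s_k}\,g_k.[U^+]$ for a sequence $g_k \in H$. Translating the condition $h(\flA_k,\flA_{k-1}) = \alpha_{s_k}^\vee(c_k)$ through the standard form for consecutive pairs, together with the sign identity $\overline{r}_{s_k}^2 = \alpha_{s_k}^\vee(-1)$, yields a recurrence for $g_k$ in terms of $g_{k-1}$, $c_k$, and a Weyl conjugation. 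Unrolling this recurrence expresses the terminal $g_N$ as an ordered product of Weyl conjugates of the factors $\alpha_{s_k}^\vee(c_k)$ along with an accumulated central $s_G$-twist, so one is reduced to checking that this product matches the $H$-parameter of $\flA_r$.

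The combinatorial key to the endpoint check is that $\beta_k^{\bs} := r_{s_1}\cdots r_{s_{k-1}}(\alpha_{s_k}^\vee)$ enumerates every positive coroot exactly once as $k = 1,\dots,N$. Since $c_k = 1$ unless $\beta_k^{\bs} = \alpha_t^\vee$ is simple, only the $r$ simple positions contribute non-trivial factors, and the particular choice $c_k = h(\flA_r,\flA_l)^{\varpi_t^\vee}$ is tailored so that these $r$ contributions recombine precisely into the unique factorization of $h(\flA_r,\flA_l)$ in the simple-coroot basis of the simply-connected torus $H$. Structurally, the non-simple positions cancel telescopically in the Weyl-conjugation pattern, leaving only the simple positions as genuine coordinates.

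The main obstacle I anticipate is the careful bookkeeping of the central $s_G$-twists that accumulate at each reduction to standard form (each of which costs a factor of $\alpha_{s_k}^\vee(-1)$ via $\overline{r}_{s_k}^{-1} = \alpha_{s_k}^\vee(-1)\overline{r}_{s_k}$). These corrections should organize cleanly because the lift $\vw = \overline{r}_{s_1}\cdots\overline{r}_{s_N}$ is independent of the reduced word, so the cumulative signs coalesce into $s_G = \vw^2$, which is exactly the central twisting already baked into the conventions governing the $h$-invariants of pairs of decorated flags.
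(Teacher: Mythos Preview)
Your proposal is correct and follows essentially the same approach as the paper: reduce to a standard configuration, parametrize the intermediate decorated flags by $H$-valued parameters, derive and solve a first-order recurrence, and use that the $\beta_k^{\bs}$ enumerate all positive coroots so that only the simple positions contribute nontrivially.

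The one notable difference is organizational. The paper chooses the standard form $(\flA_r,\flA_l)=(h.[U^+],\vw.[U^+])$ and parametrizes from the $N$-end as $\flA_k=\overline{r}_{s_N}\cdots\overline{r}_{s_{k+1}}h_k.[U^+]$; with this choice the recurrence reads simply $r_{s_k}(h_{k-1}^{-1})h_{k-1}=\alpha_{s_k}^\vee(c_k)$, whose unique solution is $h_k=\prod_{j=1}^k r_{s_k}\cdots r_{s_{j+1}}\alpha_{s_j}^\vee(c_j)$, and no explicit $s_G$-bookkeeping is needed. Your choice $\flA_l=[U^+]$ with indexing from the $0$-end is perfectly valid but is what generates the sign corrections you anticipate; the paper's gauge simply hides them. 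Also, the paper does not separate uniqueness and existence: it writes down the explicit chain, observes the recurrence has a unique solution, and is done---your more careful two-stage uniqueness argument (gallery for flags, then iterated $h$-invariant for decorations) is fine but more than is required.
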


\begin{proof}
When $(\flA_r,\flA_l)=(h.[U^+],\vw.[U^+])$ sits in the standard configuration, the intermediate flags are given by
\begin{align*}
    \flA_k:=\overline{r}_{s_N}\dots \overline{r}_{s_{k+1}} h_k.[U^+],
\end{align*}
where $h_k:=u_k\left(\prod_{s \in S(w_0 u_k)}\alpha_s^\vee(h(\flA_r,\flA_l)^{\varpi_s})\right)$ with $u_k:=r_{s_{k+1}}\dots r_{s_N}$. 
Indeed, we have
\begin{align*}
    [\flA_k,\flA_{k-1}] = [h_k.[U^+], \overline{r}_{s_k}h_{k-1}.[U^+]] = [r_{s_k}(h_{k-1}^{-1})h_{k}.[U^+], \overline{r}_{s_k}.[U^+]]
\end{align*}
and the unique solution of $r_{s_k}(h_{k-1}^{-1})h_{k}=\alpha_{s_k}^\vee(c_k)$ for $k=1,\dots,N$ is given by
\begin{align*}
    h_k&=\prod_{j=1}^k r_{s_k}\dots r_{s_{j+1}}\alpha_{s_j}^\vee(c_j).
\end{align*}
Then applying $u_k^{-1}$ to both sides, we get $u_k^{-1}(h_k)=\prod_{j=1}^k \beta_j^{\bs}(c_j)=\prod_{t \in S(w_0 u_k)} \alpha_t^\vee(h^{\varpi_t})$, as desired. The intermediate decorated flags for a pair $g.(\flA_r,\flA_l)$ are given by the translates $g.\flA_k$ for $g \in G$. 
\end{proof}
Now we choose a double reduced word $\bs$ of $(w_0,w_0) \in W \times W$. The subword of $\bs$ consisting of letters in $S$ gives a reduced word $\bs_{\bullet}=(s_1,\dots,s_N)$ of $w_0$; similarly, the $\overline{S}$-part gives another reduced word $\bs^{\bullet}=(s^1,\dots,s^N)$. 
Given a quadruple $(\flA^L,\flA_L,\flA_R,\flA^R)$ whose orbit lies in $\Conf_4^\times \A_G$, we apply \cref{lem:decorated_chain} to the pair $(\flA^R,\flA^L)$ with the word $\bs^\bullet$, and to the pair $(\flA_L,\flA_R)$ with the word $\bs_\bullet^\ast$. Then we get the following chains of decorated flags:
\begin{align}
    &\flA^L=\flA^N\xrightarrow{s^N} \flA^{N-1}\xrightarrow{s^{N-1}}\dots\xrightarrow{s^1} \flA^0=\flA^R, \qquad w(\flA^k,\flA^{k-1})=r_{s^k}, \quad h(\flA^k,\flA^{k-1})=\alpha_{s^k}^\vee(c^k),\label{eq:chain_upper} \\
    &\flA_L=\flA_0\xleftarrow{s_1^\ast} \flA_1\xleftarrow{s_2^\ast} \dots\xleftarrow{s_N^\ast} \flA_N=\flA_R, \qquad w(\flA_k,\flA_{k-1})=r_{s_k^\ast}, \quad h(\flA_k,\flA_{k-1})=\alpha_{s_k^\ast}^\vee(c_k),\label{eq:chain_lower}
\end{align}
where $c^k$ and $c_k$ are given by
\begin{align*}
    c^k:=\begin{cases}
    h(\flA^L,\flA^R)^{\varpi_t} & \mbox{if $\beta_k^{\bs^\bullet}=\alpha_t^\vee$ is simple}, \\
    1 & \mbox{otherwise},
    \end{cases}
\end{align*}
and 
\begin{align*}
    c_k:=\begin{cases}
    h(\flA_R,\flA_L)^{\varpi_u} & \mbox{if $\beta_k^{\bs_\bullet^\ast}=\alpha_u^\vee$ is simple}, \\
    1 & \mbox{otherwise},
    \end{cases}
\end{align*}
respectively. Using these chains, the double reduced word $\bs$ gives rise to a decomposition of the configuration $[\flA^L,\flA_L,\flA_R,\flA^R]$ into elementary configurations, as explained by the following example.
\begin{ex}[Type $A_2$]
The double reduced word $\bs=(1,\overline{1},\overline{2},2,\overline{1},1)$ gives rise to the decomposition below. The locations where the coroot $\beta_k^{\bs^\bullet}$ or $\beta_k^{\bs_\bullet}$ becomes simple are shown in green. 
\begin{align*}
\begin{tikzpicture}[scale=1.2]
\draw(0,-1) -- (0,1) -- (3,1) -- (3,-1) --cycle;
\draw[dashed] (0,1) -- (1,-1) -- (1,1);
\draw[dashed] (1,-1) -- (2,1) -- (2,-1) -- (3,1);
\draw[green,thick] (0,1) -- (1,1);
\draw[green,thick] (2,1) -- (3,1);
\draw[green,thick] (0,-1) -- (1,-1);
\draw[green,thick] (2,-1) -- (3,-1);
\foreach \i in{0,1,2,3}
{
\filldraw[red] (\i,1) circle(1.6pt);
\filldraw[red] (\i,-1) circle(1.6pt);
}
\node[above=0.2em,scale=0.8] at (0.5,-1) {$s_1^\ast$}; 
\node[above=0.2em,scale=0.8] at (1.5,-1) {$s_2^\ast$};
\node[above=0.2em,scale=0.8] at (2.5,-1) {$s_1^\ast$};
\node[below=0.2em,scale=0.8] at (0.5,1) {$s_1$};
\node[below=0.2em,scale=0.8] at (1.5,1) {$s_2$};
\node[below=0.2em,scale=0.8] at (2.5,1) {$s_1$};
\node[below=0.2em,scale=0.9] at (0,-1) {$\flA_0$};
\node[below=0.2em,scale=0.9] at (1,-1) {$\flA_1$};
\node[below=0.2em,scale=0.9] at (2,-1) {$\flA_2$};
\node[below=0.2em,scale=0.9] at (3,-1) {$\flA_3$};
\node[above=0.2em,scale=0.9] at (0,1) {$\flA^3$};
\node[above=0.2em,scale=0.9] at (1,1) {$\flA^2$};
\node[above=0.2em,scale=0.9] at (2,1) {$\flA^1$};
\node[above=0.2em,scale=0.9] at (3,1) {$\flA^0$};
\end{tikzpicture}
\end{align*}
\end{ex}

Then for each pair $(\flA^k,\flA_l)$ connected by a dashed line or a vertical solid line, we consider the functions $\Delta_s(\flA^k,\flA_l)$ for $s \in S$. These functions are not distinct: for example if we have a triple
\begin{align*}
\begin{tikzpicture}
\draw(0,-1) -- (1,-1);
\draw[dashed](0,-1) -- (0.5,1) -- (1,-1);
\node[above=0.2em,scale=0.9] at (0.5,1) {$\flA^k$};
\node[below=0.2em,scale=0.9] at (0,-1) {$\flA_l$};
\node[below=0.2em,scale=0.9] at (1,-1) {$\flA_{l+1}$};
\node[above=0.2em,scale=0.9] at (0.5,-1) {$s_k^\ast$};
\filldraw[red] (0,-1) circle(1.6pt);
\filldraw[red] (0.5,1) circle(1.6pt);
\filldraw[red] (1,-1) circle(1.6pt);
\end{tikzpicture},
\end{align*}
we have $\Delta_s(\flA^k,\flA_l) = \Delta_s(\flA^k,\flA_{l+1})$ for $s \neq s_k^\ast$. We have a similar relation for a down-pointing triangle. See \cite[Lemma 7.13]{GS19} for details. Collectively, we get $|S|+\#(\text{dashed lines})+1$ distinct functions among $\Delta_s(\flA^k,\flA_l)$'s. They can be assigned to the vertices of the quiver $\bJ(\bs)$ in \cite[Definition 7.5]{GS19}. We also have additional functions 
$h(\flA^{k},\flA^{k-1})^{\varpi_t}=h(\flA^L,\flA^R)^{\varpi_t}$ if $\beta_k^{\bs^\bullet}=\alpha_t^\vee$ is simple, and $h(\flA_l,\flA_{l-1})^{\varpi_u^\ast}=h(\flA_R,\flA_L)^{\varpi_u^\ast}$ if $\beta_k^{\bs_\bullet^\ast}=\alpha_u^\vee$ is simple. 
They are assigned to the green lines, and supply the coordinates on the remaining ``extra'' vertices in the quiver $\overline{\bJ}(\bs)$ in \cite[Definition 7.5]{GS19}. The quiver $\overline{\bJ}(\bs)$ together with the coordinates assigned to its vertices form a cluster $K_2$-seed in the field $\mathcal{K}(\Conf_4^\times \A_G)$. 

We remark that these cluster coordinates are regular functions on a larger space $\Conf_\inn^\out \A_G$ consisting of the $G$-orbits of quadruples $(\flA^L,\flA_L,\flA_R,\flA^R)$ such that the top and bottom pairs $(\flA^L,\flA^R)$ and $(\flA_L,\flA_R)$ are generic.

\subsection{Generalized minors of simple Wilson lines are cluster variables}\label{subsec:computation_minors}
Let $[c]:E_1^- \to E_2^-$ be a simple framed arc class and fix its band neighborhood $B_c$. The band $B_c$ can be regarded as a marked surface (\emph{i.e.}, a disk with four marked points $m^L,m_L,m_R,m^R$) as shown in \cref{fig:band_nbd}. 
By \cref{rem:locality} (1)(2), the Wilson line $g_{[c]}$ can be computed on the moduli space $\A_{G,B_c;\{E_1,E_2\}}^\times\cong \Conf_\inn^\out \A_G$, where the identification is determined by the branch cut and the correspondence of flags as shown in \cref{fig:simple_configuration}. For a weight $\lambda=\sum_{s \in S}a_s \varpi_s$, we write $[\lambda]_+:=\sum_{s \in S}[a_s]_+\varpi_s$, where $[a_s]_+:= \max\{0, a_s\}$.

\begin{prop}\label{prop:simple_minors}
Each generalized minor of a simple Wilson line $g_{[c]}$ is a single cluster variable in $\mathscr{A}_{\mathfrak{g},\Sigma}$ multiplied by inverses of several frozen variables. Specifically, we have
\begin{align}\label{eq:minor_Wilson_line}
    \Delta_{u_{>l}\varpi_s,v_{>k}\varpi_s}(g_{[c]}) = \frac{\Delta_s(\flA^k,\flA_l)}{h(\flA_R,\flA_L)^{[u_{>l}\varpi_s]^\ast_+}h(\flA^L,\flA^R)^{[v_{>k}\varpi_s]_+}}
\end{align}
for all $k,l=0,\dots,N$ and $s \in S$, where  $u_{>l}:=r_{s_N}\dots r_{s_{l+1}}$ and $v_{>k}:=r_{s^N}\dots r_{s^{k+1}}$
for any double reduced word $\bs$ of $(w_0,w_0) \in W \times W$.
\end{prop}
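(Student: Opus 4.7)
The plan is to reduce the claim to an explicit computation on the band neighborhood $B_c$, which is a quadrilateral with the four corners $m^L,m_L,m_R,m^R$. By \cref{rem:locality}(1) and (2), the Wilson line $g_{[c]}$ factors through the restriction $\A_{G,\Sigma}^\times\to \A_{G,B_c;\{E_1,E_2\}}^\times$, and under the identification $\A_{G,B_c;\{E_1,E_2\}}^\times\cong\Conf_\inn^\out \A_G$ using a branch cut consistent with the standard lift of $\overline{c}$, we may perform the computation directly on a single configuration $[\flA^L,\flA_L,\flA_R,\flA^R]$. Once the formula \eqref{eq:minor_Wilson_line} is established on this open substack, the first assertion (single cluster variable times inverses of frozens) follows immediately from \cref{subsec:cluster_quadrilateral}, where the $\Delta_s(\flA^k,\flA_l)$ are cluster variables in the seed attached to $\bs$ and the $h$-invariants $h(\flA_R,\flA_L)^{\varpi_t}$ and $h(\flA^L,\flA^R)^{\varpi_u}$ are frozen variables.

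Next I would compute $g_{[c]}$ explicitly on the quadrilateral. Choose a $G$-translate of the configuration such that the pinning at $E_1^-$ equals $p_\std$; by the definition of the pinning (parallel transport along $\iota_\out(E_1)$ followed by a clockwise half-turn in the fiber, as in \cref{ex:triangle_case}), this amounts to normalizing $\flA_L$ and the flag $\pi(\flA^L)$. Then the defining identity $p_{E_2^-}=g_{[c]}\cdot p_\std^\ast$ recovers $g_{[c]}$ as the element of $G$ carrying $p_\std^\ast$ to the pinning obtained from $\flA_R$ and $\pi(\flA^R)$ (parallel-transported to $E_2^-$ along the standard lift of $\overline{c}$). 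This generalizes the triangular computation $g_{[c]}=u_+h_2^{-1}s_G$ in \cref{ex:triangle_case} to the quadrilateral, producing a closed form for $g_{[c]}$ in terms of the four decorated flags.

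For the matrix coefficient, expand
\[
\Delta_{u_{>l}\varpi_s,v_{>k}\varpi_s}(g_{[c]})=\langle\overline{u_{>l}}.f_{\varpi_s^*},\;g_{[c]}\,\overline{v_{>k}}.v_{\varpi_s}\rangle,
\]
and apply \cref{lem:decorated_chain} to $(\flA^R,\flA^L)$ with $\bs^\bullet$ and to $(\flA_L,\flA_R)$ with $\bs_\bullet^\ast$ to obtain the chains \eqref{eq:chain_upper} and \eqref{eq:chain_lower}. Using $v_{>k}=w(\flA^L,\flA^k)$ and the analogous identity for $u_{>l}$, the vector $g_{[c]}\overline{v_{>k}}.v_{\varpi_s}$ is proportional to the highest-weight vector attached to $\flA^k$, and $\overline{u_{>l}}.f_{\varpi_s^*}$ is proportional to the lowest-weight covector attached to $\flA_l$. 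The pairing is therefore $\Delta_s(\flA^k,\flA_l)$, multiplied by the product of the scalars $c^{k+1},\ldots,c^N$ and $c_{l+1},\ldots,c_N$ arising from the normalization in \cref{lem:decorated_chain}. By that lemma, each such scalar is either $1$ or a power of $h(\flA^L,\flA^R)^{\varpi_t^\vee}$ (resp.\ $h(\flA_R,\flA_L)^{\varpi_u^\vee}$) according to whether $\beta_j^{\bs^\bullet}$ (resp.\ $\beta_j^{\bs_\bullet^\ast}$) is simple. Collecting the resulting exponents along the relevant positions recovers precisely $h(\flA^L,\flA^R)^{[v_{>k}\varpi_s]_+}$ and $h(\flA_R,\flA_L)^{[u_{>l}\varpi_s]^\ast_+}$, completing \eqref{eq:minor_Wilson_line}.

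The hard part is the explicit identification of $g_{[c]}$ in the quadrilateral setting, where the orientation conventions (the standard lift, the clockwise half-turn in the fiber, the twisting by $\vw^{-1}$ and $s_G$) must be handled carefully so that the two chain parametrizations glue to the correct element of $G$. Once this geometric bookkeeping is in place, the minor computation reduces to a combinatorial exercise of matching $[u_{>l}\varpi_s]_+^\ast$ and $[v_{>k}\varpi_s]_+$ to the set of indices $j$ at which $\beta_j^{\bs_\bullet^\ast}$ and $\beta_j^{\bs^\bullet}$ become simple coroots, respectively.
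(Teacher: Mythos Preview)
Your proposal follows essentially the same route as the paper's proof: reduce to the quadrilateral $B_c$ via \cref{rem:locality}, normalize so that $p_{E_1^-}=p_\std$, identify $g_{[c]}$ explicitly, and then unwind the chain construction of \cref{lem:decorated_chain} to relate the generalized minor to $\Delta_s(\flA^k,\flA_l)$ and the $h$-invariants. The paper carries out the step you call the ``hard part'' by writing down the unique representative
\[
(\flA^L,\flA_L,\flA_R,\flA^R)=\bigl(gh.[U^+],\,[U^+],\,\vw^{-1}h'.[U^+],\,g\vw.[U^+]\bigr),\qquad g\in G^{w_0,w_0},\ h,h'\in H,
\]
and reading off directly that $g_{[c]}=gs_G$; everything else is then a straightforward computation with the explicit formulas for $\flA^k$ and $\flA_l$ from the proof of \cref{lem:decorated_chain}.

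One small correction to your outline: the proportionality runs the other way. The canonical weight vector attached to $\flA^k$ is $g\overline{v_{>k}}h^k.v_{\varpi_s}=(h^k)^{\varpi_s}\cdot g\overline{v_{>k}}.v_{\varpi_s}$, so $\Delta_s(\flA^k,\flA_l)$ equals the minor \emph{times} $(h^k h_l^\ast)^{\varpi_s}$, not the minor times the $c$-scalars. Hence the $h$-factors land in the denominator of \eqref{eq:minor_Wilson_line}, and the combinatorial identity you describe at the end is exactly the computation $(h^k)^{\varpi_s}=h(\flA^L,\flA^R)^{[v_{>k}\varpi_s]_+}$ (and its analogue for $h_l$) carried out in the paper.
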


\begin{figure}[htbp]
\begin{tikzpicture}
\filldraw[fill=pink!30,draw=red,dashed] (0,0.7) -- (4,0.7) -- (4,-0.7) -- (0,-0.7) --cycle;
\fill[gray!20] (0,1.5) -- (-0.2,1.5) -- (-0.2,-1.5) -- (0,-1.5) --cycle;
\fill[gray!20] (4,1.5) -- (4+0.2,1.5) -- (4+0.2,-1.5) -- (4,-1.5) --cycle;
\draw[thick] (0,1.5) -- (0,-1.5);
\draw[thick] (4,-1.5) -- (4,1.5);
\filldraw(0,0.7) circle(1.5pt); 
\filldraw(0,-0.7) circle(1.5pt);
\filldraw(4,0.7) circle(1.5pt);
\filldraw(4,-0.7) circle(1.5pt);
\draw[red,thick,->-={0.45}{}] (0,0) --node[midway,below=0.2em]{$c$} (4,0);
{\color{red}
\foreach \i in {90,30,-30}
\tangent{4,0}{\i}{0.3};
\draw[thick,dotted] (4,-0.3) arc(-90:90:0.3);
\foreach \x in {0.5,1,1.5,2,2.5,3,3.5}
\tangent{\x,0}{90}{0.3};
}
\fill[myblue] (0,0) circle(1.5pt);
\draw[myblue,very thick,-latex] (0,0)--++(0,0.4);
\fill[myblue] (4,0) circle(1.5pt);
\draw[myblue,very thick,-latex] (4,0)--++(0,-0.4);
\draw[myorange,thick,decorate,decoration={snake,amplitude=2pt,pre length=2pt,post length=0pt}] (2.5,-0.3) --++(0,-1.5);
\draw[thick,-latex](0,-0.7) --++(-0.4,0) node[left,anchor=east]{$\vw^{-1}h'.[U^+]=\flA_R$};
\draw[thick,-latex](0,0.7) --++(-0.4,0) node[left,anchor=east]{$[U^+]=\flA_L$};
\draw[thick,-latex](4,-0.7) --++(0.4,0) node[right,anchor=west]{$\flA^R=g\vw.[U^+]$};
\draw[thick,-latex](4,0.7) --++(0.4,0) node[right,anchor=west]{$\flA^L=gh.[U^+]$};
\end{tikzpicture}
    \caption{Computation of the Wilson line $g_{[c]}$ on $\A_{G,B_c}^\times \cong \Conf_4^\times \A_G$.}
    \label{fig:simple_configuration}
\end{figure}
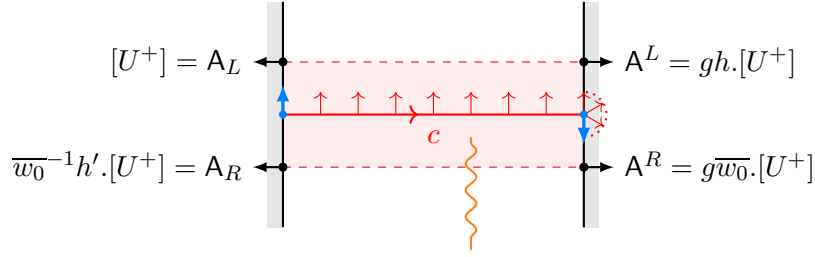

\begin{proof}
Any $G$-orbit $[\flA^L,\flA_L,\flA_R,\flA^R]$ contains a unique representative of the form
\begin{align*}
    (\flA^L,\flA_L,\flA_R,\flA^R)= (gh.[U^+], [U^+],\vw^{-1}h'.[U^+],g\vw.[U^+])
\end{align*}
for $h,h' \in H$ and $g \in G^{w_0,w_0}$. In other words, we have an isomorphism $H \times H\times G^{w_0,w_0} \xrightarrow{\sim} \Conf_4^\times \A_G$. 
In this parametrization, we have $h(\flA^L,\flA^R)=h$ and $h(\flA_R,\flA_L)=h'$. Moreover, $(\flA_L,\pi(\flA_R))=p_\std$ and $(\flA^R,\pi(\flA^L))=g\vw.p_\std=gs_G.p_\std^\ast$; hence $g_{[c]}=gs_G$. 

Let $\bs$ be a double reduced word of $(w_0,w_0) \in W \times W$. 
Then by the proof of \cref{lem:decorated_chain}, we explicitly get the associated chains $(\flA^k)_k$, $(\flA_l)_l$ of decorated flags in \eqref{eq:chain_upper} and \eqref{eq:chain_lower} as 
\begin{align*}
    \flA^k= g\overline{r}_{s^N}\dots \overline{r}_{s^{k+1}} h^k.[U^+]
\end{align*}
with $h^k:=w^k\left(\prod_{s \in S(w_0 w^k)}\alpha_s^\vee(h^{\varpi_s})\right)$ and $w^k:=r_{s^{k+1}}\cdots r_{s^{N}}$, and 
\begin{align*}
    \flA_l= \vw^{-1}\overline{r}_{s_N^\ast}\dots \overline{r}_{s_{l+1}^\ast} h_l.[U^+]
\end{align*}
with $h_l:=w_{l}\left(\prod_{s \in S(w_0 w_{l})}\alpha_s^\vee({h'}^{\varpi_s})\right)$ and $w_l:=r_{s_{l+1}^\ast} \cdots r_{s_N^\ast}$. Then we get
\begin{align*}
    \Delta_s(\flA^k,\flA_l) &= \Delta_s\left(g\overline{r}_{s^N}\dots \overline{r}_{s^{k+1}}.[U^+], \vw^{-1}\overline{r}_{s_N^\ast}\dots \overline{r}_{s_{l+1}^\ast}.[U^+]\right)\cdot (h^k h_l^\ast)^{\varpi_s} \\
    &= \Delta_s\left(g\overline{r}_{s^N}\cdots \overline{r}_{s^{k+1}}.[U^+], s_G\overline{r}_{s_N}\cdots \overline{r}_{s_{l+1}}.[U^-]\right)\cdot (h^k h_l^\ast)^{\varpi_s} \\
    &=\Delta_{r_{s_N}\cdots r_{s_{l+1}}.\varpi_s,r_{s^N}\cdots r_{s^{k+1}}\varpi_s}(gs_G)\cdot (h^k h_l^\ast)^{\varpi_s}.
\end{align*}
Thus we get
\begin{align*}
    \Delta_{u_{>l}\varpi_s,v_{>k}\varpi_s}(g_{[c]}) = (h^k h_l^\ast)^{-\varpi_s} \cdot \Delta_s(\flA^k,\flA_l).
\end{align*}
The frozen variables are computed as follows:
\begin{align*}
    &(h^k)^{\varpi_s}=\prod_{t \in S(w_0 w^k)}\alpha_t^\vee(h^{\varpi_t})^{(w^k)^{-1}\varpi_s} = \prod_{t \in S}\alpha_t^\vee({h}^{\varpi_t})^{[(w^k)^{-1}\varpi_s]_+} = h(\flA^L,\flA^R)^{[v_{>k}\varpi_s]_+},  \\
    &h_l^{\varpi_s^\ast}=\prod_{t \in S(w_0 w_l)}\alpha_t^\vee({h'}^{\varpi_t})^{w_l^{-1}\varpi_s^\ast} = \prod_{t \in S}\alpha_t^\vee({h'}^{\varpi_t})^{[w_l^{-1}\varpi_s^\ast]_+} = h(\flA_R,\flA_L)^{[u_{>l}^\ast\varpi_s^\ast]_+} = h(\flA_R,\flA_L)^{[u_{>l}\varpi_s]^\ast_+}.
\end{align*}
Here for the second equality in the first string: note that writing $(w^k)^{-1}\varpi_s = \sum_{t \in S}c_t \varpi_t$ with $c_t \in \Z$, we have
\begin{align*}
    \langle \varpi_s,\ w^k.\alpha_t^\vee \rangle = \langle (w^k)^{-1}.\varpi_s,\ \alpha_t^\vee \rangle = c_t.
\end{align*}
Hence $t \in S(w_0w^k) \Longleftrightarrow w^k.\alpha_t^\vee >0 \Longleftrightarrow c_t >0$. The second equality in the second string is similarly obtained. Thus the assertion is proved. 
\end{proof}


\begin{rem}\label{rem:BFZ variable}
Notice that the word $\bs_\bullet^\ast$ is read in the reversed order when we ``scan'' the distances of intermediate flags from the bottom to the top in \cref{fig:simple_configuration}. Therefore our seed associated with the double reduced word $\bs$ is the same as the one in \cite{BFZ} associated with $\bs^\bullet$ and $(\bs_\bullet^\ast)_\op:=(s_N^\ast,\dots,s_1^\ast)$. Letting $s'_{N-l+1}:=s_l$, the right-hand side of \eqref{eq:minor_Wilson_line} is written as $\Delta_{u'_{\leq N-l}\varpi_s,v_{>k}\varpi_s}(g_{[c]})$ with $u'_{\leq l'}:=r_{s'_1}\dots r_{s'_{l'}}$, which has the same form as the cluster variable in \cite[(2.11)]{BFZ}.
\end{rem}

\begin{rem}
When $m^L=m_L=:m$ and $\flA^L=\flA_L$, the situation reduces to the triangle case (\cref{ex:triangle_case}), where the parameters are identified as $h'=h_1$ and $g=u_+h_2^{-1}$. In the description of the cluster variables on $\Conf_3 \A_G$ given in \cite[Section 6]{GS19}, they are computed as follows:
\begin{align*}
    \Delta_{\lambda,\mu,\nu}(g\vw.[U^+],[U^+],\vw^{-1}h'.[U^+])
    &= \Delta_{\lambda,\mu,\nu}(g\vw^{-1}.[U^+],s_G.[U^+],\vw h'.[U^+]) \\
    &= \Delta_{\lambda,\mu,\nu}(g_{[c]}\vw.[U^+],[U^+],\vw.[U^+])\cdot s_G^\mu (h')^{\nu} \\
    &= F_{w\lambda}(g_{[c]}\vw.[U^+])\cdot (h')^{\nu} \\
    &= \Delta_{w\lambda,w_0\lambda}(g_{[c]})\cdot h(\flA_R,\flA_L)^{\nu}.
\end{align*}
This formula tells us how to express the cluster variables $\Delta_{\lambda,\mu,\nu}$ associated with an admissible triple $(\lambda,\mu,\nu)$ in terms of the matrix coefficients of Wilson lines.
\end{rem}

\subsection{Proof of \cref{main theorem}}\label{subsec:proof}
Recall from \cref{cor:generation_UCA_Wilson_lines} that the upper cluster algebra $\mathscr{U}_{\mathfrak{g},\Sigma}$ is generated by the matrix coefficients of simple Wilson lines and boundary Wilson lines, under the assumption that $\Sigma$ has at least two marked points. By \cref{prop:boundary condition} (1), the boundary Wilson line $g_{\sqrt{\fiber_E}^\out} \in H$ is the inverse of the $h$-invariant of the pair of decorated flags assigned to a boundary interval $E \in \mathbb{B}$. Hence its non-trivial matrix coefficients $(g_{\sqrt{\fiber_E}^\out})^{\varpi_s}$ for $s \in S$ are exactly the inverses of the $r$ many frozen variables on $E$. 
By \cref{prop:minors} and \cref{prop:simple_minors}, the matrix coefficients of simple Wilson lines are also cluster variables multiplied by the inverses of frozen variables. Therefore these generators are contained in the cluster algebra $\mathscr{A}_{\mathfrak{g},\Sigma}$, where the frozen variables are invertible. Thus we get $\mathscr{U}_{\mathfrak{g},\Sigma} \subset \mathscr{A}_{\mathfrak{g},\Sigma}$. 

\medskip
\section{Examples and skein description}
\label{section 5}
In this section, we  give an explicit description of the formula \eqref{eq:minor_Wilson_line} in the cases $G=SL_2,SL_3, Sp_4$, and their description in terms of the skein algebras studied in \cite{Muller,IY21,IY_C2}. Let us consider a simple Wilson line $g_{[c]}$, and use the notations in \cref{subsec:computation_minors}. We denote the frozen coordinates by 
\begin{align*}
    \clA_\inn^s:=h(\flA_R,\flA_L)^{\varpi_s^\ast} \quad\mbox{and}\quad \clA_\out^s:=h(\flA^L,\flA^R)^{\varpi_s}
\end{align*}
for $s \in S$.

\subsection{$SL_2$-case}
The vector representation $V(\varpi_1)=\mathbb{C}^2$ is minuscule. The weights in this representation are given by $\varpi_1$ and $-\varpi_1=w_0\varpi_1$. 
There is a unique choice of reduced word $\bs^\bullet=\bs_\bullet=(1)$ of $w_0 \in W$, which gives rise to the chains of decorated flags:
\begin{align*}
\begin{tikzpicture}
\filldraw[fill=pink!30,draw=red,dashed] (0,1.5) -- (3,1.5) -- (3,-.5) -- (0,-.5) --cycle;
\fill[gray!20] (0,2) -- (-0.2,2) -- (-0.2,-1) -- (0,-1) --cycle;
\fill[gray!20] (3,2) -- (3+0.2,2) -- (3+0.2,-1) -- (3,-1) --cycle;
\draw[thick] (0,2) -- (0,-1);
\draw[thick] (3,-1) -- (3,2);
\filldraw(0,1.5) circle(1.5pt); 
\filldraw(0,-.5) circle(1.5pt);
\filldraw(3,1.5) circle(1.5pt);
\filldraw(3,-.5) circle(1.5pt);
\draw[thick,-latex](0,-.5) --++(-0.4,0);
\draw[thick,-latex](0,1.5) --++(-0.4,0);
\draw[thick,-latex](3,-.5) --++(0.4,0);
\draw[thick,-latex](3,1.5) --++(0.4,0);
\node at (-1.2,-0.5) {$\flA_R=\flA_1$};
\node at (-1.2,1.5) {$\flA_L=\flA_0$};
\draw[->](-0.8,-0.2) --node[midway,left,scale=0.9]{$w_0$} (-0.8,1.2);
\node at (4.2,-.5) {$\flA^0=\flA^R$};
\node at (4.2,1.5) {$\flA^1=\flA^L$};
\draw[->](3.7,1.2) --node[midway,right,scale=0.9]{$w_0$}  (3.7,-0.2);
\end{tikzpicture}
\end{align*}


We have $[w_0\varpi_1]_+=0$ and $[\varpi_1]_+=\varpi_1$. 
Then by the formula \eqref{eq:minor_Wilson_line}, we get
\begin{align*}
    \Delta_{11}(g) &=\Delta_{\varpi_1,\varpi_1}(g) = \frac{\Delta_1(\flA^1,\flA_1)}{h(\flA^L,\flA^R)^{\varpi_1}h(\flA_R,\flA_L)^{\varpi_1}} = \frac{\Delta_1(\flA^1,\flA_1)}{\clA_\inn^1\clA_\out^1}, \\
    \Delta_{12}(g) &= \Delta_{\varpi_1,w_0\varpi_1}(g) = 
    \frac{\Delta_1(\flA^0,\flA_1)}{h(\flA^L,\flA^R)^{0}h(\flA_R,\flA_L)^{\varpi_1}} = \frac{\Delta_1(\flA^0,\flA_1)}{\clA_\inn^1}, \\
    \Delta_{21}(g) &= \Delta_{w_0\varpi_1,\varpi_1}(g) = \frac{\Delta_1(\flA^1,\flA_0)}{h(\flA^L,\flA^R)^{\varpi_1}h(\flA_R,\flA_L)^{0}} = \frac{\Delta_1(\flA^1,\flA_0)}{\clA_\out^1}, \\ 
    \Delta_{22}(g) &= \Delta_{w_0\varpi_1,w_0\varpi_1}(g) = \frac{\Delta_1(\flA^0,\flA_0)}{h(\flA^L,\flA^R)^{0}h(\flA_R,\flA_L)^{0}} =\Delta_1(\flA^0,\flA_0).
\end{align*}
Thus the Wilson line matrix $g_{[c]} \in SL_2(\cO(\A_{SL_2,\Sigma}^\times))$ is collectively given by
\begin{align}\label{eq:A1_matrix}
    g_{[c]} = \begin{pmatrix}
    \displaystyle\frac{\Delta_1(\flA^1,\flA_1)}{\clA_\inn^1\clA_\out^1} & 
    \displaystyle\frac{\Delta_1(\flA^0,\flA_1)}{\clA_\inn^1}\vspace{2mm} \\
    \displaystyle\frac{\Delta_1(\flA^1,\flA_0)}{\clA_\out^1} & \Delta_1(\flA^0,\flA_0)
    \end{pmatrix}.
\end{align}
The clusters corresponding to the two ideal triangulations are shown in \cref{fig:A1_clusters}. 

\begin{figure}[ht]
\begin{tikzpicture}
\begin{scope}[>=latex]
\draw[blue,very thin] (0,1.5) -- (3,1.5) -- (3,-1.5) -- (0,-1.5) --cycle;
\draw[blue,very thin] (3,1.5) -- (0,-1.5);
\foreach \i in {0,3} \foreach \j in {-1.5,1.5} \fill(\i,\j) circle(1.5pt);
{\color{mygreen}
\draw (1.5,0) circle(2pt) coordinate(A);
\draw (0,0) circle(2pt) coordinate(B);
\draw (1.5,-1.5) circle(2pt) coordinate(C);
\draw (3,0) circle(2pt) coordinate(D);
\draw (1.5,1.5) circle(2pt) coordinate(E);
\qarrow{A}{E};
\qarrow{E}{B};
\qarrow{B}{A};
\qarrow{A}{C};
\qarrow{C}{D};
\qarrow{D}{A};
}
\draw[myorange] (A)++(0,0.4) node[anchor=west,scale=0.9]{$\Delta_1(\flA^1,\flA_1)$};
\draw (C) node[below=0.2em,scale=0.9]{$\Delta_1(\flA^0,\flA_1)$};
\draw (E) node[above=0.2em,scale=0.9]{$\Delta_1(\flA^1,\flA_0)$};
\draw (B) node[left,scale=0.9]{$A_\inn^1$};
\draw (D) node[right,scale=0.9]{$A_\out^1$};
\draw[myorange] (A) circle(5pt);
\end{scope}
\draw[thick,|->] (4,0) -- (5,0);

\begin{scope}[xshift=6cm,>=latex]
\draw[blue,very thin] (0,1.5) -- (3,1.5) -- (3,-1.5) -- (0,-1.5) --cycle;
\draw[blue,very thin] (0,1.5) -- (3,-1.5);
\foreach \i in {0,3} \foreach \j in {-1.5,1.5} \fill(\i,\j) circle(1.5pt);
{\color{mygreen}
\draw (1.5,0) circle(2pt) coordinate(A);
\draw (0,0) circle(2pt) coordinate(B);
\draw (1.5,-1.5) circle(2pt) coordinate(C);
\draw (3,0) circle(2pt) coordinate(D);
\draw (1.5,1.5) circle(2pt) coordinate(E);
\qarrow{E}{A};
\qarrow{B}{C};
\qarrow{A}{B};
\qarrow{C}{A};
\qarrow{D}{E};
\qarrow{A}{D};
}
\draw (A)++(0,-0.5) node[anchor=west,scale=0.9]{$\Delta_1(\flA^0,\flA_0)$};
\draw (C) node[below=0.2em,scale=0.9]{$\Delta_1(\flA^0,\flA_1)$};
\draw (E) node[above=0.2em,scale=0.9]{$\Delta_1(\flA^1,\flA_0)$};
\draw (B) node[left,scale=0.9]{$A_\inn^1$};
\draw (D) node[right,scale=0.9]{$A_\out^1$};
\end{scope}
\end{tikzpicture}
    \caption{The two clusters in $\cO(\Conf_\inn^\out \A_{SL_2})$ related by a single mutation.}
    \label{fig:A1_clusters}
\end{figure}
Recall the skein model $\mathscr{A}_{\mathfrak{sl}_2,\Sigma} \cong \mathscr{S}_{\Sigma}^1[\partial^{-1}]$ given by Muller \cite{Muller}, where the cluster variables are identified with ideal arcs, and the clusters correspond to the ideal triangulations. In this language, the matrix \eqref{eq:A1_matrix} is written as
\begin{align*}
    g_{[c]} = \begin{pmatrix}
    \tikz{
    \smallsq;
    \draw[myblue,thick](0,0) to[bend right=20] (0,1); 
    \draw[myblue,thick] (1,0) to[bend left=20] (1,1);
    \draw[red,thick](0,0) -- (1,1);} & 
    \tikz{
    \smallsq;
    \draw[myblue,thick](0,0) to[bend right=20] (0,1); 
    \draw[red,thick](0,0) to[bend left=20] (1,0);} \vspace{2mm} \\
    \tikz{
    \smallsq;
    \draw[myblue,thick] (1,0) to[bend left=20] (1,1);
    \draw[red,thick](0,1) to[bend right=20] (1,1);} &
    \tikz{
    \smallsq;
    \draw[red,thick](0,1) -- (1,0);}
    \end{pmatrix}.
\end{align*}
Here the inverse of an ideal arc is shown in blue. 

\subsection{$SL_3$-case}
The vector representation $V(\varpi_1)=\mathbb{C}^3$ is minuscule. The weights in this representation are given by $\varpi_1$, $r_1\varpi_1=\varpi_2-\varpi_1$ and $r_2r_1\varpi_1=-\varpi_2=w_0\varpi_1$. 
Let us choose the reduced word $\bs^\bullet=\bs_\bullet=(1,2,1)$ of $w_0 \in W$, which gives rise to the following chains of decorated flags:
\begin{align*}
\begin{tikzpicture}
\filldraw[fill=pink!30,draw=red,dashed] (0,1.5) -- (3,1.5) -- (3,-1.5) -- (0,-1.5) --cycle;
\fill[gray!20] (0,2) -- (-0.2,2) -- (-0.2,-2) -- (0,-2) --cycle;
\fill[gray!20] (3,2) -- (3+0.2,2) -- (3+0.2,-2) -- (3,-2) --cycle;
\draw[thick] (0,2) -- (0,-2);
\draw[thick] (3,-2) -- (3,2);
\filldraw(0,1.5) circle(1.5pt); 
\filldraw(0,-1.5) circle(1.5pt);
\filldraw(3,1.5) circle(1.5pt);
\filldraw(3,-1.5) circle(1.5pt);
\draw[thick,-latex](0,-1.5) --++(-0.4,0);
\draw[thick,-latex](0,1.5) --++(-0.4,0);
\draw[thick,-latex](3,-1.5) --++(0.4,0);
\draw[thick,-latex](3,1.5) --++(0.4,0);
\node (L3) at (-0.7,-1.5) {$\flA_3$};
\node (L2) at (-0.7,-0.5) {$\flA_2$};
\node (L1) at (-0.7,0.5) {$\flA_1$};
\node (L0) at (-0.7,1.5) {$\flA_0$};
\draw (-0.7,1.5)++(-0.7,0) node{$\flA_L=$};
\draw (-0.7,-1.5)++(-0.7,0) node{$\flA_R=$};
\draw[->](L3) --node[midway,left,scale=0.9]{$r_1^\ast$} (L2);
\draw[->](L2) --node[midway,left,scale=0.9]{$r_2^\ast$} (L1);
\draw[->](L1) --node[midway,left,scale=0.9]{$r_1^\ast$} (L0);
\node (R3) at (3.7,-1.5) {$\flA^0$};
\node (R2) at (3.7,-0.5) {$\flA^1$};
\node (R1) at (3.7,0.5) {$\flA^2$};
\node (R0) at (3.7,1.5) {$\flA^3$};
\draw (3.7,1.5)++(0.7,0) node{$=\flA^L$};
\draw (3.7,-1.5)++(0.7,0) node{$=\flA^R$};
\draw[<-](R3) --node[midway,right,scale=0.9]{$r_1$} (R2);
\draw[<-](R2) --node[midway,right,scale=0.9]{$r_2$} (R1);
\draw[<-](R1) --node[midway,right,scale=0.9]{$r_1$} (R0);
\end{tikzpicture}
\end{align*}
Then the matrix entries of the simple Wilson line $g_{[c]}$ in $V(\varpi_1)$ are computed as follows:
\begin{align*}
    \Delta_{11}(g_{[c]}) &= \Delta_{\varpi_1,\varpi_1}(g_{[c]}) = \frac{\Delta_1(\flA^3,\flA_3)}{h(\flA_R,\flA_L)^{\varpi_1^\ast}h(\flA^L,\flA^R)^{\varpi_1}}=\frac{\Delta_1(\flA^3,\flA_3)}{\clA_\inn^1 \clA_\out^1}, \\
    \Delta_{12}(g_{[c]}) &= \Delta_{\varpi_1,r_1\varpi_1}(g_{[c]}) = \frac{\Delta_1(\flA^2,\flA_3)}{h(\flA_R,\flA_L)^{\varpi_1^\ast}h(\flA^L,\flA^R)^{\varpi_2}}=\frac{\Delta_1(\flA^2,\flA_3)}{\clA_\inn^1 \clA_\out^2},\\
    \Delta_{13}(g_{[c]}) &= \Delta_{\varpi_1,w_0\varpi_1}(g_{[c]}) = \frac{\Delta_1(\flA^0,\flA_3)}{h(\flA_R,\flA_L)^{\varpi_1^\ast}}=\frac{\Delta_1(\flA^0,\flA_3)}{\clA_\inn^1}, \\
    \Delta_{21}(g_{[c]}) &= \Delta_{r_1\varpi_1,\varpi_1}(g_{[c]}) = \frac{\Delta_1(\flA^3,\flA_2)}{h(\flA_R,\flA_L)^{\varpi_2^\ast}h(\flA^L,\flA^R)^{\varpi_1}}=\frac{\Delta_1(\flA^3,\flA_2)}{\clA_\inn^2 \clA_\out^1}, \\
    \Delta_{22}(g_{[c]}) &= \Delta_{r_1\varpi_1,r_1\varpi_1}(g_{[c]}) = \frac{\Delta_1(\flA^2,\flA_2)}{h(\flA_R,\flA_L)^{\varpi_2^\ast}h(\flA^L,\flA^R)^{\varpi_2}}=\frac{\Delta_1(\flA^2,\flA_2)}{\clA_\inn^2 \clA_\out^2},\\
    \Delta_{23}(g_{[c]}) &= \Delta_{r_1\varpi_1,w_0\varpi_1}(g_{[c]}) = \frac{\Delta_1(\flA^0,\flA_2)}{h(\flA_R,\flA_L)^{\varpi_2^\ast}}=\frac{\Delta_1(\flA^0,\flA_2)}{\clA_\inn^2}, \\
    \Delta_{31}(g_{[c]}) &= \Delta_{w_0\varpi_1,\varpi_1}(g_{[c]}) = \frac{\Delta_1(\flA^3,\flA_0)}{h(\flA^L,\flA^R)^{\varpi_1}}=\frac{\Delta_1(\flA^3,\flA_0)}{\clA_\out^1}, \\
    \Delta_{32}(g_{[c]}) &= \Delta_{w_0\varpi_1,r_1\varpi_1}(g_{[c]}) = \frac{\Delta_1(\flA^2,\flA_0)}{h(\flA^L,\flA^R)^{\varpi_2}}=\frac{\Delta_1(\flA^2,\flA_0)}{\clA_\out^2},\\
    \Delta_{33}(g_{[c]}) &= \Delta_{w_0\varpi_1,w_0\varpi_1}(g_{[c]}) = \Delta_1(\flA^0,\flA_0)=\Delta_1(\flA^0,\flA_0).
\end{align*}
Thus the Wilson line matrix $g_{[c]} \in SL_3(\cO(\A_{SL_3,\Sigma}^\times))$ is collectively given by
\begin{align}\label{eq:A2_matrix_cluster}
    g_{[c]} = \begin{pmatrix}
    \displaystyle\frac{\Delta_1(\flA^3,\flA_3)}{\clA_\inn^1 \clA_\out^1} & 
    \displaystyle\frac{\Delta_1(\flA^2,\flA_3)}{\clA_\inn^1 \clA_\out^2} & 
    \displaystyle\frac{\Delta_1(\flA^0,\flA_3)}{\clA_\inn^1}\vspace{3mm} \\
    \displaystyle\frac{\Delta_1(\flA^3,\flA_2)}{\clA_\inn^2 \clA_\out^1} & 
    \displaystyle\frac{\Delta_1(\flA^2,\flA_2)}{\clA_\inn^2 \clA_\out^2} & 
    \displaystyle\frac{\Delta_1(\flA^0,\flA_2)}{\clA_\inn^2} \vspace{3mm}\\
    \displaystyle\frac{\Delta_1(\flA^3,\flA_0)}{\clA_\out^1} & \displaystyle\frac{\Delta_1(\flA^2,\flA_0)}{\clA_\out^2} & 
    \Delta_1(\flA^0,\flA_0)
    \end{pmatrix}.
\end{align}
The clusters to which the cluster variables appearing above belong are shown in \cref{fig:A2_clusters}, as well as the mutation sequences relating them. Here we use the fact that the two words $(\cdots s \overline{t}\cdots)$ and $(\cdots \overline{t} s \cdots)$ give rise to the same cluster for $s \neq t$. 
The corresponding transformations of dashed diagonals connecting the decorated flags are shown in \cref{fig:A2_diagonals}.

\begin{figure}[ht]
\begin{tikzpicture}[scale=1.05]
\begin{scope}[>=latex]
\draw[blue,very thin] (0,2) -- (4,2) -- (4,-2) -- (0,-2) --cycle;
\draw[blue,very thin,rounded corners=5pt] (4,2) -- (8/3,0) -- (4/3,0) -- (0,-2);
\foreach \i in {0,4} \foreach \j in {-2,2} \fill(\i,\j) circle(1.5pt);
\draw[thick,|->] (5,0) -- (6,0);
{\color{mygreen}
\foreach \i in {0,1,2,3,4}
\draw (8/3,\i-2) circle(2pt) coordinate(x1\i);
\foreach \i in {0,1,2}
\draw (4/3,2*\i-2) circle(2pt) coordinate(x2\i);
\qarrow{x11}{x10}
\qarrow{x12}{x11}
\qarrow{x12}{x13}
\qarrow{x13}{x14}
\qarrow{x21}{x20}
\qarrow{x21}{x22}
\qarrow{x20}{x11}
\qarrow{x11}{x21}
\qarrow{x22}{x13}
\qarrow{x13}{x21}
\qarrow{x21}{x12}
\qdlarrow{x10}{x20}
\qdrarrow{x14}{x22}
}
\draw (x10)++(0,-0.3) node[anchor=west,scale=0.85]{$\Delta_1(\flA^0,\flA_3)$};
\draw (x20)++(0,-0.3) node[anchor=east,scale=0.85]{$\Delta_2(\flA^1,\flA_3)\quad$};
\draw (x20)++(0,-0.7) node[anchor=east,scale=0.85]{$=\Delta_2(\flA^0,\flA_3)$};
\draw (x11) node[anchor=west,scale=0.85]{$\ \Delta_1(\flA^2,\flA_3)$};
\draw (x11)++(0,-0.4) node[anchor=west,scale=0.85]{$\ =\Delta_1(\flA^1,\flA_3)$};
\draw[myorange] (x12) node[anchor=west,scale=0.85]{$\ \Delta_1(\flA^3,\flA_3)$};
\draw (x13) node[anchor=west,scale=0.85]{$\ \Delta_1(\flA^3,\flA_1)$};
\draw (x13)++(0,-0.4) node[anchor=west,scale=0.85]{$\ =\Delta_1(\flA^3,\flA_2)$};
\draw (x14)++(0,0.3) node[anchor=west,scale=0.85]{$\ \Delta_1(\flA^3,\flA_0)$};
\draw (x21)++(0,0.4) node[anchor=east,scale=0.85]{$\Delta_2(\flA^2,\flA_3)\quad$};
\draw (x21)++(0,0) node[anchor=east,scale=0.85]{$=\Delta_2(\flA^3,\flA_3)$};
\draw (x21)++(0,-0.4) node[anchor=east,scale=0.85]{$=\Delta_2(\flA^3,\flA_2)$};
\draw (x22)++(0,0.7) node[anchor=east,scale=0.85]{$\Delta_2(\flA^3,\flA_0)\quad$};
\draw (x22)++(0,0.3) node[anchor=east,scale=0.85]{$=\Delta_2(\flA^3,\flA_1)$};
\draw[myorange](x12) circle(5pt);
\draw (2,-3.5) node[draw,rectangle,rounded corners=0.3em]{$\overline{1},\overline{2},{\color{myorange}\underline{{\color{black}\overline{1},1}}},2,1$};
\end{scope}

\begin{scope}[xshift=7cm,>=latex]
\draw[blue,very thin] (0,2) -- (4,2) -- (4,-2) -- (0,-2) --cycle;
\foreach \i in {0,4} \foreach \j in {-2,2} \fill(\i,\j) circle(1.5pt);
{\color{mygreen}
\foreach \i in {0,1,2,3,4}
\draw (8/3,\i-2) circle(2pt) coordinate(x1\i);
\foreach \i in {0,1,2}
\draw (4/3,2*\i-2) circle(2pt) coordinate(x2\i);
\qarrow{x11}{x10}
\qarrow{x11}{x12}
\qarrow{x13}{x12}
\qarrow{x13}{x14}
\qarrow{x21}{x20}
\qarrow{x21}{x22}
\qarrow{x20}{x11}
\qarrow{x22}{x13}
\qarrow{x12}{x21}
\qdlarrow{x10}{x20}
\qdrarrow{x14}{x22}
}
\draw (x10)++(0,-0.3) node[anchor=west,scale=0.85]{$\Delta_1(\flA^0,\flA_3)$};
\draw (x20)++(0,-0.3) node[anchor=east,scale=0.85]{$\Delta_2(\flA^1,\flA_3)\quad$};
\draw (x20)++(0,-0.7) node[anchor=east,scale=0.85]{$=\Delta_2(\flA^0,\flA_3)$};
\draw (x11) node[anchor=west,scale=0.85]{$\ \Delta_1(\flA^2,\flA_3)$};
\draw[myorange] (x11)++(0,-0.4) node[anchor=west,scale=0.85]{$\ =\Delta_1(\flA^1,\flA_3)$};
\draw (x12) node[anchor=west,scale=0.85]{$\ \Delta_1(\flA^2,\flA_2)$};
\draw[myorange] (x13) node[anchor=west,scale=0.85]{$\ \Delta_1(\flA^3,\flA_1)$};
\draw (x13)++(0,-0.4) node[anchor=west,scale=0.85]{$\ =\Delta_1(\flA^3,\flA_2)$};
\draw (x14)++(0,0.3) node[anchor=west,scale=0.85]{$\ \Delta_1(\flA^3,\flA_0)$};
\draw[myorange] (x21)++(0,0) node[anchor=east,scale=0.85]{$\Delta_2(\flA^2,\flA_2)\quad$};
\draw (x21)++(0,-0.4) node[anchor=east,scale=0.85]{$=\Delta_2(\flA^2,\flA_3)$};
\draw (x22)++(0,0.7) node[anchor=east,scale=0.85]{$\Delta_2(\flA^3,\flA_0)\quad$};
\draw (x22)++(0,0.3) node[anchor=east,scale=0.85]{$=\Delta_2(\flA^3,\flA_1)$};
\draw[myorange](x11) circle(5pt);
\draw[myorange](x21) circle(5pt);
\draw[myorange](x13) circle(5pt);
\draw (2,-3.5) node[draw,rectangle,rounded corners=0.3em]{$\overline{1},\overline{2},1,\overline{1},2,1\ \sim\ {\color{myorange}\underline{{\color{black}\overline{1},1}}},{\color{myorange}\underline{{\color{black}\overline{2},2}}},{\color{myorange}\underline{{\color{black}\overline{1},1}}}$};
\end{scope}

\begin{scope}[xshift=2cm,yshift=-7.5cm,>=latex]
\draw[thick,|->] (-2,0) -- (-1,0);
\draw[blue,very thin] (0,2) -- (4,2) -- (4,-2) -- (0,-2) --cycle;
\foreach \i in {0,4} \foreach \j in {-2,2} \fill(\i,\j) circle(1.5pt);
{\color{mygreen}
\foreach \i in {0,1,2,3,4}
\draw (8/3,\i-2) circle(2pt) coordinate(x1\i);
\foreach \i in {0,1,2}
\draw (4/3,2*\i-2) circle(2pt) coordinate(x2\i);
\qarrow{x10}{x11}
\qarrow{x12}{x11}
\qarrow{x12}{x13}
\qarrow{x14}{x13}
\qarrow{x20}{x21}
\qarrow{x22}{x21}
\qarrow{x11}{x20}
\qarrow{x13}{x22}
\qarrow{x21}{x12}
\qdrarrow{x20}{x10}
\qdlarrow{x22}{x14}
}
\draw (x10)++(0,-0.3) node[anchor=west,scale=0.85]{$\Delta_1(\flA^0,\flA_3)$};
\draw (x20)++(0,-0.3) node[anchor=east,scale=0.85]{$\Delta_2(\flA^0,\flA_2)\quad$};
\draw (x20)++(0,-0.7) node[anchor=east,scale=0.85]{$=\Delta_2(\flA^0,\flA_3)$};
\draw (x11) node[anchor=west,scale=0.85]{$\ \Delta_1(\flA^0,\flA_1)$};
\draw (x11)++(0,-0.4) node[anchor=west,scale=0.85]{$\ =\Delta_1(\flA^0,\flA_2)$};
\draw (x12) node[anchor=west,scale=0.85]{$\ \Delta_1(\flA^2,\flA_2)$};
\draw[myorange] (x12)++(0,-0.4) node[anchor=west,scale=0.85]{$\ =\Delta_1(\flA^1,\flA_1)$};
\draw (x13) node[anchor=west,scale=0.85]{$\ \Delta_1(\flA^2,\flA_0)$};
\draw (x13)++(0,-0.4) node[anchor=west,scale=0.85]{$\ =\Delta_1(\flA^1,\flA_0)$};
\draw (x14)++(0,0.3) node[anchor=west,scale=0.85]{$\ \Delta_1(\flA^3,\flA_0)$};
\draw (x21)++(0,0) node[anchor=east,scale=0.85]{$\Delta_2(\flA^1,\flA_1)\quad$};
\draw (x21)++(0,-0.4) node[anchor=east,scale=0.85]{$=\Delta_2(\flA^0,\flA_0)$};
\draw (x22)++(0,0.7) node[anchor=east,scale=0.85]{$\Delta_2(\flA^3,\flA_0)\quad$};
\draw (x22)++(0,0.3) node[anchor=east,scale=0.85]{$=\Delta_2(\flA^2,\flA_0)$};
\draw[myorange] (x12) circle(5pt);
\draw (2,-3.5) node[draw,rectangle,rounded corners=0.3em]{$1,\overline{1},2,\overline{2},1,\overline{1}\ \sim\  1,2,{\color{myorange}\underline{{\color{black}\overline{1},1}}},\overline{2},\overline{1}$};
\end{scope}

\begin{scope}[xshift=9cm,yshift=-7.5cm,>=latex]
\draw[thick,|->] (-2,0) -- (-1,0);
\draw[blue,very thin] (0,2) -- (4,2) -- (4,-2) -- (0,-2) --cycle;
\draw[blue,very thin,rounded corners=5pt] (4,-2) -- (8/3,0) -- (4/3,0) -- (0,2);
\foreach \i in {0,4} \foreach \j in {-2,2} \fill(\i,\j) circle(1.5pt);
{\color{mygreen}
\foreach \i in {0,1,2,3,4}
\draw (8/3,\i-2) circle(2pt) coordinate(x1\i);
\foreach \i in {0,1,2}
\draw (4/3,2*\i-2) circle(2pt) coordinate(x2\i);
\qarrow{x10}{x11}
\qarrow{x11}{x12}
\qarrow{x13}{x12}
\qarrow{x14}{x13}
\qarrow{x20}{x21}
\qarrow{x22}{x21}
\qarrow{x11}{x20}
\qarrow{x13}{x22}
\qarrow{x12}{x21}
\qarrow{x21}{x11}
\qarrow{x21}{x13}
\qdrarrow{x20}{x10}
\qdlarrow{x22}{x14}
}
\draw (x10)++(0,-0.3) node[anchor=west,scale=0.85]{$\Delta_1(\flA^0,\flA_3)$};
\draw (x20)++(0,-0.3) node[anchor=east,scale=0.85]{$\Delta_2(\flA^0,\flA_2)\quad$};
\draw (x20)++(0,-0.7) node[anchor=east,scale=0.85]{$=\Delta_2(\flA^0,\flA_3)$};
\draw (x11) node[anchor=west,scale=0.85]{$\ \Delta_1(\flA^0,\flA_1)$};
\draw (x11)++(0,-0.4) node[anchor=west,scale=0.85]{$\ =\Delta_1(\flA^0,\flA_2)$};
\draw (x12) node[anchor=west,scale=0.85]{$\ \Delta_1(\flA^0,\flA_0)$};
\draw (x13) node[anchor=west,scale=0.85]{$\ \Delta_1(\flA^2,\flA_0)$};
\draw (x13)++(0,-0.4) node[anchor=west,scale=0.85]{$\ =\Delta_1(\flA^1,\flA_0)$};
\draw (x14)++(0,0.3) node[anchor=west,scale=0.85]{$\ \Delta_1(\flA^3,\flA_0)$};
\draw (x21)++(0,0.4) node[anchor=east,scale=0.85]{$\Delta_2(\flA^1,\flA_0)\quad$};
\draw (x21)++(0,0) node[anchor=east,scale=0.85]{$=\Delta_2(\flA^0,\flA_0)$};
\draw (x21)++(0,-0.4) node[anchor=east,scale=0.85]{$=\Delta_2(\flA^0,\flA_1)$};
\draw (x22)++(0,0.7) node[anchor=east,scale=0.85]{$\Delta_2(\flA^3,\flA_0)\quad$};
\draw (x22)++(0,0.3) node[anchor=east,scale=0.85]{$=\Delta_2(\flA^2,\flA_0)$};
\draw (2,-3.5) node[draw,rectangle,rounded corners=0.3em]{$1,2,1,\overline{1},\overline{2},\overline{1}$};
\end{scope}
\end{tikzpicture}
    \caption{Some clusters in $\cO(\Conf_\inn^\out \A_{SL_3})$ and the mutation sequences relating them. Here the frozen variables/vertices are omitted, and the mutated vertices are shown in orange.}
    \label{fig:A2_clusters}
\end{figure}
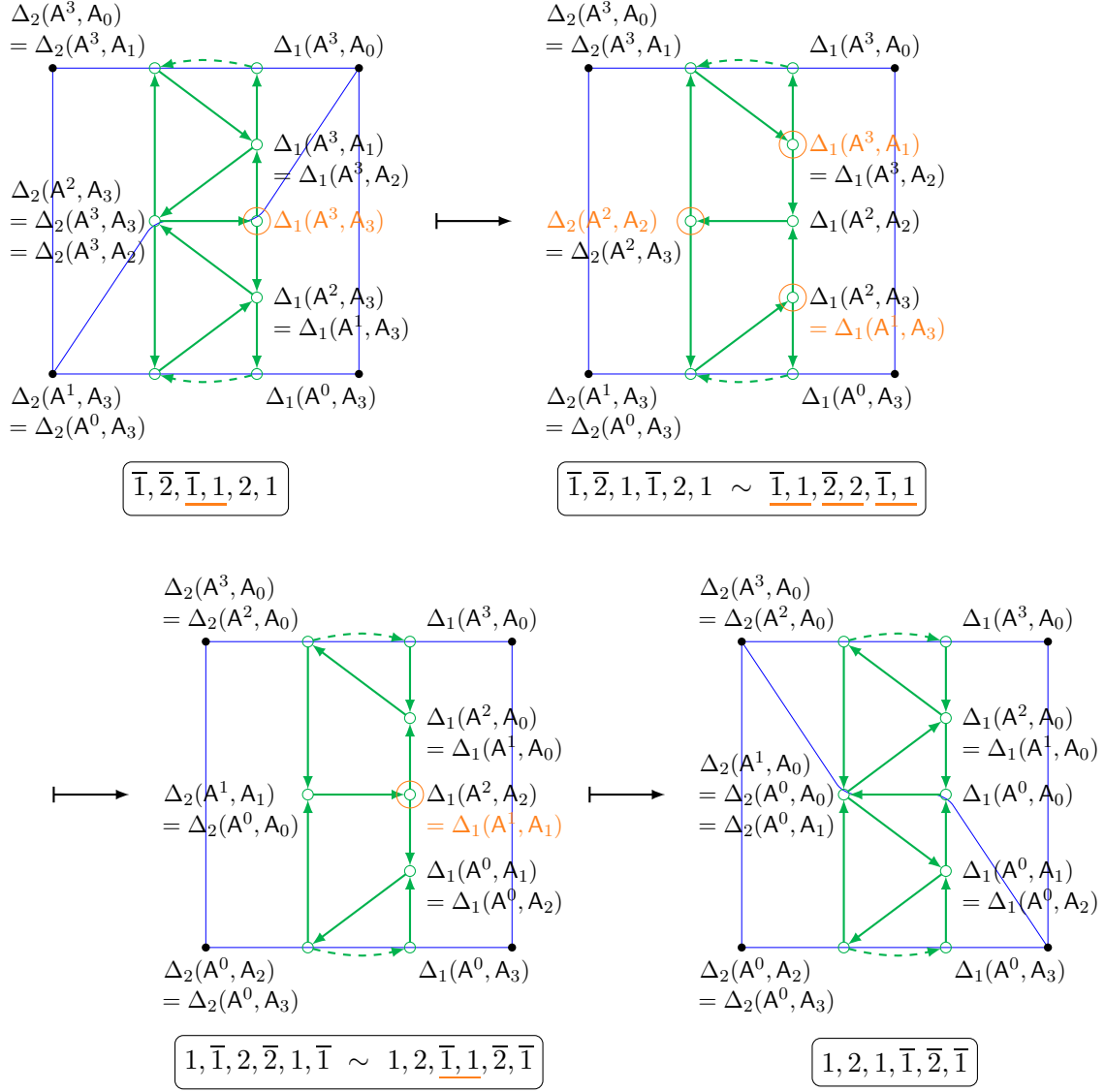

\begin{figure}[ht]
\begin{tikzpicture}[scale=0.8]
\draw[thick,|->] (5,0) -- (6,0);
\begin{scope}
\draw[thick] (0,2) -- (0,-2);
\draw[thick] (4,-2) -- (4,2);
\foreach \i in {0,4}
{
\foreach \j in {-2,-0.667,0.667,2}
\filldraw(\i,\j) circle(1.5pt);
}
\draw (-0,-2) coordinate(L3) node[left]{$\flA_3$};
\draw (-0,-0.667) coordinate(L2) node[left]{$\flA_2$};
\draw (-0,0.667) coordinate(L1) node[left]{$\flA_1$};
\draw (-0,2) coordinate(L0) node[left]{$\flA_0$};
\draw (4,-2) coordinate(R0) node[right]{$\flA^0$};
\draw (4,-0.667) coordinate(R1) node[right]{$\flA^1$};
\draw (4,0.667) coordinate(R2) node[right]{$\flA^2$};
\draw (4,2) coordinate(R3) node[right]{$\flA^3$};
\draw[dashed] (R3) -- (L3);
\foreach \i in {0,1,2}
{
\draw[dashed] (L3) -- (R\i);
\draw[dashed] (R3) -- (L\i);
}
\draw[orange] (L3) -- (R3);
\node[left,scale=0.9] at (0,-1.333) {$r_1^\ast$};
\node[left,scale=0.9] at (0,0) {$r_2^\ast$};
\node[left,scale=0.9] at (0,1.333) {$r_1^\ast$};
\node[right,scale=0.9] at (4,-1.333) {$r_1$};
\node[right,scale=0.9] at (4,0) {$r_2$};
\node[right,scale=0.9] at (4,1.333) {$r_1$};
\draw (2,-3) node[draw,rectangle,rounded corners=0.3em]{$\overline{1},\overline{2},{\color{myorange}\underline{{\color{black}\overline{1},1}}},2,1$};
\end{scope}
\begin{scope}[xshift=7cm]
\draw[thick] (0,2) -- (0,-2);
\draw[thick] (4,-2) -- (4,2);
\foreach \i in {0,4}
{
\foreach \j in {-2,-0.667,0.667,2}
\filldraw(\i,\j) circle(1.5pt);
}
\draw (-0,-2) coordinate(L3) node[left]{$\flA_3$};
\draw (-0,-0.667) coordinate(L2) node[left]{$\flA_2$};
\draw (-0,0.667) coordinate(L1) node[left]{$\flA_1$};
\draw (-0,2) coordinate(L0) node[left]{$\flA_0$};
\draw (4,-2) coordinate(R0) node[right]{$\flA^0$};
\draw (4,-0.667) coordinate(R1) node[right]{$\flA^1$};
\draw (4,0.667) coordinate(R2) node[right]{$\flA^2$};
\draw (4,2) coordinate(R3) node[right]{$\flA^3$};
\draw[dashed] (R2) -- (L2);
\foreach \i in {0,1,2}
{
\draw[dashed] (L3) -- (R\i);
\draw[dashed] (R3) -- (L\i);
}
\node[left,scale=0.9] at (0,-1.333) {$r_1^\ast$};
\node[left,scale=0.9] at (0,0) {$r_2^\ast$};
\node[left,scale=0.9] at (0,1.333) {$r_1^\ast$};
\node[right,scale=0.9] at (4,-1.333) {$r_1$};
\node[right,scale=0.9] at (4,0) {$r_2$};
\node[right,scale=0.9] at (4,1.333) {$r_1$};
\draw (2,-3) node[draw,rectangle,rounded corners=0.3em]{$\overline{1},\overline{2},1,\overline{1},2,1$};
\end{scope}

\begin{scope}[xshift=13cm]
\draw (-1,0) node[scale=1.3]{$=$};
\draw[thick] (0,2) -- (0,-2);
\draw[thick] (4,-2) -- (4,2);
\foreach \i in {0,4}
{
\foreach \j in {-2,-0.667,0.667,2}
\filldraw(\i,\j) circle(1.5pt);
}
\draw (-0,-2) coordinate(L3) node[left]{$\flA_3$};
\draw (-0,-0.667) coordinate(L2) node[left]{$\flA_2$};
\draw (-0,0.667) coordinate(L1) node[left]{$\flA_1$};
\draw (-0,2) coordinate(L0) node[left]{$\flA_0$};
\draw (4,-2) coordinate(R0) node[right]{$\flA^0$};
\draw (4,-0.667) coordinate(R1) node[right]{$\flA^1$};
\draw (4,0.667) coordinate(R2) node[right]{$\flA^2$};
\draw (4,2) coordinate(R3) node[right]{$\flA^3$};
\draw[dashed] (L3) -- (R0);
\draw[dashed] (L2) -- (R1);
\draw[dashed] (L1) -- (R2);
\draw[dashed] (L0) -- (R3);
\draw[dashed] (R2) -- (L2);
\draw[dashed] (R1) -- (L3);
\draw[dashed] (R3) -- (L1);
\draw[myorange] (R1) -- (L3);
\draw[myorange] (R2) -- (L2);
\draw[myorange] (R3) -- (L1);
\node[left,scale=0.9] at (0,-1.333) {$r_1^\ast$};
\node[left,scale=0.9] at (0,0) {$r_2^\ast$};
\node[left,scale=0.9] at (0,1.333) {$r_1^\ast$};
\node[right,scale=0.9] at (4,-1.333) {$r_1$};
\node[right,scale=0.9] at (4,0) {$r_2$};
\node[right,scale=0.9] at (4,1.333) {$r_1$};
\draw (2,-3) node[draw,rectangle,rounded corners=0.3em]{${\color{myorange}\underline{{\color{black}\overline{1},1}}},{\color{myorange}\underline{{\color{black}\overline{2},2}}},{\color{myorange}\underline{{\color{black}\overline{1},1}}}$};
\end{scope}

\draw[thick,|->](-0.5,-6.5) -- (0.5,-6.5);
\begin{scope}[xshift=1.5cm,yshift=-6.5cm,>=latex]
\draw[thick] (0,2) -- (0,-2);
\draw[thick] (4,-2) -- (4,2);
\foreach \i in {0,4}
{
\foreach \j in {-2,-0.667,0.667,2}
\filldraw(\i,\j) circle(1.5pt);
}
\draw (-0,-2) coordinate(L3) node[left]{$\flA_3$};
\draw (-0,-0.667) coordinate(L2) node[left]{$\flA_2$};
\draw (-0,0.667) coordinate(L1) node[left]{$\flA_1$};
\draw (-0,2) coordinate(L0) node[left]{$\flA_0$};
\draw (4,-2) coordinate(R0) node[right]{$\flA^0$};
\draw (4,-0.667) coordinate(R1) node[right]{$\flA^1$};
\draw (4,0.667) coordinate(R2) node[right]{$\flA^2$};
\draw (4,2) coordinate(R3) node[right]{$\flA^3$};
\draw[dashed] (L3) -- (R0);
\draw[dashed] (L2) -- (R1);
\draw[dashed] (L1) -- (R2);
\draw[dashed] (L0) -- (R3);
\draw[dashed] (R1) -- (L1);
\draw[dashed] (R0) -- (L2);
\draw[dashed] (R2) -- (L0);
\node[left,scale=0.9] at (0,-1.333) {$r_1^\ast$};
\node[left,scale=0.9] at (0,0) {$r_2^\ast$};
\node[left,scale=0.9] at (0,1.333) {$r_1^\ast$};
\node[right,scale=0.9] at (4,-1.333) {$r_1$};
\node[right,scale=0.9] at (4,0) {$r_2$};
\node[right,scale=0.9] at (4,1.333) {$r_1$};
\draw (2,-3) node[draw,rectangle,rounded corners=0.3em]{$1,\overline{1},2,\overline{2},1,\overline{1}$};
\end{scope}

\begin{scope}[xshift=7.5cm,yshift=-6.5cm,>=latex]
\draw (-1,0) node[scale=1.3]{$=$};
\draw[thick] (0,2) -- (0,-2);
\draw[thick] (4,-2) -- (4,2);
\foreach \i in {0,4}
{
\foreach \j in {-2,-0.667,0.667,2}
\filldraw(\i,\j) circle(1.5pt);
}
\draw (-0,-2) coordinate(L3) node[left]{$\flA_3$};
\draw (-0,-0.667) coordinate(L2) node[left]{$\flA_2$};
\draw (-0,0.667) coordinate(L1) node[left]{$\flA_1$};
\draw (-0,2) coordinate(L0) node[left]{$\flA_0$};
\draw (4,-2) coordinate(R0) node[right]{$\flA^0$};
\draw (4,-0.667) coordinate(R1) node[right]{$\flA^1$};
\draw (4,0.667) coordinate(R2) node[right]{$\flA^2$};
\draw (4,2) coordinate(R3) node[right]{$\flA^3$};
\draw[dashed] (L3) -- (R0);
\draw[dashed] (L1) -- (R0);
\draw[dashed] (L0) -- (R1);
\draw[dashed] (L0) -- (R3);
\draw[dashed] (R1) -- (L1);
\draw[dashed] (R0) -- (L2);
\draw[dashed] (R2) -- (L0);
\draw[orange] (R1) -- (L1);
\node[left,scale=0.9] at (0,-1.333) {$r_1^\ast$};
\node[left,scale=0.9] at (0,0) {$r_2^\ast$};
\node[left,scale=0.9] at (0,1.333) {$r_1^\ast$};
\node[right,scale=0.9] at (4,-1.333) {$r_1$};
\node[right,scale=0.9] at (4,0) {$r_2$};
\node[right,scale=0.9] at (4,1.333) {$r_1$};
\draw(2,-3) node[draw,rectangle,rounded corners=0.3em]{$1,2,{\color{myorange}\underline{{\color{black}\overline{1},1}}},\overline{2},\overline{1}$};
\end{scope}
\begin{scope}[xshift=14.5cm,yshift=-6.5cm,>=latex]
\draw[thick,|->] (-2,0) -- (-1,0);
\draw[thick] (0,2) -- (0,-2);
\draw[thick] (4,-2) -- (4,2);
\foreach \i in {0,4}
{
\foreach \j in {-2,-0.667,0.667,2}
\filldraw(\i,\j) circle(1.5pt);
}
\draw (-0,-2) coordinate(L3) node[left]{$\flA_3$};
\draw (-0,-0.667) coordinate(L2) node[left]{$\flA_2$};
\draw (-0,0.667) coordinate(L1) node[left]{$\flA_1$};
\draw (-0,2) coordinate(L0) node[left]{$\flA_0$};
\draw (4,-2) coordinate(R0) node[right]{$\flA^0$};
\draw (4,-0.667) coordinate(R1) node[right]{$\flA^1$};
\draw (4,0.667) coordinate(R2) node[right]{$\flA^2$};
\draw (4,2) coordinate(R3) node[right]{$\flA^3$};
\draw[dashed] (L3) -- (R0);
\draw[dashed] (L1) -- (R0);
\draw[dashed] (L0) -- (R1);
\draw[dashed] (L0) -- (R3);
\draw[dashed] (R0) -- (L0);
\draw[dashed] (R0) -- (L2);
\draw[dashed] (R2) -- (L0);
\node[left,scale=0.9] at (0,-1.333) {$r_1^\ast$};
\node[left,scale=0.9] at (0,0) {$r_2^\ast$};
\node[left,scale=0.9] at (0,1.333) {$r_1^\ast$};
\node[right,scale=0.9] at (4,-1.333) {$r_1$};
\node[right,scale=0.9] at (4,0) {$r_2$};
\node[right,scale=0.9] at (4,1.333) {$r_1$};
\draw (2,-3) node[draw,rectangle,rounded corners=0.3em]{$1,2,1,\overline{1},\overline{2},\overline{1}$};
\end{scope}
\end{tikzpicture}
    \caption{The transformations of dashed lines connecting the decorated flags corresponding to the mutation sequences in \cref{fig:A2_clusters}. The flipped diagonals are shown in orange, which are corresponding to mutations.}
    \label{fig:A2_diagonals}
\end{figure}
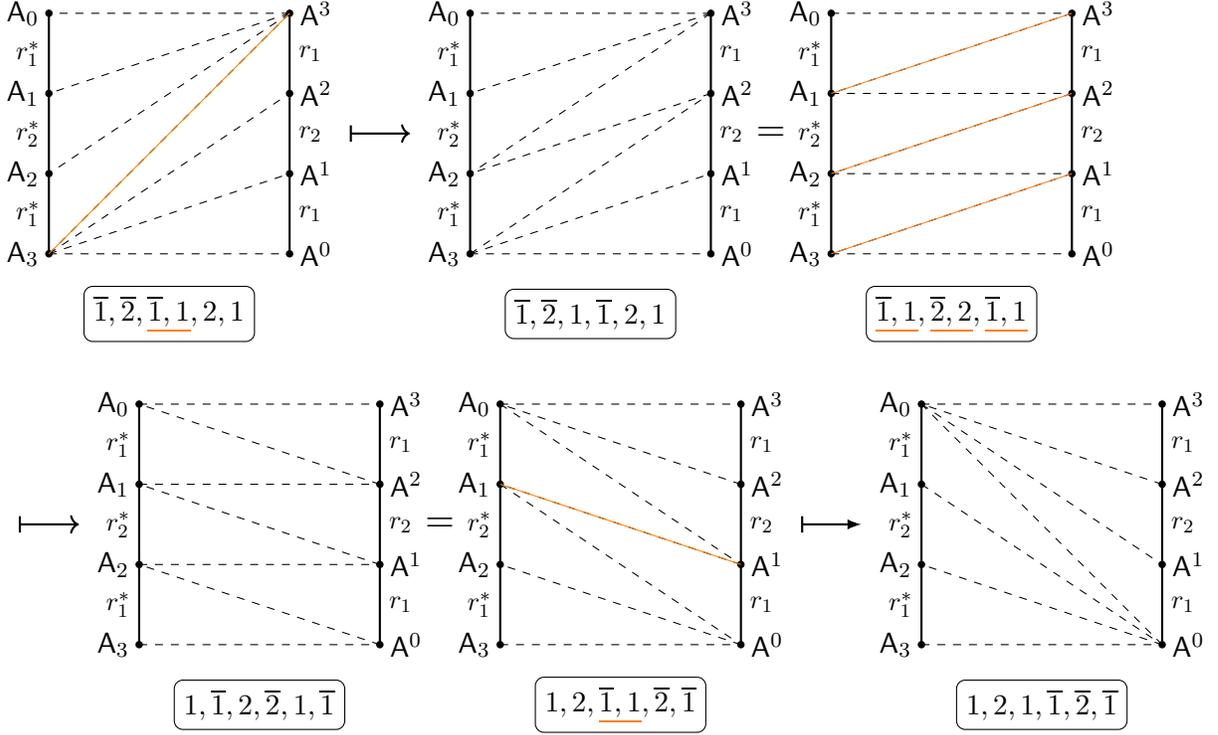

Recall the skein model studied in \cite{IY21}. For any marked surface as in \cref{subsec:marked_surface}, the first author and W.~Yuasa realized the quantum cluster algebra $\mathscr{A}^q_{\mathfrak{sl}_3,\Sigma}$ quantizing $\mathscr{A}_{\mathfrak{sl}_3,\Sigma}$ inside the skew-field of fractions of a certain $\mathfrak{sl}_3$-skein algebra $\mathscr{S}^q_{\mathfrak{sl}_3,\Sigma}$ consisting of $\mathfrak{sl}_3$-webs (\emph{i.e.}, oriented trivalent graphs whose vertices are either sinks
$
\tikz[baseline=-.6ex, scale=.1]{
    \draw[red,very thick,-<-] (0,0) -- (90:4);
    \draw[red,very thick, -<-] (0,0) -- (-30:4);
    \draw[red,very thick, -<-] (0,0) -- (210:4);
}
$
 or sources
 $
 \tikz[baseline=-.6ex, scale=.1]{
    \draw[red,very thick, ->-={.8}{red}] (0,0) -- (90:4);
    \draw[red,very thick, ->-={.8}{red}] (0,0) -- (-30:4);
    \draw[red,very thick, ->-={.8}{red}] (0,0) -- (210:4);
 }
 $), and shown the inclusion $\mathscr{S}^q_{\mathfrak{sl}_3,\Sigma}[\partial^{-1}] \subseteq \mathscr{A}^q_{\mathfrak{sl}_3,\Sigma}$. 
Here $\mathscr{S}^q_{\mathfrak{sl}_3,\Sigma}[\partial^{-1}]$ denote the localized skein algebra along the boundary $\mathfrak{sl}_3$-webs. It implies $\mathscr{S}^1_{\mathfrak{sl}_3,\Sigma}[\partial^{-1}] \subseteq \mathscr{A}_{\mathfrak{sl}_3,\Sigma}$ at the classical specialization $q=1 \in \C$. 

On the other hand, if the marked surface $\Sigma$ has at least two marked points, then we have $\mathscr{A}_{\mathfrak{sl}_3,\Sigma} = \mathscr{U}_{\mathfrak{sl}_3,\Sigma} = \cO(\A^\times_{SL_3,\Sigma})$
by \cref{main theorem}. Moreover, one can verify that each entry of the matrix \eqref{eq:A2_matrix_cluster} comes from the localized skein algebra $\mathscr{S}^1_{\mathfrak{sl}_3,\Sigma}[\partial^{-1}]$ by comparing the clusters in \cref{fig:A2_clusters} and the construction in \cite[Section 5]{IY21}. Explicitly, we have

\begin{align*}
    g_{[c]} = \begin{pmatrix}
    \tikz[scale=1.1]{
    \smallsq;
    \draw[myblue,thick,->-](0,0) to[bend right=20] (0,1); 
    \draw[myblue,thick,->-] (1,0) to[bend left=20] (1,1);
    \draw[red,thick,->-](0,0) -- (1,1);} & 
    \tikz[scale=1.1]{
    \smallsq;
    \draw[myblue,thick,->-](0,0) to[bend right=20] (0,1); 
	\draw[myblue,thick,-<-] (1,0) to[bend left=20] (1,1);
    \triv{0,0}{1,0}{1,1};} &
    \tikz[scale=1.1]{
    \smallsq;
    \draw[myblue,thick,->-](0,0) to[bend right=20] (0,1); 
    \draw[red,thick,->-](0,0) to[bend left=20] (1,0);}\vspace{2mm} \\
    \tikz[scale=1.1]{
    \smallsq;
	\draw[myblue,thick,-<-={0.6}{}](0,0) to[bend right=20] (0,1); 
    \draw[myblue,thick,->-] (1,0) to[bend left=20] (1,1);
    \trivop{0,0}{0,1}{1,1};} &
    \tikz[scale=1.1]{
    \smallsq;
    \draw[myblue,thick,-<-](0,0) to[bend right=20] (0,1); 
    \draw[myblue,thick,-<-] (1,0) to[bend left=20] (1,1);
	\draw[red,thick,-<-={0.7}{}] (0,0) -- (0.35,0.5);
	\draw[red,thick,-<-={0.7}{}] (0,1) -- (0.35,0.5);
	\draw[red,thick,->-={0.7}{}] (1,0) -- (0.65,0.5);
	\draw[red,thick,->-={0.7}{}] (1,1) -- (0.65,0.5);
	\draw[red,thick,-<-] (0.65,0.5) -- (0.35,0.5);} &
    \tikz[scale=1.1]{
    \smallsq;
    \draw[myblue,thick,-<-={0.4}{}](0,0) to[bend right=20] (0,1); 
    \trivop{0,0}{0,1}{1,0};}\vspace{2mm}\\
    \tikz[scale=1.1]{
    \smallsq;
    \draw[myblue,thick,->-] (1,0) to[bend left=20] (1,1);
    \draw[red,thick,->-](0,1) to[bend right=20] (1,1);} &
    \tikz[scale=1.1]{
    \smallsq;
	\draw[myblue,thick,-<-] (1,0) to[bend left=20] (1,1);
    \triv{1,0}{0,1}{1,1};} &
    \tikz[scale=1.1]{
    \smallsq;
    \draw[red,thick,->-](0,1) -- (1,0);}
    \end{pmatrix}.
\end{align*}
Here the inverse of a boundary $\mathfrak{sl}_3$-web is shown in blue. 
Then by the same line of argument as \cref{subsec:proof}, we see that the inclusion $\mathscr{U}_{\mathfrak{sl}_3,\Sigma} \subset \mathscr{S}^1_{\mathfrak{sl}_3,\Sigma}[\partial^{-1}] $ holds. Thus we get $\mathscr{S}^1_{\mathfrak{sl}_3,\Sigma}[\partial^{-1}] =\mathscr{A}_{\mathfrak{sl}_3,\Sigma} = \mathscr{U}_{\mathfrak{sl}_3,\Sigma}$, which confirms \cite[Conjecture 3]{IY21} at the classical level. 

\subsection{$Sp_4$-case}
The vector representation $V(\varpi_1)=\mathbb{C}^4$ of $Sp_4$ is minuscule. The weights in this representation are given by
\begin{align*}
    \varpi_1, \quad r_1\varpi_1=\varpi_2 -\varpi_1, \quad r_2r_1\varpi_1=\varpi_1-\varpi_2, \quad w_0\varpi_1=-\varpi_1.
\end{align*}
By convention, we choose the symplectic form to be 
\begin{align*}
    J:=\begin{pmatrix}
       &      &     &    1\\
       &      &   -1&     \\
       &     1&     & \\
     -1&      &     &     
    \end{pmatrix}
\end{align*}
which determines the vector representation $Sp_4 \cong Sp(\mathbb{C}^4,J)$. In this case, we need to use both of the reduced words $\bs^\bullet=(1,2,1,2)$ and $\hat{\bs}^\bullet=(2,1,2,1)$ of $w_0 \in W$ in order to obtain all the matrix coefficients in $V(\varpi_1)$. They give rise to the chains of decorated flags
\begin{align*}
    &\flA^L=\flA^4 \xrightarrow{r_2} \flA^3 \xrightarrow{r_1} \flA^2 \xrightarrow{r_2} \flA^1 \xrightarrow{r_1} \flA^0=\flA^R,\\
    &\flA^L=\hat{\flA}^4 \xrightarrow{r_1} \hat{\flA}^3 \xrightarrow{r_2} \hat{\flA}^2 \xrightarrow{r_1} \hat{\flA}^1 \xrightarrow{r_2} \hat{\flA}^0=\flA^R, 
\end{align*}
respectively. Similarly, the words $\bs_\bullet=(1,2,1,2)$ and $\hat{\bs}_\bullet=(2,1,2,1)$ give rise to the bottom chains $(\flA_l)$ and $(\hat{\flA_l})$, respectively. Similarly to the $SL_2$- and $SL_3$-cases, we can compute the Wilson line matrix $g_{[c]} \in Sp_4(\cO(\A_{Sp_4,\Sigma}^\times))$ and get
\begin{align}\label{eq:C2_matrix_cluster}
    g_{[c]} = \begin{pmatrix}
    \displaystyle\frac{\Delta_1(\flA^4,\flA_4)}{\clA_\inn^1 \clA_\out^1} & 
    \displaystyle\frac{\Delta_1(\hat{\flA}^3,\flA_4)}{\clA_\inn^1 \clA_\out^2} & 
    \displaystyle\frac{\Delta_1(\flA^2,\flA_4)}{\clA_\inn^1\clA_\out^1} &
    \displaystyle\frac{\Delta_1(\flA^0,\flA_4)}{\clA_\inn^1}\vspace{3mm} \\
    \displaystyle\frac{\Delta_1(\flA^4,\hat{\flA}_3)}{\clA_\inn^2 \clA_\out^1} & 
    \displaystyle\frac{\Delta_1(\hat{\flA}^3,\hat{\flA}_3)}{\clA_\inn^2 \clA_\out^2} & 
    \displaystyle\frac{\Delta_1(\flA^2,\hat{\flA}_3)}{\clA_\inn^2\clA_\out^1} &
    \displaystyle\frac{\Delta_1(\flA^0,\hat{\flA}_3)}{\clA_\inn^2}\vspace{3mm} \\
    \displaystyle\frac{\Delta_1(\flA^4,\flA_2)}{\clA_\inn^1 \clA_\out^1} & 
    \displaystyle\frac{\Delta_1(\hat{\flA}^3,\flA_2)}{\clA_\inn^1 \clA_\out^2} & 
    \displaystyle\frac{\Delta_1(\flA^2,\flA_2)}{\clA_\inn^1\clA_\out^1} &
    \displaystyle\frac{\Delta_1(\flA^0,\flA_2)}{\clA_\inn^1}\vspace{3mm}\\
    \displaystyle\frac{\Delta_1(\flA^4,\flA_0)}{ \clA_\out^1} & 
    \displaystyle\frac{\Delta_1(\hat{\flA}^3,\flA_0)}{\clA_\out^2} & 
    \displaystyle\frac{\Delta_1(\flA^2,\flA_0)}{\clA_\out^1} &
    \displaystyle\Delta_1(\flA^0,\flA_0)
    \end{pmatrix}.
\end{align}
The investigation of the clusters to which the cluster variables appearing in this expression belong is left to the reader. Apart from the $A_n$-cases, the choice of distinguished vertices $v_T$ of a triangulation is crucial: we remark here that the cluster for the double reduced word $\bs=(\overline{1},\overline{2},\overline{1},\overline{2},2,1,2,1)$ corresponds to the decorated triangulation shown in the left of \cref{fig:C2_initial_cluster}, whose underlying weighted quiver is shown in the right (see \cite[Appendix C]{IO20} for our convention on weighted quivers). 

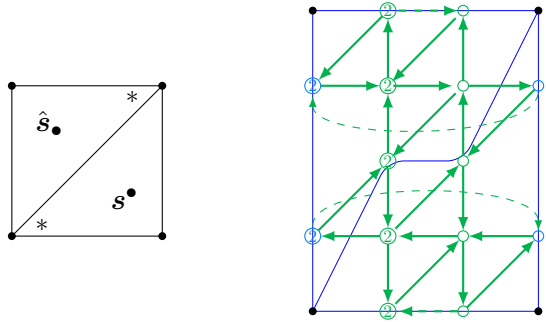
\begin{figure}[ht]
\centering
\begin{tikzpicture}
\draw (0,1) -- (2,1) -- (2,3) -- (0,3) --cycle;
\draw (0,1) -- (2,3);
\foreach \i in {0,2} \foreach \j in {1,3} \fill(\i,\j) circle(1.5pt);
\node[scale=0.9] at (.4,1.15) {$\ast$};
\node[scale=0.9] at (2-.4,3-.15) {$\ast$};
\node at (1.5,1.5){$\bs^\bullet$};
\node at (.5,2.5){$\hat{\bs}_\bullet$};
{\begin{scope}[xshift=7cm,rotate=90]
\draw[blue,very thin] (0,0) -- (4,0) -- (4,3) -- (0,3) --cycle;
\draw[blue,very thin,rounded corners=6pt] (4,0) -- (2,1) -- (2,2) -- (0,3);
\foreach \i in {0,4} \foreach \j in {0,3} \fill(\i,\j) circle(1.5pt);
\quiversquareC{0,0}{4,0}{4,3}{0,3};
\begin{scope}[>=latex]
{\color{mygreen}
\qsarrow{v21}{v20};
\qsarrow{v22}{v21};
\qsarrow{v22}{v23};
\qsarrow{v23}{v24};
\qarrow{v11}{v10};
\qarrow{v12}{v11};
\qarrow{v12}{v13};
\qarrow{v13}{v14};
\qstarrow{v20}{v11};
\qsharrow{v11}{v21};
\qstarrow{v21}{v12};
\qstarrow{v14}{v23};
\qsharrow{v23}{v13};
\qstarrow{v13}{v22};
\qshdarrow{v10}{v20};
\qstdarrow{v24}{v14};
\qarrow{v10}{yl};
\qarrow{yl}{v11};
\qarrow{v13}{yr};
\qarrow{yr}{v12};
\qsarrow{v21}{zl};
\qsarrow{zl}{v22};
\qsarrow{v24}{zr};
\qsarrow{zr}{v23};
\draw[dashed,->,shorten >=2pt,shorten <=4pt] (zl) ..controls ++(0.75,0) and ($(yl)+(0.75,0)$).. (yl);
\draw[dashed,<-,shorten >=2pt,shorten <=4pt] (zr) ..controls ++(-0.75,0) and ($(yr)+(-0.75,0)$).. (yr);
}
\end{scope}
\end{scope}}
\end{tikzpicture}
    \caption{The decorated triangulation corresponding to the double reduced word $\bs=(\overline{1},\overline{2},\overline{1},\overline{2},2,1,2,1)$ (left) and the underlying weighted quiver (right).}
    \label{fig:C2_initial_cluster}
\end{figure}

The skein model for the case is studied in \cite{IY_C2}. For any marked surface, the quantum cluster algebra $\mathscr{A}^q_{\mathfrak{sp}_4,\Sigma}$ quantizing $\mathscr{A}_{\mathfrak{sp}_4,\Sigma}$ is realized inside the skew-field of fractions of a certain $\mathfrak{sp}_4$-skein algebra 
consisting of $\mathfrak{sp}_4$-webs. 
An $\mathfrak{sp}_4$-web is represented by a trivalent graph with two types of edges 
$
\tikz[baseline=-.6ex, scale=.1]{
    \draw[webline] (-3,0) -- (3,0);
}
$ and
$
\tikz[baseline=-.6ex, scale=.1]{
    \draw[wline] (-3,0) -- (3,0);
}
$ (corresponding to the fundamental representations $V(\varpi_1)$ and $V(\varpi_2)$ respectively), and with trivalent vertices of the form
$
\tikz[baseline=-.6ex, scale=.1]{
    \draw[wline] (0,0) -- (90:3);
    \draw[webline] (0,0) -- (-30:3);
    \draw[webline] (0,0) -- (210:3);
}
$ in the interior. 

Similarly to the $\mathfrak{sl}_3$-case, we have an inclusion $\mathscr{S}^q_{\mathfrak{sp}_4,\Sigma}[\partial^{-1}] \subseteq \mathscr{A}^q_{\mathfrak{sp}_4,\Sigma}$ after localizing along the boundary $\mathfrak{sp}_4$-webs. 
Here $\mathscr{S}^q_{\mathfrak{sp}_4,\Sigma}$ denotes the \emph{$\mathbb{Z}_q$-form} of the $\mathfrak{sp}_4$-skein algebra introduced in \cite{IY_C2} (namely, $\mathscr{S}^{\mathbb{Z}_q}_{\mathfrak{sp}_4,\Sigma}$ in the notation there). 
It implies $\mathscr{S}^1_{\mathfrak{sp}_4,\Sigma}[\partial^{-1}] \subseteq \mathscr{A}_{\mathfrak{sp}_4,\Sigma}$ at the classical specialization $q=1 \in \C$. 

Again, each entry of the matrix \eqref{eq:A2_matrix_cluster} comes from the localized skein algebra $\mathscr{S}^1_{\mathfrak{sp}_4,\Sigma}[\partial^{-1}]$. Explicitly, we have
\begin{align*}
    g_{[c]} = \begin{pmatrix}
    \tikz[scale=1.1]{
    \draw[myblue,very thick](0,0) to[bend right=20] (0,1); 
    \draw[myblue,very thick] (1,0) to[bend left=20] (1,1);
    \draw[red,very thick](0,0) -- (1,1);
    \smallsq;} & 
    \tikz[scale=1.1]{
    \draw[myblue,very thick](0,0) to[bend right=20] (0,1); 
	\draw[myblue,wlinebdy] (1,0) to[bend left=20] (1,1);
	\smallsq;
    \trivL{0,0}{1,0}{1,1};} &
				\tikz[scale=1.1]{
    \draw[myblue,very thick](0,0) to[bend right=20] (0,1); 
				\draw[myblue,very thick] (1,0) to[bend left=20] (1,1);
    \trivR{0,0}{1,0}{1,1};
    \smallsq;} &
    \tikz[scale=1.1]{
    \draw[myblue,very thick](0,0) to[bend right=20] (0,1); 
    \draw[red,very thick](0,0) to[bend left=20] (1,0);
    \smallsq;}\vspace{2mm} \\
    \tikz[scale=1.1]{
	\draw[myblue,wlinebdy](0,0) to[bend right=20] (0,1); 
    \draw[myblue,very thick] (1,0) to[bend left=20] (1,1);
    \trivL{1,1}{0,1}{0,0};
    \smallsq;} &
    \tikz[scale=1.1]{
    \draw[myblue,wlinebdy](0,0) to[bend right=20] (0,1); 
    \draw[myblue,wlinebdy] (1,0) to[bend left=20] (1,1);
	\draw[red,wline] (0,0) -- (0.35,0.5);
	\draw[red,webline] (0,1) -- (0.35,0.5);
	\draw[red,webline] (1,0) -- (0.65,0.5);
	\draw[red,wline] (1,1) -- (0.65,0.5);
	\draw[red,webline] (0.65,0.5) -- (0.35,0.5);
    \smallsq;} &
    \tikz[scale=1.1]{
    \draw[myblue,wlinebdy](0,0) to[bend right=20] (0,1); 
    \draw[myblue,very thick](1,0) to[bend left=20] (1,1);
    \draw[red,wline] (0,0) -- (0.35,0.5);
	\draw[red,webline] (0,1) -- (0.35,0.5);
	\draw[red,wline] (1,0) -- (0.65,0.5);
	\draw[red,webline] (1,1) -- (0.65,0.5);
	\draw[red,webline] (0.65,0.5) -- (0.35,0.5);
    \smallsq;} &
	\tikz[scale=1.1]{
    \draw[myblue,wlinebdy](0,0) to[bend right=20] (0,1); 
    \trivL{1,0}{0,1}{0,0};
    \smallsq;}\vspace{2mm}\\
    \tikz[scale=1.1]{
    \draw[myblue,very thick] (1,0) to[bend left=20] (1,1);
    \draw[myblue,very thick](0,0) to[bend right=20] (0,1);
	\trivR{1,1}{0,1}{0,0};
    \smallsq;} &
    \tikz[scale=1.1]{
    \draw[myblue,very thick](0,0) to[bend right=20] (0,1);
	\draw[myblue,wlinebdy] (1,0) to[bend left=20] (1,1);
    \draw[red,webline] (0,0) -- (0.35,0.5);
	\draw[red,wline] (0,1) -- (0.35,0.5);
	\draw[red,webline] (1,0) -- (0.65,0.5);
	\draw[red,wline] (1,1) -- (0.65,0.5);
	\draw[red,webline] (0.65,0.5) -- (0.35,0.5);
    \smallsq;} &
    \tikz[scale=1.1]{
    \draw[myblue,very thick](0,0) to[bend right=20] (0,1);
	\draw[myblue,very thick] (1,0) to[bend left=20] (1,1);
    \draw[red,webline] (0,0) -- (0.35,0.5);
	\draw[red,wline] (0,1) -- (0.35,0.5);
	\draw[red,wline] (1,0) -- (0.65,0.5);
	\draw[red,webline] (1,1) -- (0.65,0.5);
	\draw[red,webline] (0.65,0.5) -- (0.35,0.5);
    \smallsq;} &
    \tikz[scale=1.1]{
    \draw[myblue,very thick](0,0) to[bend right=20] (0,1); 
    \trivR{1,0}{0,1}{0,0};
    \smallsq;}\vspace{2mm}\\
    \tikz[scale=1.1]{
    \draw[myblue,very thick] (1,0) to[bend left=20] (1,1);
	\draw[red,very thick](0,1) to[bend right=20] (1,1);
    \smallsq;} &
    \tikz[scale=1.1]{
	\draw[myblue,wlinebdy] (1,0) to[bend left=20] (1,1);
    \trivL{0,1}{1,0}{1,1};
    \smallsq;} &
    \tikz[scale=1.1]{
	\draw[myblue,very thick] (1,0) to[bend left=20] (1,1);
    \trivR{0,1}{1,0}{1,1};
    \smallsq;} &
    \tikz[scale=1.1]{
    \draw[red,very thick] (0,1) -- (1,0);
    \smallsq;}
    \end{pmatrix}.
\end{align*}
Then by exactly the same line of argument as in the $\mathfrak{sl}_3$-case, we get $\mathscr{S}^1_{\mathfrak{sp}_4,\Sigma}[\partial^{-1}]=\mathscr{A}_{\mathfrak{sp}_4,\Sigma} = \mathscr{U}_{\mathfrak{sp}_4,\Sigma}$.
\section{Supplements to \cref{subsec:U=O}}

\subsection{Amalgamation of upper cluster algebras}\label{app:amalgamation}


\begin{lem}\label{lem:remove_isolated}
Let $\mathscr{U}$ be an upper cluster algebra having a cluster $\mathbf{i}=(\{A_i\}_{i \in I},\ve)$ with \emph{isolated variables}, namely a subset $J \subset I$ such that $\ve_{ij}=0$ for all $j \in J$ and $i \in I$. Then the quotient of $\mathscr{U}$ by the ideal generated by $A_j-1$ for $j \in J$ is the upper cluster algebra $\mathscr{U}'$ having the seed obtained from $\mathbf{i}$ by deleting the data for $j \in J$. 
\end{lem}

\begin{proof}
It is easy because $\mathscr{U}=\mathscr{U}' \otimes \C[A_j^{\pm 1} \mid j \in J]$ in this case. 
\end{proof}

Fix a decorated triangulation $\boldsymbol{\mathcal{T}}$ of $\Sigma$. Let $\Lambda\subset e(\mathcal{T})$ be a subset of diagonals  so that we obtain a collection of disks with marked points when cutting along them. 
Let $\mathcal{A}_{G,\Sigma}^\times[\Lambda]\subset \mathcal{A}_{G,\Sigma}^\times$ be the open subspaces such that for every edge in $\Lambda$, its associated pair of decorated flags is generic. 
Let ${\bf i}_{\boldsymbol{\mathcal{T}}}[\Lambda]$ be the seed obtained from $\boldsymbol{\mathcal{T}}$ by freezing all the mutable vertices that are placed on the diagonals in $\Lambda$. 
We are going to prove:
\begin{prop}\label{equality}
$\mathcal{O}(\mathcal{A}_{G,\Sigma}^\times[\Lambda])= \mathscr{U}({\bf i}_{\boldsymbol{\mathcal{T}}}[\Lambda])$.
\end{prop}
\cref{local.iso.q} is covered as the special case $\Lambda=e(\mathcal{T})\setminus \{E\}$.

\begin{proof}
Let $N$ be the number of disks obtained  when  cutting $\Sigma$  along the diagonals in $\Lambda$. We shall prove  \cref{equality} via induction on  $N \geq 0$. The case $N=0$ is trivial. For $N\geq 1$, let $D$ be an obtained disk that  contains a boundary edge of $\Sigma$. Let $\Sigma'\subset \Sigma$ be the marked surface obtained by cutting off $D$ from $\Sigma$.  The decorated triangulation $\boldsymbol{\mathcal{T}}$ of $\Sigma$ naturally induces that of $\Sigma'$, which is denoted by $\boldsymbol{\mathcal{T}}'$. Similarly, we define the spaces $\mathcal{A}_{G,\Sigma'}^\times[\Lambda]\subset \mathcal{A}_{G,\Sigma'}^\times$ and the seed ${\bf i}_{\mathcal{T}'}[\Lambda]$. By induction, we  have 
\[
\mathcal{O}(\mathcal{A}_{G,\Sigma'}^\times[\Lambda])= \mathscr{U}({\bf i}_{\boldsymbol{\mathcal{T}'}}[\Lambda]).
\]

We label the marked points of $D$ from $1$ to $n$ in clockwise order, with the edge $\{1,n\}$ being a boundary interval of $\Sigma$. For $i=2,\ldots, n$, we color the edge $\{i-1,i\}$ of $D$ red if it is glued with either an edge of $\Sigma'$ or another edge $\{j-1,j\}$ of $D$ with $j<i$. Let $J$ be the set of indices $i$ such that $\{i-1,i\}$ is red.  See \cref{fig:decoration} for an illustration of the coloring scheme.

\begin{figure}[ht]
    \centering
\begin{tikzpicture}[scale=1.5]
\draw(120:1) -- (180:1);
\draw (300:1) -- (0:1);
\draw[red] (0:1) -- (60:1) -- (120:1);
\draw[red] (180:1) -- (240:1) -- (300:1);
\fill(0:1) circle(1.5pt) node[right]{$\flA_1$};
\fill(60:1) circle(1.5pt) node[above]{$\flA_2$};
\fill(120:1) circle(1.5pt) node[above]{$\flA_3$};
\fill(180:1) circle(1.5pt) node[left]{$\flA_4$};
\fill(240:1) circle(1.5pt) node[below] {$\flA_5$};
\fill(300:1) circle(1.5pt) node[below]{$\flA_6$};
\draw[thick,|->] (2.5,0) -- (3.5,0);
\begin{scope}[xshift=6cm]
\draw(120:1) -- (180:1);
\draw (300:1) -- (0:1);
\draw[red] (0:1) -- (60:1) -- (120:1);
\draw[red] (180:1) -- (240:1) -- (300:1);
\fill(0:1) circle(1.5pt) node[right]{$\flA_1$};
\fill(60:1) circle(1.5pt) node[above]{$\flA_2.h_2$};
\fill(120:1) circle(1.5pt) node[above]{$\flA_3.h_3$};
\fill(180:1) circle(1.5pt) node[left]{$\flA_4$};
\fill(240:1) circle(1.5pt) node[below] {$\flA_5.h_5$};
\fill(300:1) circle(1.5pt) node[below]{$\flA_6.h_6$};
\end{scope}
\end{tikzpicture}
\caption{The change of decorations with $n=6$ and $J=\{2,3,5,6\}$.}
\label{fig:decoration}
\end{figure}
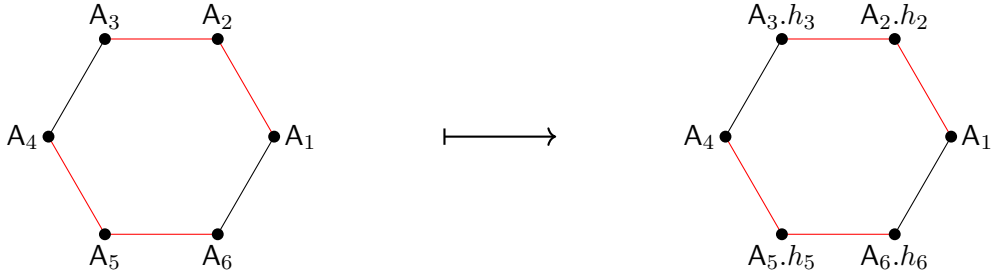

We define the cutting map
\begin{align*}
    \phi: ~ &\A_{G,\Sigma}^\times[\Lambda] \times H^{J} \longrightarrow
    \A_{G,\Sigma'}^\times[\Lambda] \times  \A_{G,D}^\times, \\
    &([\mathcal{L},\alpha], \{h_j\}_{j\in J}) \longmapsto ([\mathcal{L}',\alpha'], (\flA_1', \ldots, \flA_n')).
\end{align*}
Here $[\mathcal{L}',\alpha']$ is the restriction of  decorated twisted local system $[\mathcal{L},\alpha]$ from $\Sigma$ to $\Sigma'$. When restricting $[\mathcal{L},\alpha]$ to $D$, we obtain a $n$-tuple $(\flA_1, \ldots, \flA_n)$, and set 
\[
\flA_j':=\left\{
\begin{array}{ll}
   \flA_j.h_j  & \mbox{if~} j\in J;  \\
    \flA_j & \mbox{otherwise}.
\end{array}\right.
\]
Conversely, given the data $([\mathcal{L}',\alpha'], (\flA_1', \ldots, \flA_n'))$, one can recursively determine $h_j$ for $j\in J$ such that after rescaling each $\flA_j'$ by $h_j^{-1}$, the $h$-distance for the pair of decorated flags for every red edge coincides with the one that it is glued with. Gluing them back produces the preimage  of $([\mathcal{L}',\alpha'], (\flA_1', \ldots, \flA_n'))$ before $\phi$. For example, \cref{fig:splitting0} is the gluing map along one edge. Hence, the map $\phi$ is an isomorphism.

\begin{figure}[ht]
    \centering
\begin{tikzpicture}
\draw(0,2) -- (-1, 2) -- (-2,1) -- (-1,0) -- (0,0);
\draw[red] (0,0) -- (0,2);
\draw(1,2) -- (3,2);
\draw(1,0) -- (3,0);
\draw[red] (1,0) -- (1,2);
\fill(0,0) circle(1.5pt) node[below]{$\flA'_2$};
\fill(0,2) circle(1.5pt) node[above]{$\flA'_1$};
\fill(-2,1) circle(1.5pt) node[left]{$\flA'_4$};
\fill(-1,2) circle(1.5pt) node[above]{$\flA'_5$};
\fill(-1,0) circle(1.5pt) node[below]{$\flA'_3$};
\fill(1,0) circle(1.5pt) node[below]{$\flA_2$};
\fill(1,2) circle(1.5pt) node[above]{$\flA_1$};
\node at (-0.8,1) {$D$};
\node at (2,1) {$\Sigma'$};
\node[red] at (0.25,0.5) {$E'$};
\node[red] at (0.75,1.5) {$E$};
\draw[|->] (4,1) --node[midway,above]{$\phi^{-1}$} (5,1);
\begin{scope}[xshift=8.5cm]
\draw(0,2) -- (-1, 2) -- (-2,1) -- (-1,0) -- (0,0) -- (2,0);
\draw(0,2) -- (2,2);
\draw[red] (0,2) -- (0,0);
\fill(0,0) circle(1.5pt) node[below]{$\flA_2$};
\fill(0,2) circle(1.5pt) node[above]{$\flA_1$};
\fill(-2,1) circle(1.5pt) node[left]{$g.\flA'_4$};
\fill(-1,2) circle(1.5pt) node[above]{$g.\flA'_5$};
\fill(-1,0) circle(1.5pt) node[below]{$g.\flA'_3$};
\node at (-0.8,1) {$D$};
\node at (1,1) {$\Sigma'$};
\node[red] at (0.25,0.5) {$E$};
\node at (3,1) {$,\ h$};
\end{scope}
\end{tikzpicture}
    \caption{The gluing map $\phi^{-1}$. Here $h \in H$ is uniquely chosen so that $h(\flA'_1,\flA'_2.h^{-1}) = h(\flA_1,\flA_2)$. Then $g.(\flA'_1,\flA'_2.h^{-1}) = (\flA_1,\flA_2)$ for a unique $g \in G$.}
    \label{fig:splitting0}
\end{figure}
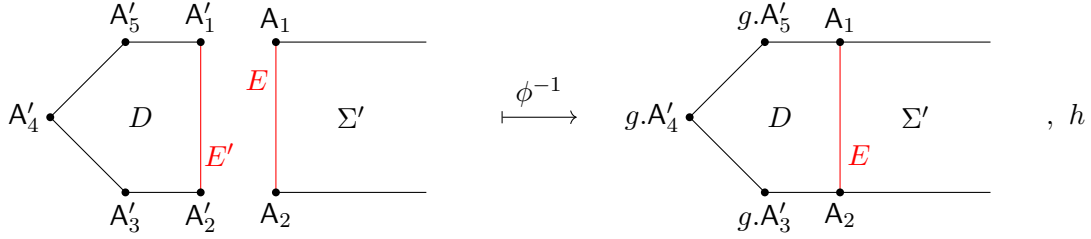

The decorated triangulation $\boldsymbol{\mathcal{T}}$ induces a decorated triangulation of $D$, which further gives rise to a cluster seed ${\bf i}_D$ of $\mathcal{A}_{G, D}^\times$. By Lemma \ref{disk.case}, we have $\mathcal{O}(\mathcal{A}_{G, D}^\times)= \mathscr{U}({\bf i}_D)$.
By \cite[Theorem 9.17]{GS19}, every cluster variable $A$ in ${\bf i}_D$ is homogeneous with respect to the $H^n$-actions. Denote by ${\lambda}(A):=(\lambda_1, \ldots, \lambda_n)$ its weight, so that
\[
A(\flA_1.h_1, \ldots, \flA_n.h_n)= A(\flA_1, \ldots, \flA_n)\cdot \prod_{i=1}^n h_i^{\lambda_i}.
\]
The coordinate ring $\cO(H^J)$ can be regarded as the upper cluster algebra $\mathscr{U}_m^{\mathrm{triv}}$ with isolated $m:=|S|\cdot |J|$ vertices and cluster variables $h_j^{\varpi_s}$ for $j\in J$ and $s \in S$. 

Let $A$ be a cluster variable in $\mathbf{i}_{\bm{\mathcal{T}'}}[\Lambda] \sqcup \mathbf{i}_{D}$. Under the induced isomorphism 
\begin{align}\label{eq:splitting_alg_hom}
    \phi^\ast:    \cO(\A_{G,\Sigma'}^\times[\Lambda]) \otimes  \cO(\A_{G,D}^\times) \xrightarrow{\sim}
    \cO(\A_{G,\Sigma}^\times[\Lambda]) \otimes \cO(H^{J}), 
\end{align}
we get
\begin{align*}
    &\phi^\ast (A) = \begin{cases}
    A & \mbox{if $A$ belong to $\mathbf{i}_{\bm{\mathcal{T}'}}[\Lambda]$}, \\
    A\cdot \prod_{j\in J}h_j^{\lambda(A)_j} & \mbox{if $A$ belongs to $\mathbf{i}_{D}$}.
    \end{cases} 
\end{align*}
 
We claim that $\phi^\ast$ is a quasi-isomorphism in the sense of Fraser \cite{Fraser}. Namely, $\phi^\ast$ rescales the cluster variables by Laurent monomials of frozen variables in such a way that the unfrozen cluster Poisson variables for algebras on both sides of \eqref{eq:splitting_alg_hom} associated with the seeds induced by $\boldsymbol{\mathcal{T}}$ are the same.
Indeed, following the geometric description of cluster Poisson charts on the moduli spaces $\P_{G,\Sigma}$ given in \cite{GS19}, the unfrozen cluster Poisson variables are invariant under the rescaling of decorated flags by $h\in H$. Therefore, the claim follows directly. 

As a consequence, the extension of $\phi^\ast$ to the fields of rational functions preserves the upper cluster algebras:
\begin{align}\label{eq:q-isom_upper}
    \phi^\ast:  \mathscr{U}( \mathbf{i}_{\bm{\mathcal{T}'}}[\Lambda] \sqcup \mathbf{i}_{D})  \xrightarrow{\sim}
    \mathscr{U}(\mathbf{i}_{\bm{\mathcal{T}}}[\Lambda]) \otimes \mathscr{U}_m^{\mathrm{triv}}.
\end{align}

By the induction assumption, we have $\mathscr{U}( \mathbf{i}_{\bm{\mathcal{T}'}}[\Lambda]\sqcup \mathbf{i}_{D} ) = \cO(\A_{G,\Sigma'}^\times[\Lambda]) \otimes  \cO(\A_{G,D}^\times)$. 
Therefore we get $\mathscr{U}(\mathbf{i}_{\bm{\mathcal{T}}}[\Lambda]) \otimes \mathscr{U}_m^{\mathrm{triv}} = \cO(\A_{G,\Sigma}^\times[\Lambda]) \otimes \cO(H^J)$ by \eqref{eq:splitting_alg_hom} and \eqref{eq:q-isom_upper}. 
Then by applying \cref{lem:remove_isolated} to the isolated variables $h_j^{\varpi_s}$ in $\mathscr{U}_m^\mathrm{triv}$, we get $\mathcal{O}(\mathcal{A}_{G,\Sigma}^\times[\Lambda])= \mathscr{U}({\bf i}_{\boldsymbol{\mathcal{T}}}[\Lambda])$ as desired.
\end{proof}

\subsection{Stratifications of $\A_{G,\Sigma}$}\label{app:codimension}
Our purpose is to prove that $\A_{G,\Sigma}^{\mathcal{T},E_1,E_2} \subset \A_{G,\Sigma}$ has codimension $\geq 2$. Along the way, we also obtain formulae for codimensions of the subspaces in more general stratifications of $\A_{G,\Sigma}$ obtained by prescribing $w$-distances along arcs and boundary intervals. Recall that the dimension of a quotient stack $\mathcal{X}=[X/G]$ is defined to be $\dim \mathcal{X}:=\dim X - \dim G$.

\begin{conv}
For an oriented ideal arc $\alpha$ on $\Sigma$, let $\flA_\alpha^+$ (resp. $\flA_\alpha^-$) denote the decorated flag assigned to the initial (resp. terminal) marked point of $\alpha$. We endow each boundary interval with the orientation induced from the boundary. 
\end{conv}

Fix two diagonals $E_1,E_2$ of an ideal triangulation $\mathcal{T}$, and endow them arbitrary orientations. For $u,v \in W$, let $\A_{G,\Sigma}^{u,v} \subset \A_{G,\Sigma}^\times$ be the subspace such that $w(\flA_{E_1}^+,\flA_{E_1}^-)=u$, $w(\flA_{E_2}^+,\flA_{E_2}^-)=v$. Then we have
\begin{align*}
    \A_{G,\Sigma}^{\mathcal{T},E_1,E_2} = \bigsqcup_{u\neq w_0;~v\neq w_0} \A_{G,\Sigma}^{u,v}.
\end{align*}

We are going to prove:
\begin{thm}\label{prop:codimension_count}
We have $\dim \A_{G,\Sigma}- \dim \A_{G,\Sigma}^{u,v} = 2l(w_0) - l(u) - l(v)$ for all $u,v \in W$. In particular, $\A_{G,\Sigma}^{\mathcal{T},E_1,E_2}$ has codimension $\geq 2$.
\end{thm}

It turns out to be useful to include the following more general subspaces into consideration:
\begin{dfn}\label{dfn:moduli_cells}\ 
\begin{itemize}
    \item Given a tuple $\bw=\{w_E\} \in W^{\bB(\Sigma)}$, let 
    \begin{align*}
        \A_{G,\Sigma}^{\bw} \subset \A_{G,\Sigma}
    \end{align*}
    denote the subspace such that $w(\flA_E^+,\flA_E^-)=w_E$ for each $E \in \bB(\Sigma)$. For $\bw_0=\{w_0\} \in W^{\bB(\Sigma)}$ that assigns the longest element to each boundary interval, we have $\A_{G,\Sigma}^\times=\A_{G,\Sigma}^{\bw_0}$. 
    \item Given a collection $\cC$ of disjoint  oriented ideal arcs in $\Sigma$ and a tuple $\bv_\cC=\{v_\alpha\} \in W^{\cC}$, let 
    \begin{align*}
        \A_{G,\Sigma}[\cC;\bv_\cC] \subset \A_{G,\Sigma}
    \end{align*}
    denote the subspace such that $w(\flA_\alpha^+,\flA_\alpha^-)=v_\alpha$ for each $\alpha \in \cC$. 
\end{itemize}
Furthermore, let us write $\A_{G,\Sigma}^{\bw}[\cC;\bv_\cC]:=\A_{G,\Sigma}^\bw \cap \A_{G,\Sigma}[\cC;\bv_\cC]$. The original subspace of our interest is $\A_{G,\Sigma}^{u,v}=\A_{G,\Sigma}^{\bw_0}[E_1,E_2;u,v]$.    
\end{dfn}




\begin{prop}\label{lem:codim_boundary}
Let $\bw=\{w_E\} \in W^{\bB(\Sigma)}$. 
If $\Sigma$ is a polygon, then we assume that there exists $E_0 \in \bB(\Sigma)$ such that $w_{E_0}=w_0$. Then
$\dim \A_{G,\Sigma} - \dim \A_{G,\Sigma}^{\bw} = \sum_{E \in \bB(\Sigma)} (l(w_0) - l(w_E))$. In particular, $\A_{G,\Sigma}^\times \subset \A_{G,\Sigma}$ is open dense. 
\end{prop}
It is proved in \cref{subsub:codim_boundary} below, based on a relation to the \emph{braid varieties}. 

Fix an oriented ideal arc $\alpha$ and an element $v_\alpha \in W$. 
Cutting the surface $\Sigma$ along $\alpha$, we obtain a new marked surface $\Sigma'$, where $\alpha$ is splitted into two boundary intervals $\alpha',\alpha''$, where $\alpha'$ is the one following the boundary orientation. 
Then we consider the cutting map
\begin{align}\label{eq:cutting_map}
    \mathrm{cut}_\alpha: \A_{G,\Sigma}^{\bw}[\alpha;v_\alpha] \to \A_{G,\Sigma'}^{\bw'},
\end{align}
where $\bw':=\bw \cup \{v_\alpha, v_\alpha^{-1}\}$ under the identification $\bB(\Sigma')=\bB(\Sigma) \cup \{\alpha',\alpha''\}$, $v_\alpha$ (resp. $v_\alpha^{-1}$) being assigned to $\alpha'$ (resp. $\alpha''$). 
The image of $\mathrm{cut}_\alpha$ is characterized by the closed condition 
\begin{align}\label{eq:h-constraint}
    h(\flA_{\alpha'}^+,\flA_{\alpha'}^-) = h(\flA_{\alpha''}^-,\flA_{\alpha''}^+)
\end{align}

\begin{prop}\label{lem:cutting_fibers}
Each fiber of $\mathrm{cut}_\alpha$ is isomorphic to $G_{v_\alpha}$, where $G_u$ denotes the stabilizer of the pair $([U^+],\overline{u}.B^+)$ for $u \in W$. In particular if $v_\alpha=w_0$, then $\mathrm{cut}_\alpha$ is an isomorphism onto its image. 
\end{prop}
It is proved in \cref{subsub:cutting_fibers} below, where we explicitly write down the presentation of $\mathrm{cut}_\alpha$ on an atlas by fixing a generating system on $\Sigma$.

These two propositions might be of independent interest.
Assuming them, let us first complete the proof of \cref{prop:codimension_count}. 

\begin{proof}[Proof of \cref{prop:codimension_count}]
Let us consider the cutting maps
\begin{align*}
    \A_{G,\Sigma}^{u,v}=\A_{G,\Sigma}[E_1,E_2;u,v] \xrightarrow{\mathrm{cut}_{E_1}} \A_{G,\Sigma'}^{\bw'}[E_2;v] \xrightarrow{\mathrm{cut}_{E_2}} \A_{G,\Sigma''}^{\bw''},
\end{align*}
where $\Sigma':=\Sigma \setminus E_1$, $\Sigma'':=\Sigma' \setminus E_2$, 
and $E_1$ (resp. $E_2$) is splitted into two boundary intervals $E'_1,E''_1$ in $\Sigma'$ (resp. $E'_2,E''_2$ in $\Sigma''$). 
The elements $\bw'=\{u,u^{-1}\}$, $\bw''=\{u,u^{-1},v,v^{-1}\}$ are the ones naturally inherited via the cutting. 

Recall that the image of the cutting map has the constraint \eqref{eq:h-constraint}.
Then from \cref{lem:codim_boundary,lem:cutting_fibers}, we get
\begin{align*}
    \dim \A_{G,\Sigma}^{u,v} 
    &= \A_{G,\Sigma'}^{\bw'}[E_2;v]- \dim H + \dim G_u \\
    &= (\A_{G,\Sigma''}^{\bw''}- \dim H + \dim G_v) -\dim H + \dim G_u \\
    &= \dim \A_{G,\Sigma''}^{\bw''}-2\dim H + \dim G_u + \dim G_v \\
    &= (\dim \A_{G,\Sigma''} - 2(2l(w_0)-l(u)-l(v))) -2\dim H + \dim G_u + \dim G_v
\end{align*}
Observe that $\dim \A^\times_{G,\Sigma''}-2\dim H = \dim \A^\times_{G,\Sigma}$ by the second statement of \cref{lem:cutting_fibers}, and that $\dim \A_{G,X}=\dim \A^\times_{G,X}$ for $X=\Sigma,\Sigma''$ by the second statement of \cref{lem:codim_boundary}.
We also have
\begin{align}\label{eq:codim_stabilizer}
    \dim G_w=\dim (U^+ \cap \overline{w}U^+ \overline{w}^{-1})=l(w_0) - l(w)
\end{align}
for any $w \in W$ \cite[Section 8.3]{Spr}. Thus we get
\begin{align*}
    \dim \A_{G,\Sigma}^{u,v} &= \dim \A_{G,\Sigma} - 2(2l(w_0)-l(u)-l(v)) + (l(w_0) -l(u)) + (l(w_0) -l(v)) \\
    &= \dim \A_{G,\Sigma} - (2l(w_0)-l(u)-l(v)),
\end{align*}
as desired. 
\end{proof}

\subsubsection{Proof of \cref{lem:cutting_fibers}}\label{subsub:cutting_fibers}

Here are preparatory discussions. We may assume that $\Sigma$ is connected, without loss of generality. Recall from \cref{lem:moduli_atlas} the presentation $\A_{G,\Sigma}=[A_{G,\Sigma}/G]$, where $A_{G,\Sigma}$ is a quasi-affine $G$-variety that can be identified with $\Hom^{\mathrm{tw}}(\pi_1(T'\Sigma,\xi),G)$. 

More precisely, such an identification is provided if we fix a system $\cC_{\mathrm{arc}}$ of arcs in $T'\Sigma$ from the basepoint $\xi$ to the boundary intervals. Let us further specify a collection $\cC_{\mathrm{loop}}$ of loops in $\Sigma$ based at $x:=\pi(\xi)$ that generate $\pi_1(\Sigma,x)$. See \cref{fig:curve_system}. 
Choosing framings of these loops (\emph{i.e.}, lifts of them to $T'\Sigma$), we get a splitting $\pi_1(\Sigma,x) \to \pi_1(T'\Sigma,\xi)$ of the exact sequence \eqref{eq:bundle_sequence}, and hence an isomorphism
\begin{align*}
    \Hom^{\mathrm{tw}}(\pi_1(T'\Sigma,\xi),G) \cong G^{\cC_{\mathrm{loop}}} = G^{-\chi(\Sigma)+1}.
\end{align*}
Let us call such a collection $\cC:=\cC_\mathrm{arc} \cup \cC_\mathrm{loop}$ a \emph{generating system}. 
Below we work on the atlases $A_{G,\Sigma} \cong G^{\cC_\mathrm{loop}} \times \A_G^\bM$ by choosing an appropriate generating system $\cC$ for a given $\alpha$. The atlases of the relevant moduli spaces are denoted by $A_{G,\Sigma}^{\bw} \subset A_{G,\Sigma}$, and so on.

\begin{proof}[Proof of \cref{lem:cutting_fibers}]

\begin{figure}[ht]
    \centering
\begin{tikzpicture}[scale=0.9]
\draw(2.5,0) ellipse (3.8 and 2);
\fill[gray!20](0,0) circle(0.4cm);
\draw(2.5+0.5*3.8,0.866*2) coordinate(y);

\draw(0,0) circle(0.4cm);
	\draw(150:0.9);
\fill(180:0.4) circle(1.5pt);
\fill(60:0.4) circle(1.5pt);
\fill(-60:0.4) circle(1.5pt);
\pic[scale=0.8] at (3.5,-0.3) {handle};

\node[red] at (60:0.6) {};
	\draw (50:2)  coordinate(x);
	
{\color{myblue}
	\draw[->-] (x) ..controls (50:1) and (50:0.8).. (60:0.4);
	\draw[->-] (x) ..controls (-50:1) and (-50:0.6).. (-60:0.4);
	\draw[->-] (x) ..controls (-80:3) and (-180:1).. (180:0.4);
}
\draw[red,->-] (x) ..controls ++(0,-4) and (-100:1.1).. (-120:1.1) ..controls (-140:1.1) and (180:2.3).. (x);
\draw[red] (-60:1.7);

\draw[red,->-] (x) ..controls ++(-15:1) and (3,-0.1).. (3.5,-0.1);
\draw[red,dashed] (3.5,-0.1) ..controls (4,-0.1) and ($(y)+(-30:1)$).. (y);
\draw[red] (y) ..controls ++(170:1) and ($(x)+(15:1)$).. (x);
\begin{scope}[xshift=3.5cm]
\draw[red,-<-] (x) ..controls ++(4,0) and (2,-1).. (0,-1) ..controls (-1,-1) and ($(x)+(-30:1)$).. (x);
\draw[red] (-90:0.8);
\end{scope}
\draw[red,fill] (50:2) circle(1.5pt) node[above]{$\xi$};
\end{tikzpicture}
    \caption{A generating system $\cC=\cC_\mathrm{\textcolor{myblue}{arc}}\cup\cC_\mathrm{\textcolor{red}{loop}}$ that determines an isomorphism $\A_{G,\Sigma} \cong G^{\cC_\mathrm{loop}} \times \A_G^\bM$. Here the framings of curves are omitted.}
    \label{fig:curve_system}
\end{figure}

The proof is devided into the three cases.

\paragraph{\textbf{Case 1: $\Sigma'$ is connected, and $\alpha$ connects different boundary components of $\Sigma$.}}
Choose the curves shown in the left in \cref{fig:curve_system_case1}, and extend it to a generating system $\cC$ on $\Sigma$ by choosing other curves disjointly from $\alpha$. The curves disjoint from $\alpha$ naturally descend to $\Sigma'$. Together with the curves shown in the right of \cref{fig:curve_system_case1}, they form a generating system $\cC'$ on $\Sigma'$. 

\begin{figure}[ht]
    \centering
\begin{tikzpicture}
\fill[gray!20](0,0) circle(0.5cm);
\draw(0,0) circle(0.5cm);
\foreach \i in {0,90,180,270} \fill(\i:0.5) circle(1.5pt) coordinate(y\i);
\fill[gray!20](4,0) circle(0.5cm);
\draw(4,0) circle(0.5cm);
\foreach \i in {60,180,-60} \fill (4,0)++(\i:0.5) circle(1.5pt) coordinate(z\i);
\draw[thick,->-] (y0) --node[midway,below=0.2em]{$\alpha$} (z180);

\draw (2,2) coordinate(x);
\draw[red,->-] (x) ..controls ++(-100:1) and (1,-1).. (0,-1) ..controls(-0.5,-1) and (-1,-0.5).. (-1,0) ..controls (-1,1) and ($(x)+(-180:1)$).. (x) node[pos=0.5,above]{$c_1$};
\draw[red,-<-] (x) ..controls ++(-80:1) and (3,-1).. (4,-1) ..controls(4.5,-1) and (5,-0.5).. (5,0) ..controls (5,1) and ($(x)+(0:1)$).. (x) node[pos=0.5,above]{$c_2$};
\draw[myblue,->-] (x) to[out=-110,in=20] (y0);
\draw[myblue,->-] (x) to[out=-70,in=160] (z180);
\fill[red] (2,2) circle(2pt) node[above=0.2em]{$\xi$};
\draw(y0) ++(0.2,0.4) node{$\flA_1$};
\draw(z180) ++(-0.2,0.4) node{$\flA_2$};
\draw[thick,->] (5.5,1) --node[midway,above]{$\mathrm{cut}_\alpha$} (6.5,1);
\node at (2,-2) {$\Sigma$};

\begin{scope}[xshift=8cm]
\fill[gray!20](0,0) circle(0.5cm);
\filldraw[draw=black,fill=gray!20](-0.5,0) arc(180:10:0.5) coordinate(w1) 
--++(3,0) coordinate(w2) 
arc(170:-170:0.5) coordinate(w3) 
--++(-3,0) coordinate(w4) 
arc(-10:-180:0.5);
\foreach \i in {90,180,270} \fill(\i:0.5) circle(1.5pt) coordinate(y\i);
\foreach \i in {60,-60} \fill (4,0)++(\i:0.5) circle(1.5pt) coordinate(z\i);
\draw (2,2) coordinate(x);
\draw[red,-<-] (x) ..controls ++(-180:1) and (-1,1).. (-1,0) ..controls(-1,-0.5) and (-0.5,-1).. (0,-1) --(4,-1) ..controls (4.5,-1) and (5,-0.5).. (5,0) ..controls (5,1) and ($(x)+(00:1)$).. (x);
\node[red] at (1.8,-1.3) {$c$};
\draw[myblue,->-] (x) to[out=-110,in=20] (w1);
\draw[myblue,->-] (x) to[out=-70,in=160] (w2);
\draw[myblue,->-] (x) ..controls ++(-160:1) and (-0.8,1).. (-0.8,0) ..controls(-0.8,-0.8) and (0.3,-1).. (w4); 
\draw[myblue,->-] (x) ..controls ++(-20:1) and (4.8,1).. (4.8,0) ..controls(4.8,-0.8) and (3.7,-1).. (w3); 
\fill[red] (2,2) circle(2pt) node[above=0.2em]{$\xi$};
\draw(w1) ++(0.2,0.4) node{$\flA_1$};
\draw(w2) ++(-0.2,0.4) node{$\flA_2$};
\draw[dashed] (w4) --++(0,-1.2) node[below]{$\rho(c_1)^{-1}.\flA_1$};
\draw[dashed] (w3) --++(0,-1.2) node[below]{$\rho(c_2).\flA_2$};
\foreach \i in {1,2,3,4} \fill(w\i) circle(1.5pt);
\node at (2,-2) {$\Sigma'$};
\end{scope}
\end{tikzpicture}
    \caption{The choice of curves: Case 1.}
    \label{fig:curve_system_case1}
\end{figure}
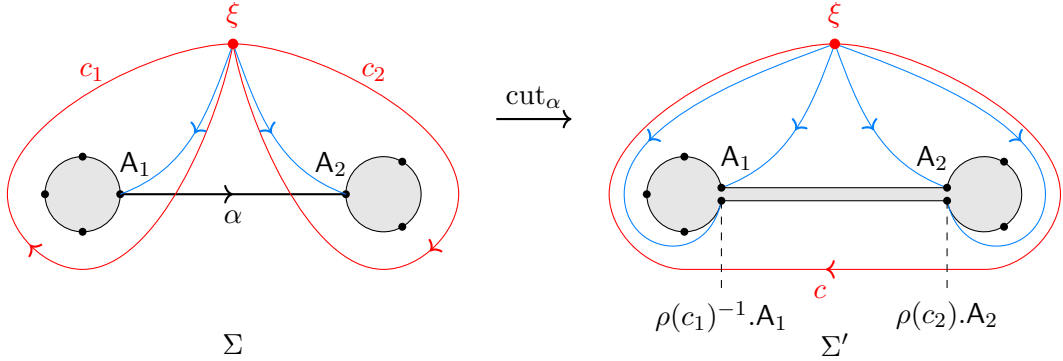

Note that $\mathrm{rank}\pi_1(\Sigma') = \mathrm{rank}\pi_1(\Sigma) -1$. We have an embedding $\iota:\pi_1(\Sigma',x) \to \pi_1(\Sigma,x)$ given by $\iota(c):=c_2\ast c_1$ and $\iota(d):=d$ for $d \in \cC'_\mathrm{loop} \setminus \{c\}$. It induces a projection 
\begin{align*}
    \iota^\ast: \Hom^{\mathrm{tw}}(\pi_1(T'\Sigma,\xi),G) \to \Hom^{\mathrm{tw}}(\pi_1(T'\Sigma',\xi),G),
\end{align*}
which is a principal $G$-bundle. Indeed, we have $\iota^\ast\rho(c)=\rho(c_2)\rho(c_1)$ and hence the $G$-action 
\begin{align}\label{eq:G-action_case1}
    (\rho(c_1),\rho(c_2)) \mapsto (g\rho(c_1),\rho(c_2)g^{-1}), \quad g \in G
\end{align}
parametrizes the fiber over $\rho \in \Hom^{\mathrm{tw}}(\pi_1(T'\Sigma',\xi),G)$. Then the relevant components of the presentation $\widetilde{\mathrm{cut}}_\alpha: A_{G,\Sigma}^{\bw}[\alpha;v_\alpha] \to \A_{G,\Sigma'}^{\bw'}$ of \eqref{eq:cutting_map} are given by
\begin{align*}
    &\Hom^{\mathrm{tw}}(\pi_1(T'\Sigma,\xi),G) \times \A_G^2 \to \Hom^{\mathrm{tw}}(\pi_1(T'\Sigma',\xi),G) \times \A_G^4, \\ &(\rho;\flA_1,\flA_2) \mapsto (\iota^\ast\rho;\flA_1,\flA_2, \rho(c_1)^{-1}.\flA_1,\rho(c_2).\flA_2).
\end{align*}
Observe that the $G$-action \eqref{eq:G-action_case1} preserves the $\A_G^4$-component if and only if $g \in \mathrm{Stab}(\flA_1,\flA_2) \subset G$. Hence the fiber of $\widetilde{\mathrm{cut}}_\alpha$ equals  $\mathrm{Stab}(\flA_1,\flA_2)$, which is isomorphic to $G_u$.

\paragraph{\textbf{Case 2: $\Sigma'$ is connected, and $\alpha$ connects the same boundary component $C$ of $\Sigma$.}}
In this case, one can find a handle that contains $\alpha$ as shown in \cref{fig:curve_system_case2}. (Indeed, if we shrink the boundary component $C$ to a puncture $p$, $\alpha$ becomes a based loop $\alpha_p$ at $p$. If $\alpha_p$ was non-essential, \emph{i.e.}, belonged to the kernel of the intersection form, then it would cut $\Sigma$ into a disconnected surface, which contradicts to our assumption. Then it is a standard argument in topology to find such a handle.) 

Choose the curves shown in the left in \cref{fig:curve_system_case2}, and extend it to a generating system $\cC$ on $\Sigma$ by choosing other curves disjointly from $\alpha$. The curves disjoint from $\alpha$ naturally descend to $\Sigma'$. Together with the curves shown in the right of \cref{fig:curve_system_case2}, they form a generating system $\cC'$ on $\Sigma'$. 

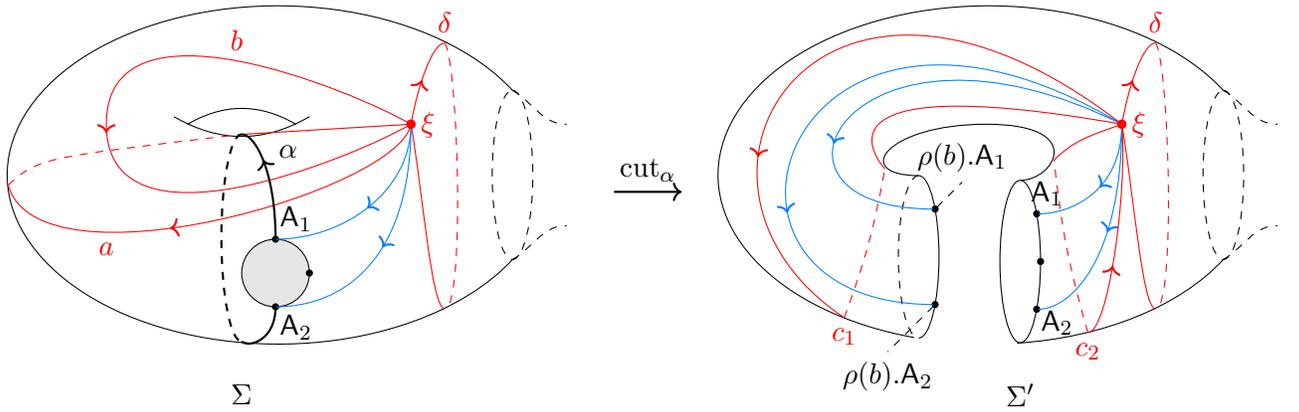
\begin{figure}[ht]
    \centering
\begin{tikzpicture}[scale=0.9]
\draw(0,1.5) arc(30:330:4cm and 2.5cm) coordinate(w);
\draw[dashed] (0,1.5) arc(90:-270:0.3cm and 1.25cm);
\draw[dashed] (0,1.5) to[out=-30,in=180] ++(0.8,-0.5);
\draw[dashed] (w) to[out=30,in=180] ++(0.8,0.5);
\pic at (-4,1) {handle};
\filldraw[fill=gray!20,draw=black] (-3.5,-1.2) coordinate(x) circle(0.5cm);
\foreach \i in {0,1,3} \fill(x)++(90*\i:0.5) circle(1.5pt) coordinate(z\i);
\draw(-1.5,1) coordinate(y);
\draw[red,-<-] (y) ..controls ++(-150:1) and (-6,-1).. (-6,1) ..controls (-6,3) and ($(y)+(150:1)$)..node[pos=0.5,above]{$b$} (y);
\draw[red,->-={0.6}{}] (y) ..controls ++(-100:1) and (-7,-1.5)..node[pos=0.7,below]{$a$} (-7.45,0.1);
\draw[red,dashed] (-7.45,0.1) ..controls (-7,0.5) .. (-4,0.85);
\draw[red] (-4,0.85) --(y);
\draw[red,->-] (y) ..controls++(80:0.5) and (-1.2,2.2).. (-1,2.2) node[above]{$\delta$};
\draw[red,dashed] (-1,2.2) ..controls++(0.2,-0.3) and (-0.7,-1.73).. (-1,-1.73);
\draw[red] (-1,-1.73) ..controls (-1.2,-1.73) and ($(y)+(-80:1)$).. (y);
\draw[myblue,->-] (y) ..controls ++(0,-1) and ($(z1)+(1,0)$).. (z1);
\draw[myblue,->-] (y) ..controls ++(0,-2) and ($(z3)+(1,0)$).. (z3);
\draw[thick,->-={0.7}{}](z1) ..controls++(0,1) and (-3.8,0.85)..node[pos=0.6,right=0.3em]{$\alpha$} (-4,0.85);
\draw[thick,dashed] (-4,0.85) arc(90:270:0.3cm and 1.53cm) coordinate(w);
\draw[thick] (z3) ..controls++(0,-0.5) and ($(w)+(0.1,-0.1)$).. (w);
\draw(z1)++(0.3,0.3) node{$\flA_1$};
\draw(z3)++(0.3,-0.3) node{$\flA_2$};
\fill[red](y) circle(2pt) node[right]{$\xi$};
\node at (-4,-3) {$\Sigma$};
\draw[thick,->] (1.5,0) --node[midway,above]{$\mathrm{cut}_\alpha$} (2.5,0);

\begin{scope}[xshift=10.5cm]
\draw(0,1.5) arc(30:254:4cm and 2.5cm);
\draw[dashed] (0,1.5) arc(90:-270:0.3cm and 1.25cm);
\draw[dashed] (0,1.5) to[out=-30,in=180] ++(0.8,-0.5);
\draw(0,1.5-2.5) coordinate(w);
\draw[dashed] (w) to[out=30,in=180] ++(0.8,0.5);
\draw(w) arc(-30:-84:4cm and 2.5cm);
\draw (-4.5,-1)++(0,1.24) coordinate(A1) arc(90:-90:0.3cm and 1.2cm) node[pos=0.3,inner sep=0](x1){} node[pos=0.7,inner sep=0](x2){} coordinate(A2);
\fill(x1) circle(1.5pt);
\fill(x2) circle(1.5pt);
\draw[dashed] (-4.5,-1)++(0,1.24) arc(90:270:0.3cm and 1.2cm);
\draw (-3,-1.07)++(0,1.24) coordinate(b') arc(90:-90:0.3cm and 1.2cm) node[pos=0.3,inner sep=0](y1){} node[pos=0.5,inner sep=0](y2){} node[pos=0.7,inner sep=0](y3){} coordinate(b);
\draw (-3,-1.07)++(0,1.24) arc(90:270:0.3cm and 1.2cm);
\fill(y1) circle(1.5pt);
\fill(y2) circle(1.5pt);
\fill(y3) circle(1.5pt);
\draw(-3,-1.03)++(0,1.2) coordinate(B1);
\draw(-3,-1.03)++(0,-1.2) coordinate(B2);
\draw(A1) ..controls++(-1,0) and (-5,1).. (-3.5,1) ..controls (-2,1) and ($(B1)+(0.5,0)$).. (B1);

\draw(-1.5,1) coordinate(y);
\draw[myblue,->-] (y) ..controls ++(0,-1) and ($(y1)+(0.5,0)$).. (y1);
\draw[myblue,->-] (y) ..controls ++(0,-2) and ($(y3)+(0.5,0)$).. (y3);
\draw[myblue,->-={0.7}{}] (y) ..controls ++(-4,2) and ($(x1)+(-3,0)$).. (x1);
\draw[myblue,->-={0.7}{}] (y) ..controls ++(-5,3) and ($(x2)+(-4,0)$).. (x2);
\draw[red,->-] (y) ..controls++(80:0.5) and (-1.2,2.2).. (-1,2.2) node[above]{$\delta$};
\draw[red,dashed] (-1,2.2) ..controls++(0.2,-0.3) and (-0.7,-1.73).. (-1,-1.73);
\draw[red] (-1,-1.73) ..controls (-1.2,-1.73) and ($(y)+(-80:1)$).. (y);
\draw[red,-<-={0.7}{}] (y) ..controls++(-85:1) and ($(b)+(1,0.15)+(0.3,0)$).. ($(b)+(1,0.15)$) node[below]{$c_2$};
\draw[red,dashed] ($(b)+(1,0.15)$) to[bend left=5] ($(b')+(0.5,0.3)$);
\draw[red] ($(b')+(0.5,0.3)$) ..controls++(0.3,0.3) and ($(y)+(200:0.5)$).. (y);
\draw[red,->-={0.7}{}] (y) ..controls ++(-4.5,3.5) and ($(A2)+(-1.1,0.3)+(-3,2)$).. ($(A2)+(-1.1,0.3)$) node[below]{$c_1$};
\draw[red,dashed] ($(A2)+(-1.1,0.3)$) to[bend right=5] ($(A1)+(-0.5,0.1)$);
\draw[red] ($(A1)+(-0.5,0.1)$) ..controls++(0,0) and ($(y)+(170:5)$).. (y);
\fill[red](y) circle(2pt) node[right]{$\xi$};
\draw(y1)++(0.15,0.3) node{$\flA_1$};
\draw(y3)++(0.3,-0.2) node{$\flA_2$};
\draw[dashed] (x1) --++(0.4,0.4) node[above]{$\rho(b).\flA_1$};
\draw[dashed] (x2) --++(-0.7,-0.7) node[below]{$\rho(b).\flA_2$};
\node at (-3,-3) {$\Sigma'$};
\end{scope}
\end{tikzpicture}
    \caption{The choice of curves: Case 2. Here $\delta$ can be written as a product of other loops in $\cC$.}
    \label{fig:curve_system_case2}
\end{figure}

Note that $\mathrm{rank}\pi_1(\Sigma') = \mathrm{rank}\pi_1(\Sigma) -1$. 
We have an embedding $\iota:\pi_1(\Sigma',x) \to \pi_1(\Sigma,x)$ given by $\iota(c_1):=b\ast a \ast b^{-1}$, $\iota(c_2):=b\ast a \ast b^{-1}\ast\delta$ and $\iota(d):=d$ for $d \in \cC'_\mathrm{loop} \setminus \{c_1,c_2\}$. It induces a projection 
\begin{align*}
    \iota^\ast: \Hom^{\mathrm{tw}}(\pi_1(T'\Sigma,\xi),G) \to \Hom^{\mathrm{tw}}(\pi_1(T'\Sigma',\xi),G),
\end{align*}
which is a principal $G$-bundle. Indeed, we have $\iota^\ast\rho(c_1)=\rho(b)\rho(a)\rho(b)^{-1}$ and $\iota^\ast\rho(c_2)=\rho(b)\rho(a)\rho(b)^{-1}\rho(\delta)$; hence the $G$-action 
\begin{align}\label{eq:G-action_case2}
    (\rho(a),\rho(b)) \mapsto (g^{-1}\rho(a)g,\rho(b)g), \quad g \in G
\end{align}
parametrizes the fiber over $\rho \in \Hom^{\mathrm{tw}}(\pi_1(T'\Sigma',\xi),G)$. Then the relevant components of the presentation $\widetilde{\mathrm{cut}}_\alpha: A_{G,\Sigma}^{\bw}[\alpha;v_\alpha] \to \A_{G,\Sigma'}^{\bw'}$ of \eqref{eq:cutting_map} are given by
\begin{align*}
    &\Hom^{\mathrm{tw}}(\pi_1(T'\Sigma,\xi),G) \times \A_G^2 \to \Hom^{\mathrm{tw}}(\pi_1(T'\Sigma',\xi),G) \times \A_G^4, \\ &(\rho;\flA_1,\flA_2) \mapsto (\iota^\ast\rho;\flA_1,\flA_2, \rho(b).\flA_1,\rho(b).\flA_2).
\end{align*}
Observe that the $G$-action \eqref{eq:G-action_case2} preserves the $\A_G^4$-component if and only if $g \in \mathrm{Stab}(\flA_1,\flA_2) \subset G$. Hence the fiber of $\widetilde{\mathrm{cut}}_\alpha$ equals  $\mathrm{Stab}(\flA_1,\flA_2)$, which is isomorphic to $G_u$.

\paragraph{\textbf{Case 3: $\Sigma'$ is disconnected.}}
Let $\Sigma'=\Sigma_1\sqcup \Sigma_2$ be the decomposition into connected components, where $\Sigma_1,\Sigma_2$ are connected marked surfaces. Choose the basepoint $x$ on $\alpha$. Then $\pi_1(\Sigma,x) = \pi_1(\Sigma_1,x) \ast \pi_1(\Sigma_2,x)$. We can choose the generating system $\cC$ on $\Sigma$ so that they do not cross $\alpha$, as shown in \cref{fig:curve_system_case3}. It induces a generating system $\cC'$ on $\Sigma'$, where the two arcs connecting to the endpoints of $\alpha$ are doubled. 

\begin{figure}[ht]
    \centering
\begin{tikzpicture}[scale=0.9]
\draw (-2,3) -- (2,3);
\draw (-2,0) -- (2,0);
\draw(0,1.5) coordinate(x);
\pic at (-3,1.5) {handle};
\draw[red,->-] (x) ..controls ++(-4,-1) and (-4.5,1.2).. (-4.5,1.5);
\draw[red] (x) ..controls ++(-4,1) and (-4.5,1.8).. (-4.5,1.5);
\draw[myblue,->-={0.7}{}] (x) to[out=150,in=0]++(160:3);
\draw[myblue,->-={0.7}{}] (x) to[out=-150,in=0]++(-160:3);
\draw[myblue,->-={0.7}{}] (x) to[out=30,in=180]++(20:3);
\draw[myblue,->-={0.7}{}] (x) to[out=-30,in=180]++(-20:3);
\draw[myblue,->-={0.7}{}] (x) --++(3,0);
\draw[myblue,->-={0.7}{}] (x) to[bend left=15] (0,3);
\draw[myblue,->-={0.7}{}] (x) to[bend left=15] (0,0);
\draw[thick,->-={0.7}{}] (0,0) -- (0,3);
\node at (0.3,2.5) {$\alpha$};
\fill[red] (0,1.5) circle(2pt) node[below left]{$\xi$};
\fill (0,0) circle(1.5pt);
\fill (0,3) circle(1.5pt);
\node at (0,-1.2) {$\Sigma$};
\node at (0,3.5) {$\flA_\alpha^-$};
\node at (0,-0.5) {$\flA_\alpha^+$};
\draw[thick,->] (3.5,1.5) --node[midway,above]{$\mathrm{cut}_\alpha$} (4.5,1.5);

\begin{scope}[xshift=9.5cm]
\draw (-2,3) -- (0,3);
\draw (-2,0) -- (0,0);
\draw (0,0) -- (0,3);
\draw (1,3) -- (3,3);
\draw (1,0) -- (3,0);
\draw (1,0) -- (1,3);
\draw(0,1.5) coordinate(x);
\draw(1,1.5) coordinate(y);
\pic at (-3,1.5) {handle};
\draw[red,->-] (x) ..controls ++(-4,-1) and (-4.5,1.2).. (-4.5,1.5);
\draw[red] (x) ..controls ++(-4,1) and (-4.5,1.8).. (-4.5,1.5);
\draw[myblue,->-={0.7}{}] (x) to[out=150,in=0]++(160:3);
\draw[myblue,->-={0.7}{}] (x) to[out=-150,in=0]++(-160:3);
\draw[myblue,->-={0.7}{}] (y) to[out=30,in=180]++(20:3);
\draw[myblue,->-={0.7}{}] (y) to[out=-30,in=180]++(-20:3);
\draw[myblue,->-={0.7}{}] (y) --++(3,0);
\draw[myblue,->-={0.7}{}] (x) to[bend left=15] (0,3);
\draw[myblue,->-={0.7}{}] (x) to[bend right=15] (0,0);
\draw[myblue,->-={0.7}{}] (y) to[bend right=15] (1,3);
\draw[myblue,->-={0.7}{}] (y) to[bend left=15] (1,0);
\fill[red] (0,1.5) circle(2pt) node[below left]{$\xi$};
\fill[red] (1,1.5) circle(2pt) node[below right]{$\xi$};
\fill (0,0) circle(1.5pt);
\fill (0,3) circle(1.5pt);
\fill (1,0) circle(1.5pt);
\fill (1,3) circle(1.5pt);
\node at (0,3.5) {$\flA_\alpha^-$};
\node at (0,-0.5) {$\flA_\alpha^+$};
\node at (1,3.5) {$\flA_\alpha^-$};
\node at (1,-0.5) {$\flA_\alpha^+$};
\node at (0.3,2.5) {$\alpha'$};
\node at (0.7,0.5) {$\alpha''$};
\node at (0.5,-1.2) {$\Sigma'=\Sigma_1\sqcup \Sigma_2$};
\end{scope}
\end{tikzpicture}
    \caption{The choice of curves: Case 3.}
    \label{fig:curve_system_case3}
\end{figure}

Let $\overline{A}_{G,\Sigma}^{\bw}[\alpha;v_\alpha] \subset A_{G,\Sigma}^{\bw}[\alpha;v_\alpha]$ denote the subspace such that $(\flA_\alpha^+,\pi(\flA_\alpha^-))=([U^+],\overline{u}.B^+)$. Since any point $(\rho,\lambda) \in A_{G,\Sigma}^{\bw}[\alpha;v_\alpha]$ can be translated into such a configuration by the $G$-action, we have
\begin{align*}
    \A_{G,\Sigma}^{\bw}[\alpha;v_\alpha] = [A_{G,\Sigma}^{\bw}[\alpha;v_\alpha]/G] = [\overline{A}_{G,\Sigma}^{\bw}[\alpha;v_\alpha]/G_u].
\end{align*}
A similar argument shows
\begin{align*}
    \A_{G,\Sigma'}^{\bw'} = [A_{G,\Sigma}^{\bw'}/(G \times G)] = [\overline{A}_{G,\Sigma}^{\bw'}/(G_u \times G_u)].
\end{align*}
Observe that $\overline{A}_{G,\Sigma}^{\bw}[\alpha;v_\alpha]$ is isomorphic via $\widetilde{\mathrm{cut}}_\alpha$ to the closed subspace of $\overline{A}_{G,\Sigma}^{\bw'}$ characterized by the condition $h(\flA_{\alpha'}^+,\flA_{\alpha'}^-) = h(\flA_{\alpha''}^-,\flA_{\alpha''}^+)$. Identifying these spaces, we have
\begin{align*}
    \mathrm{cut}_\alpha: [\overline{A}_{G,\Sigma}^{\bw}[\alpha;v_\alpha]/G_u] \to [\overline{A}_{G,\Sigma}^{\bw}[\alpha;v_\alpha]/(G_u \times G_u)],
\end{align*}
which has the fiber $(G_u \times G_u) /G_u \cong G_u$. The assertion is proved.
\end{proof}

\subsubsection{Proof of \cref{lem:codim_boundary}}\label{subsub:codim_boundary}

The knowledge on the \emph{braid varieties} is useful below. We refer the reader to \cite{CGGLSS22} for details. Here we introduce a stacky variant of them. 
Given a positive braid word $\beta=(s_1,\dots,s_l)$ with $s_1,\dots,s_l \in S$, the associated \emph{braid stack} is defined to be the quotient stack
\begin{align*}
    X(\beta):=\big[\{ &(\flA_0,\flB_1,\dots,\flB_{l-1},\flB_l) \in \A_G \times \B_G^{l} \mid \\
    &w(\flB_{i},\flB_{i-1})=r_{s_i}~\mbox{for $i=1,\dots,l$}; ~w(\flB_0,\flB_l)=\delta(\beta)\}/G\big].
\end{align*}
Here $\delta(\beta) \in W$ denotes the \emph{Demazure product}. A configuration in $X(\beta)$ is illustrated as
\begin{equation*}
    \begin{tikzcd}
    \flB_{l-1} \ar[d,"s_l"']& \ar[l,"s_{l-1}"'] \cdots & \flB_{1}  \ar[l,"s_{2}"']\\
    \flB_l & & \flA_0 \ar[u,"s_1"']\ar[ll,"\delta(\beta)"] .
    \end{tikzcd}
\end{equation*}
We call $\flB_l\xleftarrow{\delta(\beta)} \flA_0$ the \emph{bottom side} of the configuration. 
If $\delta(\beta)=w_0$, then the $G$-action is free and we can uniquely translate the generic pair $(\flA_0,\flB_{l+1})$ to the standard pinning $([U^+],B^-)$. Hence
\begin{align*}
    X(\beta) \cong \{ (\flB_1,\dots,\flB_{l-1}) \in \B_G^{l-1} \mid 
    w(\flB_{i-1},\flB_{i})=r_{s_i}~\mbox{for $i=1,\dots,l$},~\flB_0:=B^+,\flB_l:=B^-\}
\end{align*}
is an affine algebraic variety. In this case, it is known that $\dim X(\beta)=|\beta| -l(w_0)$ \cite[Theorem 20]{Escobar}, where $|\beta|=l$ is the length of $\beta$. 

In general, if $\delta(\beta)=w \in W$, the pair $(\flA_0,\flB_{l+1})$ can be translated to the position $([U^+],\overline{w}.B^+)$, whose stabilizer subgroup is $G_w$. Hence we get
\begin{align*}
    X(\beta) \cong \big[ \{ (\flB_1,\dots,\flB_{l-1}) \in \B_G^{l-1} \mid 
    w(\flB_{i-1},\flB_{i})=r_{s_i}~\mbox{for $i=1,\dots,l$},~\flB_0:=B^+,\flB_l:=\overline{w}.\flB_0\}/G_w\big]
\end{align*}
We again know the dimension of the affine algebraic variety before quotient by \cite[Theorem 20]{Escobar}, and we get 
\begin{align}\label{eq:dim_braid_variety}
    \dim X(\beta) = |\beta| - l(w)-\dim G_w = |\beta| - l(w_0).
\end{align}

Let us drop the condition on the relative position between $\flB_0$ and $\flB_l$, defining
\begin{align*}
    \overline{X}(\beta):=\big[\{ &(\flA_0,\flB_1,\dots,\flB_{l-1},\flB_l) \in \A_G \times \B_G^{l} \mid \\
    &w(\flB_{i},\flB_{i-1})=r_{s_i}~\mbox{for $i=1,\dots,l$}\}/G\big].
\end{align*}
Note that before quotient by $G$, the space is isomorphic to $\mathcal{A}_G\times\mathbb{A}^{|\beta|}$, since a pair of flags of $w$-distance $r_s$ for $s \in S$ can be translated into the position $(B^+,x_s(t)\overline{r}_s.B^+)$ for some $t \in \mathbb{A}$. Therefore
\begin{equation}
\dim \overline{X}(\beta) = \dim \mathcal{A}_G + |\beta| - \dim G = |\beta| - l(w_0).\label{eq:dim_braid_1}
\end{equation}

\paragraph{\textbf{Case 1: $\Sigma$ is a $k$-gon ($k \geq 3$).}} 
Enumerate the boundary intervals as $E_0,E_1,\dots,E_{k-1}$ in this counter-clockwise order so that $w_{E_0}=w_0$. Choose a reduced word $\bs(i)$ of $w_{E_i}$ for $i=1,\dots,k-1$. 
Then the concatenation
\begin{align*}
    \beta:=\bs(1)\bs(2)\dots\bs(k-1)
\end{align*}
is a positive braid word of length $l=\sum_{i=1}^{k-1}l(w_i)$ such that $\delta(\beta)=w_0$. Let $\flA_0$ be the decoration assigned to the terminal endpoint of $E_0$. 
Then we have an isomorphism
\begin{align*}
    &\A_{G,\Sigma}^{\bw} \xrightarrow{\sim} X(\beta) \times H^{k-1}, \\
    &[\flA_0,\dots,\flA_{k-1}] \mapsto ([\flA_0,\widetilde{\flB}_1,\dots,\widetilde{\flB}_{l-1}], h(\flA_0,\flA_1),\dots,h(\flA_{k-2},\flA_{k-1})). 
\end{align*}

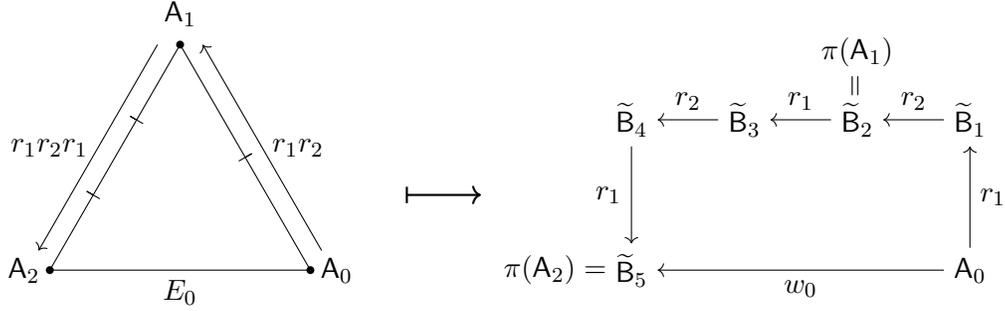
\begin{figure}
    \centering
\begin{tikzpicture} 
\foreach \i in {0,1,2}
{
\draw(\i*120-30:2) coordinate(A\i) --(\i*120+90:2);
\fill(\i*120-30:2) circle(1.5pt);
}
\draw(A0) node[right]{$\flA_0$};
\draw(A1) node[above=0.3em]{$\flA_1$};
\draw(A2) node[left]{$\flA_2$};
\draw($(A0)!0.5!(A2)$) node[below]{$E_0$};
\draw[<-] (A2)++(120:0.3) --node[midway,left]{$r_1r_2r_1$} ($(A1)+(-0.3,0)$);
\draw($(A2)!0.333!(A1)$) --++(150:0.1) --++(-30:0.2);
\draw($(A2)!0.666!(A1)$) --++(150:0.1) --++(-30:0.2);
\draw[->] (A0)++(60:0.3) --node[midway,right]{$r_1r_2$} ($(A1)+(0.3,0)$);
\draw($(A0)!0.5!(A1)$) --++(30:0.1) --++(-150:0.2);
\draw[|->,thick] (3,0) -- (4,0);
\begin{scope} [xshift=6cm]
\node (A) at (0,-1) {$\widetilde{\flB}_5$};
\node (C) at (4.5,-1) {$\flA_0$};
\node[anchor=east] at (-0.2,-1) {$\pi(\flA_2)=$};
\foreach \i [count=\xi] in {0,1,2,3} \node (B\i) at (4.5-\i*1.5,1) {$\widetilde{\flB}_{\xi}$};
\draw[->] (C) --node[midway,right]{$r_1$} (B0);
\draw[->] (B0) --node[midway,above]{$r_2$} (B1);
\draw[->] (B1) --node[midway,above]{$r_1$} (B2);
\draw[->] (B2) --node[midway,above]{$r_2$} (B3);
\draw[->] (B3) --node[midway,left]{$r_1$} (A);
\draw[<-] (A) --node[midway,below]{$w_0$} (C);
\draw(B1)++(0,0.45) node[rotate=90]{$=$};
\draw(B1)++(0,0.9) node{$\pi(\flA_1)$};
\end{scope}
\end{tikzpicture}
    \caption{A map from $\A_{G,\Sigma}^\bw$ to $X(\beta)$. Here $\Sigma$ is a triangle, and $\bw=(w_{E_0},w_{E_1},w_{E_2})=(w_0,r_1r_2,r_1r_2r_1)$.}
    \label{fig:polygon_braid}
\end{figure}

See \cref{fig:polygon_braid}.
Here for each pair $(\flB_{i-1},\flB_{i})$ with $i=1,\dots,k-1$, we take the sequence of interpolating flags with their distances given by $\bs(i)$, and denote by $\widetilde{\flB}_1,\dots,\widetilde{\flB}_{l-1}$ the resulting sequence from $\flB_0$ to $\flB_{k-1}$. The bottom side is $\flB_{k-1}\xleftarrow{w_0} \flA_0$. 
Observe that the data $[\flA_0,\flA_1,\dots,\flA_{k-1}]$ can be uniquely recovered from $[\flA_0,\flB_1,\dots,\flB_{k-1}]$ and $h(\flA_{i-1},\flA_i)$ for $i=1,\dots,k-1$. It follows that 
\begin{align*}
    \dim \A_{G,\Sigma}^{\bw}= \dim X(\beta) + (k-1)\rank G = \sum_{i=1}^{k-1} l(w_i) - l(w_0) +(k-1)\rank G
\end{align*}
by \eqref{eq:dim_braid_variety}. 
The open dense part $\A_{G,\Sigma}^\times$ corresponds to $w_{E_i}=w_0$ for all $i=1,\dots,k-1$. Therefore
\begin{align*}
    \dim \A_{G,\Sigma} - \dim \A_{G,\Sigma}^{\bw} = \dim \A^\times_{G,\Sigma} - \dim \A_{G,\Sigma}^{\bw} = \sum_{i=1}^{k-1}(l(w_0)-l(w_{E_i})),
\end{align*}
as desired. 

\begin{figure}[ht]
    \centering
\begin{tikzpicture}[scale=0.9]
\draw(2.5,0) ellipse (3.8 and 2);
\fill[gray!20](0,-0.5) circle(0.4cm);
\draw(2.5+0.5*3.8,0.866*2) coordinate(y);

\draw(0,-0.5) circle(0.4cm);
	\draw(150:0.9);
\fill(0,-0.5)++(180:0.4) circle(1.5pt);
\fill(0,-0.5)++(60:0.4) circle(1.5pt);
\fill(0,-0.5)++(-60:0.4) circle(1.5pt) coordinate(z);
\pic[scale=0.8] at (3.5,-0.3) {handle};

\fill[gray!20](-0.2,-0.3)++(50:2) arc(45:45+360:0.4cm);
\draw(-0.2,-0.3)++(50:2)  coordinate(x) arc(45:45+360:0.4cm);
	
{\color{myblue}
	\draw[->-] (x) ..controls ++(-45:1) and ($(z)+(30:1)$).. (z);
}

\draw[red,->-] (x) ..controls ++(-15:1) and (3,-0.1).. (3.5,-0.1);
\draw[red,dashed] (3.5,-0.1) ..controls (4,-0.1) and ($(y)+(-30:1)$).. (y);
\draw[red] (y) ..controls ++(170:1) and ($(x)+(15:1)$).. (x);
\begin{scope}[xshift=3.5cm]
\draw[red,-<-] (x) ..controls ++(4,0) and (2,-1).. (0,-1) ..controls (-1,-1) and ($(x)+(-30:1)$).. (x);
\end{scope}
\draw[fill] (x) circle(1.5pt) node[above,scale=0.9]{$m_0$};
\draw[fill] (z) circle(1.5pt) node[below,scale=0.9]{$m_1$};
\end{tikzpicture}
    \caption{A cut system $\cC$ of $\Sigma$, which consists of $2g$ loops based at $m_0$ and $b-1$ arcs connecting $m_0$ to $m_\nu$, $\nu=1,\dots,b-1$.}
    \label{fig:cut_system}
\end{figure}
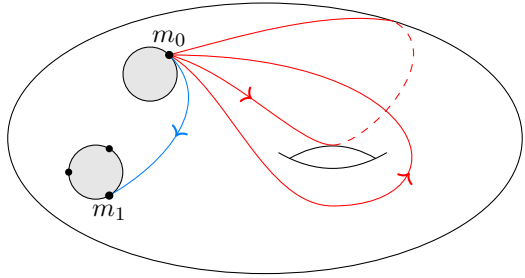

\paragraph{\textbf{Case 2: $\Sigma$ is not a polygon.}}
This case can be reduced to the polygon case, as follows. Let $\partial \Sigma=\bigsqcup_{\nu=0}^{b-1} \partial_\nu$.  
Choose a special point $m_\nu \in \partial_\nu$ for $\nu=1,\dots,b-1$, and a \emph{cut system} $\cC$ as shown in \cref{fig:cut_system}. The collection $\cC$ consists of $2g+b-1>0$ curves. 
By cutting $\Sigma$ along the curves in $\cC$, we get a polygon $\Pi$ with $4g+2(b-1)+|\bM|$ sides. Let us consider the subspace $\A_{G,\Sigma}^{\bw}[\cC;v_\cC] \subset \A_{G,\Sigma}^{\bw}$, where $v_\cC=\{v_\alpha\}_{\alpha \in \cC} \in W^\cC$ and recall \cref{dfn:moduli_cells}. Let us consider the composite of cutting maps along the arcs in $\cC $:
\begin{align}\label{eq:cut_polygon}
    \mathrm{cut}_\cC:=\prod_{\alpha \in \cC} \mathrm{cut}_\alpha: \A_{G,\Sigma}^{\bw}[\cC;v_\cC] \to \A_{G,\Pi}^{\bw_\cC},
\end{align}
where $\bw_\cC:=\bw\cup \{v_\alpha,v_\alpha^{-1}\mid \alpha \in \cC\}$ is naturally inherited via the cutting, the pair $\{v_\alpha^\pm\}$ being assigned to the two boundary intervals arising from $\alpha$. 
See \cref{fig:surface_braid}. 
Observe that the image $\mathrm{Im}(\mathrm{cut}_\cC) \subset \A_{G,\Pi}^{\bw_\cC}$ is characterized by the closed condition \eqref{eq:h-constraint}, one for each $\alpha \in \cC$. By \cref{lem:cutting_fibers}, the fiber of $\mathrm{cut}_\cC$ is isomorphic to the product $\prod_{\alpha \in \cC} G_{v_\alpha}$. 

\begin{figure}[ht]
    \centering
\begin{tikzpicture}[scale=0.9]
\draw(0,1.5) arc(30:330:4cm and 2.5cm) coordinate(w);
\fill[gray!20] (0,1.5) arc(90:-270:0.3cm and 1.25cm);
\draw[thick] (0,1.5) arc(90:-270:0.3cm and 1.25cm);
\fill(0,1.5) circle(2pt) node[above=0.3em,scale=0.9]{$m$};
\pic at (-4,1) {handle};
\draw(-0.3,0) coordinate(y);
\draw[red,-<-] (y) ..controls ++(-150:1) and (-6,-1).. (-6,1) ..controls (-6,3) and ($(y)+(150:1)$)..node[pos=0.5,above]{$\beta$} (y);
\draw[red,->-={0.6}{}] (y) ..controls ++(-100:1) and (-7,-1.5)..node[pos=0.7,below]{$\alpha=\alpha_0$} (-7.45,0.1);
\draw[red,dashed] (-7.45,0.1) ..controls (-7,0.5) .. (-4,0.85);
\draw[red] (-4,0.85) --(y);
\fill(y) circle(2pt);
\draw(y)++(0,0.2) node[above left,scale=0.9]{$m_0$};
\draw(y)++(-0.25,1.2) node[scale=0.9]{$E$};
\draw(y)++(0.9,0.5) node[scale=0.9]{$E_0$};
\node at (-4,-2.8) {$\cC=\{\alpha,\beta\}$};
\draw[->,thick] (1.5,0) --node[midway,above]{$\mathrm{cut}_\cC$} (2.5,0);

\begin{scope}[xshift=4cm,yshift=-0.5cm]
\draw[red,-<-={0.1}{},-<-={0.3}{},-<-={0.7}{},-<-={0.9}{}](0,0) coordinate(A) -- (0,1.5) coordinate(B) -- (2.25,3)  coordinate(C) -- (4.5,1.5) coordinate(E) -- (4.5,0) coordinate(F);
\draw(2.25,-1.5) coordinate(G);
\draw[-<-={0.25}{},-<-={0.75}{},thick](F) -- (G) -- (A);
\foreach \i in {A,B,C,E,F,G} \fill(\i) circle(1.5pt);
\node[above=0.2em] at (C) {$\rho(\beta\alpha).\flA_{m_0}$};
\node[right=0.2em,anchor=west] at (E) {$\rho(\alpha).\flA_{m_0}$};
\node[below=0.2em] at (F) {$\flA_{m_0}$};
\node[below=0.2em] at (G) {$\flA_m$};
\draw($(A)!0.5!(G)$) node[below=0.3em]{$w_{E}$};
\draw($(F)!0.5!(G)$) node[below=0.3em]{$w_{E_0}$};
{\color{red}
\draw($(F)!0.5!(E)$) node[right]{$v_\alpha$} node[left,anchor=east]{$\alpha'_0$};
\draw($(E)!0.5!(C)$) node[above]{$v_\beta$};
\draw($(C)!0.5!(B)$) node[above]{$v_\alpha^{-1}$};
\draw($(B)!0.5!(A)$) node[left]{$v_\beta^{-1}$};
}
\end{scope}
\end{tikzpicture}
    \caption{The cutting map $\mathrm{cut}_\cC$. In this example, we get two braid words $\beta(\bw,\bv_\cC;\alpha'_0)=\bs(v_\beta)\bs(v_\alpha^{-1})\bs(v_\beta^{-1})\bs(w_E)\bs(w_{E_0})$ and $\beta(\bw,\bv_\cC;E_0)=\bs(v_\alpha)\bs(v_\beta)\bs(v_\alpha^{-1})\bs(v_\beta^{-1})\bs(w_E)$, where $\bs(w)$ for $w \in W$ is an arbitrary reduced word.}
    \label{fig:surface_braid}
\end{figure}

Choose arbitrary reduced words of $\bw$ and $\bv_\cC$. We further choose a side $\alpha'_0$ of the polygon $\Pi$ which comes from a curve $\alpha_0 \in \cC$. Then we get a braid word $\beta(\bw,\bv_\cC;\alpha'_0)$ by reading off the reduced words assigned to the sides of $\Pi$ except for $\alpha'_0$ along the boundary orientation of $\Pi$. 

Then we get an embedding 
\begin{align}
    &\mathrm{Im}(\mathrm{cut}_\cC) \hookrightarrow \overline{X}(\beta(\bw,\bv_\cC;\alpha'_0)) \times H^{2g+b-1+|\bM|-1}.
\end{align}
From this embedding,
we can compute 
\begin{align}
    \dim \A_{G,\Sigma}^\bw[\cC;v_\cC] &\leq  \sum_{\alpha \in \cC} \dim G_{v_\alpha}+ (2g+b-1+|\bM|-1)\rank G + \dim \overline{X}(\beta(\bw,\bv_\cC;\alpha'_0)).\label{eq:dimension_0} 
\end{align}
In particular, when $w_{\alpha'_0}=w_0=\delta(\beta(\bw,\bv_\cC;\alpha'_0))$, we get an isomorphism 
\begin{align}
    &\mathrm{Im}(\mathrm{cut}_\cC) \xrightarrow{\sim} {X}(\beta(\bw,\bv_\cC;\alpha'_0)) \times H^{2g+b-1+|\bM|-1}.
\end{align} 
In this case, we get 
\begin{align}
    \dim \A_{G,\Sigma}^\bw[\cC;v_\cC] &=  \sum_{\alpha \in \cC} \dim G_{v_\alpha}+ (2g+b-1+|\bM|-1)\rank G + \dim \overline{X}(\beta(\bw,\bv_\cC;\alpha'_0)).\label{eq:dimension_1} 
\end{align}

Let $\bv_0:=\{w_0\}_{\alpha \in \cC} \in W^\cC$. We have  the following Lemma.

\begin{lem}\label{lem:dimension_general}
\begin{enumerate}
    \item Fixing $\bv_\cC=\bv_0$ to be the longest, we get
    \begin{align*}
        \dim \A_{G,\Sigma}^\times[\cC;\bv_0] - \dim \A_{G,\Sigma}^\bw[\cC;\bv_0] &=  \dim X(\beta(\bw_0,\bv_0;\alpha'_0)) -\dim X(\beta(\bw,\bv_0;\alpha'_0)) \\
        &= \sum_{E \in \bB(\Sigma)} (l(w_0)- l(w_E)).
    \end{align*}
    In particular, the subspace $\A_{G,\Sigma}^\times[\cC;\bv_0] \subset \A_{G,\Sigma}[\cC;\bv_0]$ is open dense. 
    \item Fixing any distance $\bw \in W^{\bB(\Sigma)}$, we get
    \begin{align*}
         \dim \A_{G,\Sigma}^{\bw}[\cC;\bv_0] - \dim \A_{G,\Sigma}^{\bw}[\cC;\bv_\cC] \geq 0
    \end{align*}
    Therefore the subspace $\A_{G,\Sigma}^\bw[\cC;\bv_0] \subset \A_{G,\Sigma}^\bw$ has the maximal dimension.
\end{enumerate}
\end{lem}

\begin{proof}
(1): The first equality follows from \eqref{eq:dimension_1}. Since the $w$-distances on the boundary intervals are set to be the longest, we have $\delta(\beta(\bw,\bv_\cC;\alpha'_0))=w_0$. Then the second equality follows from \eqref{eq:dim_braid_variety}. 

(2): 
We apply \eqref{eq:dimension_1} to compute $\dim \A_{G,\Sigma}^{\bw}[\cC;\bv_0]$ and \eqref{eq:dimension_0} to $\dim \A_{G,\Sigma}^{\bw}[\cC;\bv_c]$. By \eqref{eq:dim_braid_variety} and \eqref{eq:dim_braid_1}, we get
\begin{align*}
    \dim &\A_{G,\Sigma}^{\bw}[\cC;\bv_0] - \dim \A_{G,\Sigma}^{\bw}[\cC;\bv_\cC] \geq  \dim X(\beta(\bw,\bv_0;\alpha_0')) - \dim \overline{X}(\beta(\bw,\bv_\cC;\alpha_0'))-\sum_{\alpha \in \cC} \dim G_{v_\alpha} 
    \\
    =& |\beta(\bw,\bv_0;\alpha_0')|-|\beta(\bw,\bv_\cC;\alpha_0')|-\sum_{\alpha \in \cC} (l(w_0)-l(v_\alpha)). 
\end{align*}
Then the claim follows from 
\[|\beta(\bw,\bv_0;\alpha_0')|-|\beta(\bw,\bv_\cC;\alpha_0')|=(l(w_0)-l(v_{\alpha_0}))+2\sum_{\alpha \in \cC\setminus\{\alpha_0\}} (l(w_0)-l(v_\alpha)),\] where notice that the pair $\{v_\alpha,v_\alpha^{-1}\}$ necessarily appears along the braid word $\beta(\bw,\bv_\cC;E_0)$. 
\end{proof}
Now let us complete the proof of \cref{lem:codim_boundary}. By \cref{lem:dimension_general} (2), it suffices to compute the codimension
\begin{align*}
    \dim\A_{G,\Sigma} -\dim\A_{G,\Sigma}^\bw = \dim\A_{G,\Sigma}[\cC;\bv_0] -\dim\A_{G,\Sigma}^\bw[\cC;\bv_0].
\end{align*}
Then by \cref{lem:dimension_general} (1), it can be computed as
\begin{align*}
    \dim\A_{G,\Sigma} -\dim\A_{G,\Sigma}^\bw 
    &= \dim\A_{G,\Sigma}[\cC;\bv_0] -\dim\A_{G,\Sigma}^\bw[\cC;\bv_0] \\
    &= \dim\A_{G,\Sigma}^\times[\cC;\bv_0] -\dim\A_{G,\Sigma}^\bw[\cC;\bv_0] 
    = \sum_{E \in \bB(\Sigma)} (l(w_0) - l(w_E)),
\end{align*}
as desired.

\bibliographystyle{alpha}

\end{document}